\newenvironment{mpmatrix}{\begin{medsize}\begin{pmatrix}}%
    {\end{pmatrix}\end{medsize}}%
\DeclareMathOperator{\Span}{span}
\DeclareMathOperator{\Rank}{rank}
\def\widebreve{\mathpalette\wide@breve}
\def\wide@breve#1#2{\sbox\z@{$#1#2$}%
     \mathop{\vbox{\m@th\ialign{##\crcr
\kern0.08em\brevefill#1{0.8\wd\z@}\crcr\noalign{\nointerlineskip}%
                    $\hss#1#2\hss$\crcr}}}\limits}
\def\brevefill#1#2{$\m@th\sbox\tw@{$#1($}%
  \hss\resizebox{#2}{\wd\tw@}{\rotatebox[origin=c]{90}{\upshape(}}\hss$}
\newcommand{\RR}{\mathbb R}
\newcommand{\NN}{\mathbb N}
\newcommand{\ZZ}{\mathbb Z}
\newcommand{\CC}{\mathbb C}
\newcommand{\Mod}[1]{\ (\mathrm{mod}\ #1)}
\newcommand{\cB}{\mathcal B}
\newcommand{\cA}{\mathcal A}
\newcommand{\cZ}{\mathcal Z}
\newcommand{\cC}{\mathcal C}
\newcommand{\benu}{\begin{enumerate}}
\newcommand{\eenu}{\end{enumerate}}
\newcommand{\bop}{\begin{opomba}}
\newcommand{\eop}{\end{opomba}}
\newcommand{\supp}{\mathrm{supp}}
\newtheorem{theorem}{Theorem}[section]
\newtheorem{corollary}[theorem]{Corollary}
\newtheorem{lemma}[theorem]{Lemma}
\newtheorem{proposition}[theorem]{Proposition}
\theoremstyle{definition}
\newtheorem{example}[theorem]{Example}
\newcommand{\mbf}{\mathbf}
\theoremstyle{remark}
\newtheorem{remark}[theorem]{Remark}
\numberwithin{equation}{section}
\begin{document}

\title{The truncated Hamburger moment problems with gaps in the index set}

\author[Alja\v z Zalar]{Alja\v z Zalar}

\address{%
Faculty of Computer and Information Science\\
University of Ljubljana\\
Ve\v cna pot 113\\
1000 Ljubljana\\
Slovenia}

\email{aljaz.zalar@fri.uni-lj.si}

\thanks{Supported by the Slovenian Research Agency grants J1-2453, P1-0288.}

\subjclass[2010]{Primary 47A57, 47A20, 44A60; Secondary 
15A04, 47N40.}

\keywords{Hamburger moment problem, truncated moment problems, representing measure, moment matrix}
\date{July 21, 2020}
\maketitle

\begin{abstract}

	In this article we solve four special cases of the truncated Hamburger moment problem (THMP) of degree $2k$ with one or two missing
	moments in the sequence. As corollaries we obtain, by using appropriate substitutions, 
	the solutions to bivariate truncated moment problems of degree $2k$
	for special curves. Namely, for the curves $y=x^3$ (first solved by Fialkow \cite{Fia11}), 
	$y^2=x^3$, $y=x^4$ where a certain moment of degree $2k+1$ is known and $y^3=x^4$ with a certain moment given.  
	The main technique is the completion of the partial positive semidefinite matrix (ppsd) such that the conditions of Curto and Fialkow's solution of 
	the THMP are satisfied. The main tools are the use of the properties of positive semidefinite Hankel matrices and
	a result on all completions of a ppsd matrix with one unknown entry, proved by the use of the Schur complements for
	$2\times 2$ and $3\times 3$ block matrices.

\end{abstract}

\section{Introduction}

For $x=(x_1,\ldots,x_d)\in \RR^d$ and $i=(i_1,\ldots,i_d)\in \ZZ^d_+$, we set $|i|=i_1+\ldots+i_d$ and $x^i=x_1^{i_1}\cdots x_d^{i_d}$.
Given a real $d$-dimensional multisequence $\beta=\beta^{ (2k)}=\{\beta_i\}_{i\in \ZZ_+^d,|i|\leq 2k}$ of degree $2k$
and a closed subset $K$ of $\RR^d$, the \textbf{truncated moment problem (TMP)} supported on $K$ for $\beta$
asks to characterize the existence of a positive Borel measure $\mu$ on $\RR$ with support in $K$, such that
	\begin{equation}\label{moment-measure-cond}
		\beta_i=\int_{K}x^id\mu(x)\quad \text{for}\quad i\in \ZZ^d_+, |i|\leq 2k.
	\end{equation}
If such measure exists, we say that $\beta$ has a representing measure supported on $K$ and $\mu$ is its $K$-\textbf{representing measure}.

We denote by $M(k)=M(k)(\beta)=(\beta_{i,j})_{i,j=0}^{k}$ the moment matrix associated with $\beta$, where the rows and columns are indexed by $X^i$, $|i|\leq k$, in degree-lexicographic order.
Let $\RR[x]_k:=\{p\in \RR[x]\colon \deg p\leq k\}$ stand for the set of polynomials in $d$ variables
of degree at most $k$.
To every $p:=\sum_{i\in \ZZ^d_+, |i|\leq k} a_ix^i\in \RR[x]_k$, we denote by $p(X)=\sum_{i\in \ZZ^d_+, |i|\leq k} a_iX^i$ the vector from the column
space $\cC(M(k))$ of the matrix $M(k)$. 
Recall from \cite{CF96}, that $\beta$ has a representing measure $\mu$ with the support $\supp \;\mu$ being a 
subset of 
$\cZ_p:=\{x\in \RR^d\colon p(x)=0\}$ if and only if $p(X)=0$.
We say that the matrix $M(k)$ is \textbf{recursively generated (rg)} if for $p,q,pq\in \RR[x]_k$ such that $p(X)=0$, 
it follows that $(pq)(X)=0.$

%
%
The full moment problem (MP), where $\beta_i$ is given for every $i\in \ZZ^d_+$, being the classical question in analysis and also due to its relation
with real algebraic geometry via the duality with positive polynomials given by Haviland's theorem \cite{Hav35}, 
has been widely studied, see e.g., 
	\cite{Akh65,AhK62,KN77,Las09,Lau05,Lau09,Mar08,PS06,PS08,Put93,PV99,Sch91,Sch03,Sch17}.
The TMP, which is more general than the full MP \cite{Sto01}, has been intensively studied in a series of papers by Curto and Fialkow \cite{CF91, CF96, CF98a, CF98b,CF02,CF04,CF05,CF08} with the celeberated flat extension theorem they established as a core tool in the field.
There are also various generalizations of the TMP (e.g., \cite{Dym89,Bol96,BW06,DU18}, to matrix moments, \cite{BK10,BK112} to tracial moments, \cite{IKLS17} to infinitely many variables). Recently, Fialkow's core variety \cite{Fia17} approach led to many new results on the TMP;
see also \cite{BF20,DS18}. 
A \textbf{concrete solution} to the TMP is a set of necessary and sufficient conditions for the existence of a $K$-representing measure. Among necessary conditions, $M(k)$ must be psd and rg \cite{CF91,CF98b}, which also suffice in some cases. Concrete solutions to the TMP are known in the following cases:
\begin{enumerate}
  \item\label{THMP-pt1} (Truncated Hamburger moment problem (THMP)) $d=1$ and $K=\RR$. 
	See \cite[Theorem I.3]{AhK62} 	or \cite[Theorem A.II.1]{Ioh82} for the special case of even $k$
	with an invertible moment matrix and \cite[Section 3]{CF91} for the general case.
  \item\label{pt2-13-07} (Truncated Hausdorff moment problem) $d=1$ and $K=[0,\infty)$.
	See \cite[p.\ 175]{KN77}  for the special case of an invertible moment matrix and \cite[Section 5]{CF91} for the general case.
  \item\label{pt3-13-07} (Truncated Stieltjes moment problem) $d=1$ and $K=[a,b]$, $a<b$. 
	See \cite[Theorems III.2.4 and II.2.3]{KN77} and \cite[Section 4]{CF91}  for the general case. 
  \item\label{quartic-13-07} $d=2$ and $K$ is a curve $p(x,y)=0$ with $\deg p\leq 2$. See \cite{CF02, CF04, CF05,FN10,Fia14,CS16}.
  \item\label{case-y-x3} $d=2$ and $K$ is a curve $y=x^3$. See \cite{Fia11}.
  \item\label{rec-deter-13-07} $d=2$ and the moment matrix has a special feature called \textit{recursive determinateness}. 
	See \cite{CF13} for details.
  \item (Extremal case) The rank of the moment matrix is the same as the cardinality of the corresponding variety;
	see \cite{CFM08}.
  \item\label{THMP-pt2} Some special cases are solved in \cite{CS15,Fia17,Ble15,BF20}.
\end{enumerate}

In \eqref{case-y-x3}, $\beta$ must satisfy certain numerical conditions, which are equivalent to the conditions from
Corollary \ref{Y=X3-gen-bg} below. The proof is by separating the nonsingular case from the singular one. 
In the nonsingular case the existence of a flat extension is established by a detailed and technically demanding analysis,
while the singular case is done by the use of additional features of the moment matrix such as recursive determinateness and known results for such matrices.

In this article we present concrete solutions to the four cases of the THMP of degree $2k$ with some unknown moments $\beta_{i_1},\ldots,\beta_{i_j}$, $1\leq i_1\leq\cdots\leq i_j\leq 2k-1$, in the sequence, which we 
call the \textbf{THMP with gaps ($\beta_{i_1},\ldots,\beta_{i_j}$)}.
Namely, we solve the THMP with gaps $(\beta_{2k-1})$, $(\beta_{2k-2},\beta_{2k-1})$, $(\beta_1)$ and 
$(\beta_1,\beta_2)$. The motivation to solve this cases of the THMP with gaps is to obtain the solutions to the special cases of the 2-dimensional TMP.
Namely, the solution of the THMP with gaps:
\begin{enumerate} 
  \item $(\beta_{2k-1})$ gives an alternative solution to the TMP with $d=2$ and $K$ being the curve $y=x^3$ (see \eqref{case-y-x3} above). 
	The advantage of our approach is that the proof is short and we also do not need to separate three subcases, i.e., $k=1$, $k=2$ and $k\geq 3$. 
  \item $(\beta_{2k-2},\beta_{2k-1})$ solves the TMP with $d=2$, $K$ being the curve $y=x^4$ and in addition the 	
	moment $\beta_{3,2k-2}$ of degree $2k+1$ is known. To solve the TMP for the curve $y=x^4$ without this
	additional moment, one needs to solve the THMP with gaps $(\beta_{2k-5}, \beta_{2k-2},\beta_{2k-1})$ 
	which is a possible topic of future research. 
  \item $(\beta_1)$ solves the TMP with $d=2$ and $K$ being the curve $y^2=x^3$. 
  \item $(\beta_1,\beta_2)$ solves the TMP with $d=2$, $K$ being the curve $y^3=x^4$ and known 
	$\beta_{\frac{5}{3},0}$. By $\beta_{\frac{5}{3},0}$ we mean the moment of  
	$x_1^{\frac{5}{3}}$, i.e., $\int_K x_1^{\frac{5}{3}}d\mu$. To solve the TMP for the curve $y^3=x^4$ without
	this additional information, one needs to solve the THMP with gaps $(\beta_{1}, \beta_{2},\beta_{5})$, which 
	is another open question for future research.
\end{enumerate}

\subsection{ Readers Guide}
The paper is organized as follows. 
In Section \ref{S2} we present the tools used in the proofs of our main results:
\begin{itemize}
	\item Generalized Schur complements and verification of positive semidefiniteness of block matrices 
		(Subsection \ref{SubS2.1}). 
	\item Properties of psd Hankel matrices (Subsection \ref{SubS2.2}).
	\item The solution to the THMP (Subsection \ref{SubS2.3}).
	\item A result about psd completions of partial psd matrices 
			with one unknown entry (Subsetion \ref{SubS2.4}).
	\item An extension principle for psd matrices (Subsection \ref{SubS2.5}).
	\item A result about subsequences of moment sequences (Subsection \ref{SubS2.6}).
\end{itemize}
In Section \ref{S3} we solve the THMP of degree $2k$ with gaps $(\beta_{2k-1})$ 
	(see Theorem \ref{trunc-Hamb-without-3n-1-bg})
and $(\beta_{2k-2},\beta_{2k-1})$ 
	(see Theorem \ref{trunc-Hamb-without-3n-2-and-1}). 
Corollary \ref{Y=X3-gen-bg}, being a special case of the $(\beta_{2k-1})$-case, 
is the solution to the TMP with $d=2$ and the curve $y=x^3$ as $K$,
while Corollary \ref{Y=X4-gen}, being a special case of the $(\beta_{2k-2},\beta_{2k-1})$-case, 
is the solution to the TMP with $d=2$, the curve $y=x^4$ as $K$ and an additional moment $\beta_{3,2k-2}$ known.

In Section \ref{S4} we solve the THMP of degree $2k$ with gaps $(\beta_{1})$ 
	(see Theorem \ref{trunc-Hamb-without-1})
and $(\beta_{1},\beta_{2})$ 
	(see Theorem \ref{trunc-Hamb-without-1-2}). 
Corollary \ref{Y2=X3-general}, being a special case of the $(\beta_{1})$-case, 
is the solution to the TMP with $d=2$ and the curve $y^2=x^3$ as $K$,
while Corollary \ref{Y3=X4-gen}, being a special case of the $(\beta_{1},\beta_{2})$-case, 
is the solution to the TMP with $d=2$, the curve $y^3=x^4$ as $K$ and an additional moment $\beta_{\frac{5}{3},0}$ known.\\

\noindent \textbf{Acknowledgement}. I would like to thank Jaka Cimpri\v c and Abhishek Bhardwaj for 
useful suggestions on the preliminary versions of this article.

\section{Preliminaries}\label{S2}

In this section we present some tools which will be needed in the proofs of our main results in
Sections \ref{S3} and \ref{S4}.\\

We write $M_{n,m}$ (resp.\ $M_n$) for the set of $n\times m$ (resp.\ $n\times n$) real matrices. 
For a matrix $M$ we denote by $\cC(M)$ its column space.
The set of real symmetric matrices of size $n$ will be denoted by $S_n$. 
For a matrix $A\in S_n$ the notation $A\succ 0$ (resp.\ $A\succeq 0$) means $A$ is positive definite (pd) (resp.\ positive semidefinite (psd)).

\subsection{Generalized Schur complements}\label{SubS2.1}
Let 
	\begin{equation}\label{matrixM}
		M=\left( \begin{array}{cc} A & B \\ C & D \end{array}\right)\in S_{n+m}
	\end{equation}
be a real matrix where $A\in M_n$, $B\in M_{n,m}$, $C\in M_{m,n}$  and $D\in M_{m}$.
The \textbf{generalized Schur complement} \cite{Zha05} of $A$ (resp.\ $D$) in $M$ is defined by
	$$M/A=D-CA^+B\quad(\text{resp.}\; M/D=A-BD^+C),$$
where $A^+$ (resp.\ $D^+$) stands for the Moore-Penrose inverse of $A$ (resp.\ $D$). 

\begin{remark}\label{rem-10-07-20}
	\begin{enumerate}
	  \item If $A$ (resp.\ $D$) is invertible, then $M/A$ (resp.\ $M/D$) is the usual Schur complement of $A$ (resp.\ $D)$ in $M$.
	  \item \label{SC-remark} Note that $M/A=\left( \begin{array}{cc} D & C \\ B & A\end{array}\right)/A$.
	\end{enumerate}
\end{remark}

The following theorem gives conditions for verifying positive semidefiniteness of a block matrix of size 2.

\begin{theorem}\label{block-psd} \cite{Alb69} 
	Let 
		\begin{equation}\label{form-of-M}
			M=\left( \begin{array}{cc} A & B \\ B^{T} & C\end{array}\right)\in S_{n+m}
		\end{equation} 
	be a real symmetric matrix where $A\in S_n$, $B\in M_{n,m}$ and $C\in S_m$.
	Then the following conditions are equivalent:
	\begin{enumerate}
	  \item $M\succeq 0$ .
	  \item $C\succeq 0$, $\cC(B^T)\subseteq\cC(C)$ and $M/C\succeq 0$.
	  \item $A\succeq 0$, $\cC(B)\subseteq\cC(A)$ and $M/A\succeq 0$.
	\end{enumerate}
\end{theorem}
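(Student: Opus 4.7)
The plan is to pivot the whole argument on a single congruence identity: under the range condition $\cC(B)\subseteq\cC(A)$, one has
\[
  \begin{pmatrix} I & 0 \\ -B^{T}A^{+} & I \end{pmatrix}
  M
  \begin{pmatrix} I & -A^{+}B \\ 0 & I \end{pmatrix}
  = \begin{pmatrix} A & 0 \\ 0 & M/A \end{pmatrix},
\]
with the outer triangular matrices being invertible. Since congruence preserves positive semidefiniteness, this reduces (1)$\Leftrightarrow$(3) to establishing the range condition and then reading off that $M\succeq 0$ iff $A\succeq 0$ and $M/A\succeq 0$. The key algebraic input, which I would isolate first, is that for symmetric $A$ the product $AA^{+}$ equals $A^{+}A$ and is the orthogonal projection onto $\cC(A)$; in particular $\cC(B)\subseteq\cC(A)$ is equivalent to $AA^{+}B=B$ and, by transposing, to $B^{T}A^{+}A=B^{T}$. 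These two identities are exactly what zero out the off-diagonal blocks in the congruence above.

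For the direction (1)$\Rightarrow$(3), restricting $M\succeq 0$ to principal submatrices gives $A\succeq 0$ and $C\succeq 0$. To obtain the range condition, I would take $v\in\ker A$ and test $M$ against $x=(v,0)^{T}$: the computation $x^{T}Mx=v^{T}Av=0$ combined with $M\succeq 0$ forces $Mx=0$, whose lower block reads $B^{T}v=0$. Hence $\ker A\subseteq\ker B^{T}$, and taking orthogonal complements (using $\cC(A^{T})=\cC(A)$) yields $\cC(B)\subseteq\cC(A)$. The congruence identity then applies, and $M/A\succeq 0$ follows from $M\succeq 0$. For the reverse (3)$\Rightarrow$(1), the same identity, now justified by the hypothesis, exhibits $M$ as congruent to $\mathrm{diag}(A,M/A)$, which is psd by assumption.

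Finally, (1)$\Leftrightarrow$(2) follows from (1)$\Leftrightarrow$(3) applied to the permuted matrix $\widetilde{M}=\left(\begin{smallmatrix} C & B^{T} \\ B & A \end{smallmatrix}\right)$, which is psd iff $M$ is (permutation similarity is an orthogonal congruence) and satisfies $\widetilde{M}/C=M/C$ by Remark \ref{rem-10-07-20}. The only real bookkeeping obstacle is the careful use of Moore-Penrose identities, in particular the symmetry of $A^{+}$ for symmetric $A$ and the sidedness of $AA^{+}B=B$ versus $B^{T}A^{+}A=B^{T}$; once the range condition has been encoded as $AA^{+}B=B$, verifying the displayed factorization reduces to a direct $2\times 2$ block multiplication.
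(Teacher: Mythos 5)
The paper does not actually prove this theorem; it is quoted verbatim from Albert \cite{Alb69}, so there is no internal proof to compare against. Your argument is correct and is the standard one: the unit-triangular congruence
\begin{equation*}
  \left(\begin{array}{cc} I & 0 \\ -B^{T}A^{+} & I \end{array}\right)
  M
  \left(\begin{array}{cc} I & -A^{+}B \\ 0 & I \end{array}\right)
  = \left(\begin{array}{cc} A & 0 \\ 0 & M/A \end{array}\right)
\end{equation*}
does hold once $\cC(B)\subseteq\cC(A)$, since for symmetric $A$ the matrix $AA^{+}=A^{+}A$ is the orthogonal projector onto $\cC(A)$, so $AA^{+}B=B$ and $B^{T}A^{+}A=B^{T}$ kill the off-diagonal blocks; and your derivation of the range condition from $M\succeq 0$ (testing against $(v,0)^{T}$ for $v\in\ker A$, using that $x^{T}Mx=0$ forces $Mx=0$ for psd $M$, then passing to orthogonal complements via $\cC(A^{T})=\cC(A)$) is sound. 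It is worth noting that this kernel-inclusion step is essentially the same device the paper isolates later as its ``extension principle'' (Lemma \ref{extension-principle}), and the reduction of (2) to (3) via the block-swap permutation matches Remark \ref{rem-10-07-20}. Your proof is a legitimate self-contained replacement for the citation.
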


If $m=1$ in \eqref{form-of-M}, then $\Rank M\in \{\Rank A,\Rank A+1\}$.
The following proposition characterizes w.r.t.\ the value of $M/A$ when each of the possibilities occurs in the case $M$ is psd.

\begin{proposition}\label{rank-13-07}
	Let 
		\begin{equation*}
			M=\left( \begin{array}{cc} A & b \\ b^{T} & c\end{array}\right)\in S_{n+1}
		\end{equation*} 
	be a real symmetric matrix where $A\in S_n$, $b\in \RR^n$ and $c\in \RR$.
	Then $\Rank M=\Rank A$ if and only if $M/A=0$. Otherwise $\Rank M=\Rank A+1$.
\end{proposition}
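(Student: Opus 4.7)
The plan is to use Theorem \ref{block-psd} (which handles the psd hypothesis on $M$ implicit in the surrounding discussion) to convert $M$ into a block-diagonal form via a congruence transformation, after which the rank can be read off directly. Since $M \succeq 0$, Theorem \ref{block-psd} yields $A \succeq 0$, $\cC(b) \subseteq \cC(A)$, and $M/A \geq 0$ (as a scalar). Using $\cC(b) \subseteq \cC(A)$, pick $v \in \RR^n$ with $b = Av$. Because $AA^+A = A$, we have $b^T A^+ b = v^T A A^+ A v = v^T A v = b^T v$, hence
$$M/A = c - b^T A^+ b = c - b^T v.$$

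Next, set $T := \begin{pmatrix} I_n & -v \\ 0 & 1 \end{pmatrix}$, which is invertible with inverse $\begin{pmatrix} I_n & v \\ 0 & 1 \end{pmatrix}$. A direct block multiplication, using $b = Av$ and $A = A^T$ to kill the off-diagonal blocks, yields
$$T^T M T = \begin{pmatrix} A & 0 \\ 0 & M/A \end{pmatrix}.$$
Because congruence by an invertible matrix preserves rank, this gives $\Rank M = \Rank A + \Rank(M/A)$. Since $M/A$ is a real number, $\Rank(M/A) = 0$ iff $M/A = 0$ and equals $1$ otherwise, yielding exactly the claimed dichotomy.

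There is no real obstacle; the only mildly delicate point is that $v$ with $b = Av$ need not be unique when $A$ is singular, but if $b = Av = Av'$ then $b^T v = v'^T A v = b^T v'$, so the scalar $b^T v$ — and hence the block-diagonal form — is well-defined and matches the Moore–Penrose expression for $M/A$.
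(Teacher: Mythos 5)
Your proof is correct and follows essentially the same route as the paper: both use Theorem \ref{block-psd} to get $b\in\cC(A)$ from the (implicit) psd hypothesis, then eliminate the last column via a solution of $Az=b$ so that $M/A$ appears in the corner and the rank can be read off. The only cosmetic difference is that you perform the elimination as a two-sided congruence $T^TMT=A\oplus(M/A)$, which makes the rank additivity immediate, whereas the paper uses a one-sided column operation and argues via equality of column spaces.
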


\begin{proof}
	By Theorem \ref{block-psd}, the psd assumption implies that $b\in \cC(A)$.
	By the properties of the Moore-Penrose inverse 	$\{A^+b+w\colon w\in \ker A\}$ is the set of solutions $z$ of the system $Az=b$.
	Therefore, 
	\begin{equation}\label{col-space-M}
		\cC(M)=
		\cC\big(\left(\begin{array}{cc}
			A & 0 \\
			b^T & c-b^T(A^+b+w)
		\end{array}\right)\big)=
	\cC\big(\left(\begin{array}{cc}
			A & 0 \\
			b^T & M/A
		\end{array}\right)\big),
	\end{equation}
	where the second equality follows from the fact that $A$ is symmetric, 
	$b\in \cC(A)$ and $w\in \ker A$.
	Now, the statement of the proposition follows from \eqref{col-space-M}.
\end{proof}

The following proposition gives an explicit formula, called the \textit{quotient formula} \cite{CH69},  
for expressing the Schur complement of a $2\times 2$ upper left-hand or a $2\times 2$ lower right-hand block in 
a $3\times 3$ block matrix using $2\times 2$ block submatrices.

\begin{proposition}\label{Schur3by3-ver2}
	Let 
	\begin{equation*}\label{matrixK}
		K
		=\left( \begin{array}{ccc} A & B & D \\ B^T & C & E \\ D^T & E^T & F\end{array}\right)
		=\left( \begin{array}{c|c} M & \begin{array}{c} D\\ E \end{array} \\ \hline 
			\begin{array}{cc} D^T & E^T\end{array} & F \end{array}\right)
		=\left( \begin{array}{c|c} A & \begin{array}{cc} B & D \end{array} \\ \hline 
			\begin{array}{c} B^T \\ D^T\end{array} & N \end{array}\right) \in S_{n_1+n_2+n_3}
	\end{equation*}
	be a $3\times 3$ block real matrix, where $A\in S_{n_1}, C\in S_{n_2}, F\in S_{n_3}$ are real symmetric matrices and 
	$B\in M_{n_1,n_2},D_{n_1,n_3},E_{n_2,n_3}$ are rectangular matrices.
	If $M$ and $A$ are nonsingular, then
		\begin{equation}\label{qf1}
			K/M=\left(\begin{array}{cc} A & D \\ D^T & F \end{array}\right)/A
					- \left[\left(\begin{array}{cc} A & B \\ D^T & E^T \end{array}\right)\Big/A\right]
						(M/A)^{-1}
						\left[\left(\begin{array}{cc} A & D \\ B^T & E \end{array}\right)\Big/A\right].
		\end{equation}
	If $N$ and $C$ are nonsingular, then
		\begin{equation}\label{qf2}
			K/N=
				 \left(\begin{array}{cc} C & B^T \\ B & A \end{array}\right)\Big/C 
					- \left[\left(\begin{array}{cc} C & E \\ B & D \end{array}\right)\Big/C\right]
						(N/C)^{-1}
						\left[\left(\begin{array}{cc} C & B^T \\ E^T & D^T \end{array}\right)\Big/C\right].
		\end{equation}
\end{proposition}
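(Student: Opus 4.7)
The strategy is to reduce both \eqref{qf1} and \eqref{qf2} to the Crabtree--Haynsworth \emph{quotient identity}
\begin{equation*}
K/M = (K/A)/(M/A),
\end{equation*}
which holds whenever $A$ is a nonsingular principal submatrix of a nonsingular principal submatrix $M$ of $K$ (this is precisely the content of \cite{CH69}). Granting the identity, both formulas will follow once the intermediate Schur complement $K/A$ (respectively $K/C$) is written as a $2\times 2$ block matrix whose four blocks are identified with the Schur complements appearing on the right-hand side.

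For \eqref{qf1}, I would view $K$ as a $2\times 2$ block matrix with top-left block $A$, that is,
\begin{equation*}
K = \left(\begin{array}{cc} A & \widetilde B \\ \widetilde B^T & \widetilde C \end{array}\right), \qquad \widetilde B = \left(\begin{array}{cc} B & D \end{array}\right), \qquad \widetilde C = \left(\begin{array}{cc} C & E \\ E^T & F \end{array}\right),
\end{equation*}
and compute directly
\begin{equation*}
K/A = \widetilde C - \widetilde B^T A^{-1} \widetilde B = \left(\begin{array}{cc} C - B^T A^{-1} B & E - B^T A^{-1} D \\ E^T - D^T A^{-1} B & F - D^T A^{-1} D \end{array}\right).
\end{equation*}
Each of these four blocks is itself a $2\times 2$ Schur complement of $A$; they are, in order, $M/A$, $\left(\begin{smallmatrix} A & D \\ B^T & E \end{smallmatrix}\right)/A$, $\left(\begin{smallmatrix} A & B \\ D^T & E^T \end{smallmatrix}\right)/A$, and $\left(\begin{smallmatrix} A & D \\ D^T & F \end{smallmatrix}\right)/A$. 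Since $M/A$ is nonsingular (both $M$ and $A$ are), taking the ordinary Schur complement of the top-left block $M/A$ of $K/A$ gives exactly the right-hand side of \eqref{qf1}, and the quotient identity equates this with $K/M$.

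For \eqref{qf2} the same scheme applies with $C$ in place of $A$ and $N$ in place of $M$, except that $C$ now sits in the middle of $K$ rather than in a corner. I would invoke Remark \ref{rem-10-07-20}\eqref{SC-remark}, applied to a simultaneous permutation of block rows and columns of $K$, to relocate $C$ to the top-left position; this leaves $K/C$ unchanged. Computing $K/C$ from the reordered matrix yields a $2\times 2$ block matrix whose four blocks are precisely $\left(\begin{smallmatrix} C & B^T \\ B & A \end{smallmatrix}\right)/C$, $\left(\begin{smallmatrix} C & E \\ B & D \end{smallmatrix}\right)/C$, $\left(\begin{smallmatrix} C & B^T \\ E^T & D^T \end{smallmatrix}\right)/C$, and $N/C$. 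Applying the quotient identity $K/N = (K/C)/(N/C)$, and using Remark \ref{rem-10-07-20}\eqref{SC-remark} once more so that $N/C$ (which now appears in the bottom-right of $K/C$) may be treated as if it were in the top-left, reproduces \eqref{qf2}.

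The main obstacle is purely bookkeeping: correctly matching each of the four $2\times 2$ Schur complements with a block of $K/A$ or $K/C$, and properly invoking Remark \ref{rem-10-07-20}\eqref{SC-remark} in the derivation of \eqref{qf2} so that the inner block $C$ may be treated as if it already lay in the top-left corner. No new analytic input is needed beyond the Crabtree--Haynsworth quotient identity, which is used as a black box.
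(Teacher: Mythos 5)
Your proposal is correct and takes essentially the same route as the paper: compute $K/A$ (resp.\ $K/C$ after permuting block rows and columns) explicitly as a $2\times 2$ block matrix whose entries are the four Schur complements on the right-hand side, then apply the Crabtree--Haynsworth quotient identity, with Remark \ref{rem-10-07-20}\eqref{SC-remark} handling the fact that $C$ and $N$ do not sit in a corner. The paper's proof of \eqref{qf2} is just a slightly more compact packaging of your argument (it reorders $K$ into a matrix $L$ with $N$ in the top-left and applies \eqref{qf1} directly), so there is no substantive difference.
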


\begin{proof}
	By an easy calculation we have that
		$$K/A=\left( \begin{array}{cc} M/A & \left(\begin{array}{cc} A & D \\ B^T & E \end{array}\right)/A\\
				 \left(\begin{array}{cc} A & B \\ D^T & E^T \end{array}\right)/A &
				 \left(\begin{array}{cc} A & D \\ D^T & F \end{array}\right)/A 
				\end{array}\right).$$
	Now the quotient formula \cite{CH69} $K/M=(K/A)/(M/A)$ yields \eqref{qf1}. 

	By Remark \ref{rem-10-07-20} \eqref{SC-remark}, it is true that $K/N=L/N$ where 
		$$L=
		\left( \begin{array}{c|c} N & \begin{array}{c} B^T \\ D^T \end{array} \\ \hline 
			\begin{array}{cc} B & D\end{array} & A \end{array}\right).$$
	Now \eqref{qf2} follows from \eqref{qf1}.
\end{proof}					

\subsection{Hankel matrices}\label{SubS2.2}
Let $k\in \NN$.
For 
	\begin{equation*}\label{vector-v}
		\beta=(\beta_0,\ldots,\beta_{2k} )\in \RR^{2k+1},
	\end{equation*}
we denote by
	\begin{equation*}\label{vector-v}
		A_{\beta}:=\left(\beta_{i+j} \right)_{i,j=0}^k
					=\left(\begin{array}{ccccc} 
							\beta_0 & \beta_1 &\beta_2 & \cdots &\beta_k\\
							\beta_1 & \beta_2 & \iddots & \iddots & \beta_{k+1}\\
							\beta_2 & \iddots & \iddots & \iddots & \vdots\\
							\vdots 	& \iddots & \iddots & \iddots & \beta_{2k-1}\\
							\beta_k & \beta_{k+1} & \cdots & \beta_{2k-1} & \beta_{2k}
						\end{array}\right)
					\in S_{k+1}
	\end{equation*}
the corresponding Hankel matrix. We denote by 
	$\mbf{v_j}:=\left( \beta_{j+\ell} \right)_{\ell=0}^k$ the $(j+1)$-th column of $A_{\beta}$, $0\leq j\leq k$, i.e.,
		$$A_{\beta}=\left(\begin{array}{ccc} 
								\mbf{v_0} & \cdots & \mbf{v_k}
							\end{array}\right).$$
As in \cite{CF91}, the \textbf{rank} of $\beta$, denoted by $\Rank \beta$, is defined by
	$$\Rank \beta=
	\left\{\begin{array}{rl} 
		k+1,&	\text{if } A_{\beta} \text{ is nonsingular},\\
		\min\left\{i\colon \bf{v_i}\in \Span\{\bf{v_0},\ldots,\bf{v_{i-1}}\}\right\},&		\text{if } A_{\beta} \text{ is singular}.
	 \end{array}\right.$$
We denote the upper left-hand corner of $A_{\beta}$ of size $m+1$ by 
		$$A_{\beta}(m)=\left(\beta_{i+j} \right)_{i,j=0}^m\in S_{m+1}.$$
The following proposition is the alternative description of $\Rank \beta$ if $A_\beta$ is singular.

\begin{proposition}\label{alternative-rank}\cite[Proposition 2.2]{CF91} 
	 Let $k\in \NN$, 
	$\beta=(\beta_0,\ldots,\beta_{2k})$,
	and assume that $A$ is positive semidefinite and singular.
	Then
		$$\Rank \beta=\min\{j\colon 0\leq j\leq k \text{ such that }A_\beta(j) 		
			\text{ is singular}\}.$$
\end{proposition}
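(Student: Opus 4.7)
The plan is to prove the two inequalities $\Rank \beta \leq s$ and $\Rank \beta \geq s$, where I write $s:=\min\{j\colon 0\leq j\leq k,\ A_\beta(j) \text{ is singular}\}$ and $r:=\Rank\beta$. The only nontrivial ingredient is the standard fact that for a psd matrix $M$, we have $x^{T}Mx=0$ if and only if $Mx=0$; combined with a padding-with-zeros argument this is essentially all that is needed.

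For the inequality $s\leq r$, I would use the definition of $\Rank \beta$ directly. Since $A_\beta$ is singular, $r\leq k$, and there exist scalars $c_0,\ldots,c_{r-1}$ with $\mathbf{v_r}=\sum_{i=0}^{r-1} c_i \mathbf{v_i}$. Truncating this identity to the first $r+1$ entries of each column yields a linear dependence among the columns of $A_\beta(r)$, so $A_\beta(r)$ is singular and therefore $s\leq r$.

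For the reverse inequality $s\geq r$, I would argue by contradiction: suppose $A_\beta(j)$ is singular for some $j<r$. Since $A_\beta\succeq 0$, its principal submatrix $A_\beta(j)$ is also psd; being singular, there exists a nonzero vector $x=(x_0,\ldots,x_j)^{T}\in\RR^{j+1}$ with $A_\beta(j)x=0$, equivalently $x^{T}A_\beta(j)x=0$. Extending $x$ by zeros to $\widetilde{x}=(x_0,\ldots,x_j,0,\ldots,0)^{T}\in\RR^{k+1}$ gives
\begin{equation*}
\widetilde{x}^{T}A_\beta\widetilde{x}=x^{T}A_\beta(j)x=0,
\end{equation*}
and since $A_\beta\succeq 0$ this forces $A_\beta\widetilde{x}=0$, i.e.\ $\sum_{i=0}^{j}x_i \mathbf{v_i}=0$. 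Hence one of $\mathbf{v_0},\ldots,\mathbf{v_j}$ lies in the span of the preceding ones, contradicting the minimality in the definition of $r$ since $j<r$. Therefore $A_\beta(j)$ is nonsingular for every $j<r$, giving $s\geq r$.

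The main (and only) obstacle is the lifting step $A_\beta(j)$ singular $\Rightarrow$ the first $j+1$ columns of the full matrix $A_\beta$ are linearly dependent; this fails in general but holds here crucially because $A_\beta\succeq 0$, so that the quadratic form vanishing on a padded vector forces the padded vector to lie in the kernel of the full matrix. Once this is invoked, both inequalities are immediate from the definitions, and together they give $\Rank\beta=s$, completing the proof.
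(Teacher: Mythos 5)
Your proof is correct and complete. There is nothing internal to compare it against: the paper states Proposition~\ref{alternative-rank} without proof, quoting it from \cite[Proposition 2.2]{CF91}. Both halves of your argument are sound. The inequality $s\le r$ is pure linear algebra: the dependence $\mathbf{v_r}=\sum_{i=0}^{r-1}c_i\mathbf{v_i}$ (which exists because $A_\beta$ is singular, so the set defining $\Rank\beta$ is nonempty) restricts coordinatewise to a dependence among the columns of the leading corner $A_\beta(r)$. The reverse inequality is where positive semidefiniteness is genuinely needed, and your lifting step --- a kernel vector of $A_\beta(j)$ padded with zeros lies in $\ker A_\beta$ --- is precisely the paper's extension principle, Lemma~\ref{extension-principle}, in the special case $Q=\{1,\dots,j+1\}$. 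The author proves that lemma by a perturbation argument (taking $u=-\alpha B^Tv$ and letting $\alpha\to 0$); your derivation via the equivalence $x^TMx=0\Leftrightarrow Mx=0$ for $M\succeq 0$ reaches the same conclusion more directly. Note also that your argument nowhere uses the Hankel structure, so what you have actually proved is the statement for an arbitrary singular psd matrix with the rank index defined through its columns; that is a feature, not a defect.
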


%

Important property of psd Hankel matrices is the following rank principle.

\begin{theorem}\label{rank-principle} \cite[Corollary 2.5]{CF91} 
	 Let $k\in \NN$, 
	$\beta=(\beta_0,\ldots,\beta_{2k})$, 
	$\widetilde \beta=(\beta_0,\ldots, \beta_{2k-2})$,
	$A_\beta\succeq 0$ and $r=\Rank \widetilde \beta$.
	Then:
  \begin{enumerate}
	\item\label{pt1-10-07-20} $\Rank A_{\widetilde \beta}=r$.
	\item $r\leq \Rank A_\beta \leq r+1$.
	\item $\Rank A_\beta = r+1$ if and only if 
				$$\beta_{2k}> \varphi_0 \beta_{2k-r}+\ldots+
					\varphi_{r-1}\beta_{2k-1},$$
	  where
			$(\varphi_0,\ldots,\varphi_{r-1}):=
			A_{\beta}(r-1)^{-1}(\beta_r,\ldots,\beta_{2r-1})^{T}$ 
  \end{enumerate}
\end{theorem}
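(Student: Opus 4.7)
The strategy is to view $A_\beta$ as a one-row/column extension of $A_{\widetilde\beta}$ and apply the Schur-complement technology of Subsection~\ref{SubS2.1}, with a short polynomial identity at the end to convert the answer into the explicit $\varphi_j$-formula.

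Part (1) follows from Proposition~\ref{alternative-rank} together with Hankel propagation: $A_{\widetilde\beta}$ is a psd principal submatrix of $A_\beta$ whose leading $r\times r$ corner $A_\beta(r-1)$ is nonsingular, so $\Rank A_{\widetilde\beta}\geq r$. A $2\times 2$ block decomposition of $A_{\widetilde\beta}$ with $A_\beta(r-1)$ in the top-left together with two applications of Theorem~\ref{block-psd} (the first to note that the psd Schur complement $A_{\widetilde\beta}/A_\beta(r-1)$ has vanishing $(0,0)$-entry coming from singularity of $A_{\widetilde\beta}(r)$, the second to conclude that the whole Schur complement is zero) forces every further column of $A_{\widetilde\beta}$ into the span of the first $r$, giving $\Rank A_{\widetilde\beta}=r$. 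Part (2) is then immediate from Proposition~\ref{rank-13-07} applied to $A_\beta=\bigl(\begin{smallmatrix}A_{\widetilde\beta}&\mathbf b\\ \mathbf b^T&\beta_{2k}\end{smallmatrix}\bigr)$ with $\mathbf b=(\beta_k,\ldots,\beta_{2k-1})^T$.

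For part (3), Proposition~\ref{rank-13-07} reduces the claim to computing the scalar $A_\beta/A_{\widetilde\beta}=\beta_{2k}-\mathbf b^T A_{\widetilde\beta}^+\mathbf b$ and checking when it is positive. Since $A_\beta\succeq 0$ gives $\mathbf b\in\cC(A_{\widetilde\beta})$ by Theorem~\ref{block-psd}, and by part (1) this column space equals the span of the first $r$ columns of $A_{\widetilde\beta}$, the top $r$ rows of $A_{\widetilde\beta}\mathbf x=\mathbf b$ determine the particular solution $\mathbf x=(\mathbf y^T,0,\ldots,0)^T$ with $\mathbf y=A_\beta(r-1)^{-1}\mathbf w_1$ and $\mathbf w_1=(\beta_k,\ldots,\beta_{k+r-1})^T$. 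The standard fact $\mathbf b^T A^+\mathbf b=\mathbf b^T\mathbf x$ (for any solution $\mathbf x$ of $A\mathbf x=\mathbf b$ when $A$ is symmetric and psd) then yields $\mathbf b^T A_{\widetilde\beta}^+\mathbf b=\mathbf w_1^T A_\beta(r-1)^{-1}\mathbf w_1$, while consistency of the bottom $k-r$ rows reads $W^T\mathbf y=\mathbf w_2$ with $W$ and $\mathbf w_2=(\beta_{k+r},\ldots,\beta_{2k-1})^T$ the obvious blocks of $A_{\widetilde\beta}$ and $\mathbf b$.

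The final step, which I expect to be the main obstacle, is the polynomial identity $\mathbf w_1^T A_\beta(r-1)^{-1}\mathbf w_1=\sum_{j=0}^{r-1}\varphi_j\beta_{2k-r+j}$. Introduce the Riesz functional $L(x^m)=\beta_m$, the polynomial $p(x)=x^r-\sum_j\varphi_j x^j$, the polynomial $y(x)=\sum y_ix^i$, and the polynomial division $x^k=y(x)+p(x)q_1(x)$. The defining equation $A_\beta(r-1)\boldsymbol{\varphi}=(\beta_r,\ldots,\beta_{2r-1})^T$ translates into $L(p(x)\cdot x^\ell)=0$ for $\ell=0,\ldots,k-1$; unpacking $W^T\mathbf y=\mathbf w_2$ further yields $L(p(x)q_1(x)x^m)=0$ for $m=r,\ldots,k-1$ (the range $m=0,\ldots,r-1$ is already contained in the first family, since then $q_1x^m\in\RR[x]_{k-1}$), and a dimension count shows these vanishings together span all of $L(p\cdot s)=0$ for $s\in\RR[x]_{2k-r-1}$. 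Then $L(y^2)=\mathbf y^T A_\beta(r-1)\mathbf y=\mathbf w_1^T\mathbf y$, and
\[L(y^2)=L(x^ky)=L(x^{2k})-L(x^kpq_1)=\beta_{2k}-L(x^{2k-r}p)=\sum_{j=0}^{r-1}\varphi_j\beta_{2k-r+j},\]
each equality using one of the vanishings (the third uses $\deg(x^{2k-r}-x^kq_1)\leq 2k-r-1$ to replace $x^kpq_1$ by $x^{2k-r}p$), which completes the proof.
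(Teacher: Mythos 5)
The paper does not actually prove Theorem~\ref{rank-principle}; it is imported verbatim from \cite[Corollary 2.5]{CF91}, so there is no internal proof to compare against and your argument has to stand on its own. It does not quite do so: the gap is in part (1), at the step ``the second [application of Theorem~\ref{block-psd}] to conclude that the whole Schur complement is zero.'' Writing $S:=A_{\widetilde\beta}/A_\beta(r-1)\succeq 0$, the vanishing of $S_{00}$ (which you correctly extract from the singularity of $A_{\widetilde\beta}(r)$) forces only the first row and column of $S$ to vanish, i.e.\ $\mathbf v_r\in\Span\{\mathbf v_0,\ldots,\mathbf v_{r-1}\}$; a psd matrix with one zero diagonal entry need not be zero (e.g.\ $\mathrm{diag}(0,1)$), and nothing in Theorem~\ref{block-psd} rules out $S_{ii}>0$ for $i\geq 1$. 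What is missing is precisely the Hankel propagation you name but never execute: one must show by induction that the relation $\beta_{r+m}=\sum_j\varphi_j\beta_{j+m}$, first obtained for $m=0,\ldots,k$ by extending the kernel vector of $A_\beta(r)$ to $\ker A_\beta$ (Lemma~\ref{extension-principle}), continues to hold for $m=k+1,\ldots,2k-2-r$, by repeatedly observing that each shifted vector $(0,\ldots,0,-\varphi_0,\ldots,-\varphi_{r-1},1,0,\ldots,0)$ has zero quadratic form against $A_\beta$ and hence lies in $\ker A_\beta$. This induction is the real content of \cite[Corollary 2.5]{CF91}, and it is also exactly why the propagation can fail at the last entry $\beta_{2k}$ --- which is what part (3) quantifies. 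Since part (2) and the solvability of $A_{\widetilde\beta}\mathbf x=\mathbf b$ by a vector supported on the first $r$ coordinates both rest on part (1), the gap infects the whole proof.

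By contrast, the Riesz-functional computation in your part (3) is correct and complete once part (1) is granted: the families $\{x^\ell\}_{\ell=0}^{k-1}$ and $\{q_1x^m\}_{m=r}^{k-1}$ do consist of monic polynomials of the distinct degrees $0,\ldots,2k-r-1$, so your dimension count legitimately yields $L(ps)=0$ for all $s\in\RR[x]_{2k-r-1}$, and the chain $\mathbf w_1^TA_\beta(r-1)^{-1}\mathbf w_1=L(y^2)=L(x^ky)=\beta_{2k}-L(x^{2k-r}p)=\sum_j\varphi_j\beta_{2k-r+j}$ checks out degree by degree. If you repair part (1) with the induction sketched above, the rest of the argument goes through.
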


We will use the following corollary of Proposition \ref{alternative-rank} and Theorem \ref{rank-principle} in the sequel.

\begin{corollary}\label{rank-theorem-2}
	In the notation of Theorem \ref{rank-principle}, under the assumptions 
	$A_\beta\succeq 0$, $A_\beta$ is singular, and $r=\Rank \widetilde\beta$, then
	  $$r=\Rank \beta=\Rank A_\beta(r-1)=\Rank A_\beta(r)=\ldots=
	  	\Rank A_{\beta}(k-1)=\Rank A_{\widetilde{\beta}}.$$
\end{corollary}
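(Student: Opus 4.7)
The central structural observation is that for every $0\le j\le k-1$ the leading principal submatrix $A_\beta(j)$ coincides with $A_{\widetilde\beta}(j)$, since both are determined by the entries $\beta_0,\ldots,\beta_{2j}$ and $2j\le 2k-2$. In particular, $A_{\widetilde\beta}=A_\beta(k-1)$ is a principal submatrix of the psd matrix $A_\beta$, so $A_{\widetilde\beta}\succeq 0$, and Theorem \ref{rank-principle}(1) yields $\Rank A_{\widetilde\beta}=r$.

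I would then compute $\Rank\beta$ via Proposition \ref{alternative-rank} applied to $\beta$, splitting according to whether $r=k$ or $r<k$. If $r=k$, then $A_{\widetilde\beta}$ is psd of full rank $k$, hence positive definite, so every leading principal submatrix $A_{\widetilde\beta}(j)=A_\beta(j)$ with $j\le k-1$ is nonsingular; combined with the hypothesis that $A_\beta(k)=A_\beta$ is singular, Proposition \ref{alternative-rank} gives $\Rank\beta=k=r$. If instead $r<k$, then $A_{\widetilde\beta}$ is psd and singular, so Proposition \ref{alternative-rank} applied to $\widetilde\beta$ tells us that $A_{\widetilde\beta}(r)$ is singular while $A_{\widetilde\beta}(0),\ldots,A_{\widetilde\beta}(r-1)$ are nonsingular. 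Transporting this along the identification $A_\beta(j)=A_{\widetilde\beta}(j)$ and applying Proposition \ref{alternative-rank} once more, now to $\beta$, yields $\Rank\beta=r$.

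It remains to show $\Rank A_\beta(r-1)=\Rank A_\beta(r)=\ldots=\Rank A_\beta(k-1)=r$. The matrix $A_\beta(r-1)$ is $r\times r$ and nonsingular (by the previous step, or trivially in the case $r=k$), hence has rank $r$. For $r\le j\le k-1$, I would sandwich: $A_\beta(j)$ contains $A_\beta(r-1)$ as a leading principal submatrix, giving $\Rank A_\beta(j)\ge r$, and is itself a leading principal submatrix of $A_{\widetilde\beta}$, which has rank $r$, giving $\Rank A_\beta(j)\le r$. There is no real obstacle here; the only mild subtlety is the degenerate case $r=k$, in which the list of intermediate submatrices collapses to the single entry $A_\beta(k-1)=A_{\widetilde\beta}$ and the sandwich argument is trivial.
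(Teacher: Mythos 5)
Your proof is correct and follows exactly the route the paper intends: the corollary is stated there without a written-out argument, as a direct consequence of Proposition \ref{alternative-rank} and Theorem \ref{rank-principle}\,(1), and your case split on $r=k$ versus $r<k$ together with the identification $A_\beta(j)=A_{\widetilde\beta}(j)$ for $j\le k-1$ supplies precisely the missing details.
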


We denote the lower right-hand corner of $A_{\beta}$ of size $m+1$ by 
	$$A_{\beta}[m]=\left(\beta_{i+j} \right)_{i,j=m-k}^k
		=\left(\begin{array}{ccccc} 
							\beta_{2(k-m)} & \beta_{2(k-m)+1} &\beta_{2(k-m+1)}& \cdots &\beta_{2k-m}\\
							\beta_{2(k-m)+1} & \beta_{2(k-m+1)} & \iddots & \iddots & \beta_{2k-m+1}\\
							\beta_{2(k-m+1)} & \iddots & \iddots & \iddots & \vdots\\
							\vdots 	& \iddots & \iddots & \iddots & \beta_{2k-1}\\
							\beta_{2k-m} & \beta_{2.k-m+1} & \cdots & \beta_{2k-1} & \beta_{2k}
			\end{array}\right)
		\in S_{m+1}$$
Let 
	$$\beta^{\text{(rev)}}:=(\beta_{2k},\beta_{2k-1},\ldots,\beta_0)$$
be the sequence obtained from $\beta$ by reversing the order of numbers.
Using Corollary \ref{rank-theorem-2} for a reversed sequence implies the following corollary.

\begin{corollary}\label{rank-theorem-3}
	In the notation of Theorem \ref{rank-principle}, under the assumption 
	$A_\beta\succeq 0$, $A_\beta$ is singular and $r=\Rank \widetilde{\beta}^{\text{(rev)}}$, 
	where $\widetilde{\beta}^{\text{(rev)}}:=(\beta_{2k},\ldots,\beta_2)$,
	it holds that 
	$$r=\Rank \beta^{\text{(rev)}}=\Rank A_\beta[r-1]=\Rank A_\beta[r]=\ldots=
		\Rank A_{\beta}[k-1]=\Rank A_{\widetilde{\beta}^{\text{(rev)}}}.$$
\end{corollary}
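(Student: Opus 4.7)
The plan is to reduce the statement directly to Corollary \ref{rank-theorem-2} by applying it to the reversed sequence $\delta:=\beta^{\text{(rev)}}$. Set $\delta_i = \beta_{2k-i}$ for $0\leq i\leq 2k$, and let $J_\ell$ denote the $\ell\times \ell$ reversal (anti-identity) matrix.

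First I would establish the key congruence identity $A_\delta = J_{k+1}A_\beta J_{k+1}$, which follows from a one-line entry check: the $(i,j)$-entry of the right-hand side is the $(k-i,k-j)$-entry of $A_\beta$, i.e.\ $\beta_{2k-i-j}=\delta_{i+j}$. Since $J_{k+1}$ is orthogonal, this congruence preserves positive semidefiniteness and rank, so $A_\delta\succeq 0$ and $A_\delta$ is singular. Exactly the same argument applied to the lower right $(m+1)\times(m+1)$ submatrix gives $A_\delta(m)=J_{m+1}A_\beta[m]J_{m+1}$, whence $\Rank A_\delta(m)=\Rank A_\beta[m]$ for every $0\leq m\leq k$. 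Finally, parsing the notation, $\widetilde{\delta}=(\delta_0,\ldots,\delta_{2k-2})=(\beta_{2k},\ldots,\beta_2)=\widetilde{\beta}^{\text{(rev)}}$, so $A_{\widetilde{\delta}}=A_{\widetilde{\beta}^{\text{(rev)}}}$, and by definition $\Rank\beta^{\text{(rev)}}=\Rank\delta$.

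Next I would verify the hypotheses of Corollary \ref{rank-theorem-2} for $\delta$: the matrix $A_\delta$ is psd and singular by the previous step, and $r=\Rank\widetilde{\beta}^{\text{(rev)}}=\Rank\widetilde{\delta}$ by the identification above. The corollary then yields
\[
r=\Rank\delta=\Rank A_\delta(r-1)=\Rank A_\delta(r)=\ldots=\Rank A_\delta(k-1)=\Rank A_{\widetilde{\delta}}.
\]
Rewriting each term on the right using the rank identities $\Rank A_\delta(m)=\Rank A_\beta[m]$, $A_{\widetilde{\delta}}=A_{\widetilde{\beta}^{\text{(rev)}}}$, and $\Rank\delta=\Rank\beta^{\text{(rev)}}$ gives the claimed chain of equalities.

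There is no genuine obstacle here; the whole content of the corollary is that the upper left/lower right corners of a Hankel matrix swap roles under reversal of the underlying sequence. The only points that require care are (i) the correct parsing of the overloaded symbol $\widetilde{\beta}^{\text{(rev)}}$ as $\widetilde{(\beta^{\text{(rev)}})}$ rather than $(\widetilde{\beta})^{\text{(rev)}}$, and (ii) the check that the congruence $J_{m+1}(\cdot)J_{m+1}$ sends $A_\beta[m]$ to $A_\delta(m)$, which is a direct index computation and preserves both positivity and rank.
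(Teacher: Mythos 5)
Your proposal is correct and follows essentially the same route as the paper: the paper likewise applies Corollary \ref{rank-theorem-2} to the reversed sequence and uses the reversal permutation matrices $P_\ell$ (your $J_{\ell+1}$) to identify $\Rank A_{\beta^{\text{(rev)}}}(\ell)$ with $\Rank A_\beta[\ell]$. Your explicit check that the congruence also transfers positive semidefiniteness and singularity of $A_\beta$ to $A_{\beta^{\text{(rev)}}}$ is a small point the paper leaves implicit, but the argument is the same.
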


\begin{proof}
	Corollary \ref{rank-theorem-2} used for ${\beta}^{\text{(rev)}}$ implies that
		\begin{equation}\label{rank-chain}
			r=\Rank {\beta}^{\text{(rev)}}=\Rank A_{\beta^{\text{(rev)}}}(r-1)=\Rank A_{\beta^{\text{(rev)}}}(r)=\ldots=
				A_{\beta^{\text{(rev)}}}(k-1)=\Rank A_{\widetilde\beta^{\text{(rev)}}}.
		\end{equation}
	For $\ell=0,\ldots,k$ define the permutation matrices 
		$P_{\ell}:\RR^{\ell+1}\to \RR^{\ell+1}$ by $e^{(\ell)}_{i}\mapsto e^{(\ell)}_{\ell+2-i}$, $i=1,\ldots,\ell+1$,
	where $e^{(\ell)}_1,\ldots,e^{(\ell)}_{\ell+1}$ is the standard basis for $\RR^{\ell+1}$.
	Note that 
		$A_{\beta^{\text{(rev)}}}(\ell)=P_{\ell}^TA_{\beta}[\ell]P_{\ell}$
	and hence 
		$\Rank A_{\beta^{\text{(rev)}}}(\ell)=\Rank A_{\beta}[\ell]$, which together with \eqref{rank-chain}
	implies the statement of the corollary.
\end{proof}

A sequence $\beta=(\beta_0,\ldots,\beta_{2k})$ with $r:=\Rank \beta$ is \textbf{positively recursively generated}
if $A_\beta(r-1)\succ 0$ and denoting $(\varphi_0,\ldots,\varphi_{r-1}):=A_{\beta}(r-1)^{-1}(\beta_r,\ldots,\beta_{2r-1})^{T}$, it is true that
\begin{equation}\label{recursive-generation}
  \beta_j=\varphi_0\beta_{j-r}+\cdots+\varphi_{r-1}\beta_{j-1}\quad \text{for}\quad j=r,\ldots,2k.
\end{equation}
Note that \eqref{recursive-generation} is equivalent to
\begin{equation}\label{recursive-generation-equivelant}
  \mbf{v_j}=\varphi_0 \mbf{v_{j-r}}+\cdots+\varphi_{r-1}\mbf{v_{j-1}}\quad \text{for}\quad j=r,\ldots,k.
\end{equation}

\subsection{Solution of the truncated Hamburger moment problem}\label{SubS2.3}

\begin{theorem}\label{Hamburger}\cite[Theorem 3.9]{CF91}
	For $k\in \NN$ and $\beta=(\beta_0,\ldots,\beta_{2k})$ with $\beta_0>0$, the following statements are equivalent:
\begin{enumerate}	
	\item There exists a representing measure for $\beta$ supported on $K=\RR$.
	\item There exists a $(\Rank \beta)$-atomic representing measure for $\beta$.
	\item $\beta$ is positively recursively generated.
	\item\label{pt4-v2206} $A_\beta\succeq 0$ and $\Rank A_\beta=\Rank \beta$.
\end{enumerate}
\end{theorem}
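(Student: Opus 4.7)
The plan is to establish the cyclic chain of implications $(2)\Rightarrow(1)\Rightarrow(4)\Rightarrow(3)\Rightarrow(2)$. The step $(2)\Rightarrow(1)$ is immediate. For $(1)\Rightarrow(4)$, if $\mu$ is a representing measure then $A_\beta$ is the Gram matrix of $(1,X,\ldots,X^k)$ with respect to $\langle f,g\rangle=\int fg\,d\mu$, hence $A_\beta\succeq 0$; the equality $\Rank A_\beta=\Rank\beta$ holds by definition when $A_\beta$ is nonsingular, and follows from Corollary \ref{rank-theorem-2} combined with Proposition \ref{alternative-rank} when $A_\beta$ is singular.

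For $(4)\Rightarrow(3)$, set $r:=\Rank\beta$. When $r=k+1$, we have $A_\beta(r-1)=A_\beta\succ 0$ and the recursion range $j=r,\ldots,2k$ is empty, so (3) reduces to positive definiteness of $A_\beta$. When $r\leq k$, Corollary \ref{rank-theorem-2} yields $\Rank A_\beta(r-1)=r$, so the psd matrix $A_\beta(r-1)$ has full rank and is therefore positive definite. By the definition of $\Rank\beta$, the column $\mbf{v_r}$ lies in $\Span\{\mbf{v_0},\ldots,\mbf{v_{r-1}}\}$, and inverting $A_\beta(r-1)(\varphi_0,\ldots,\varphi_{r-1})^T=(\beta_r,\ldots,\beta_{2r-1})^T$ produces the coefficients. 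The Hankel shift then propagates this single column relation to \eqref{recursive-generation-equivelant} for every $r\leq j\leq k$, which is entrywise equivalent to \eqref{recursive-generation}.

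The substantive step is $(3)\Rightarrow(2)$, which I would handle by constructing an explicit $r$-atomic representing measure via Gaussian quadrature. In the singular case $r\leq k$, introduce the monic polynomial
\[
    p(x):=x^r-\varphi_{r-1}x^{r-1}-\cdots-\varphi_1 x-\varphi_0.
\]
The recursion in (3), evaluated for $j=r,\ldots,2r-1$, is equivalent to the orthogonality $L(p(x)\cdot x^i)=0$ for $0\leq i\leq r-1$, where $L(x^j):=\beta_j$ denotes the Riesz functional; that is, $p\perp\RR[x]_{r-1}$ with respect to $L$. In the nonsingular case $r=k+1$, first extend the sequence by an arbitrary $\beta_{2k+1}\in\RR$, and then take $p$ to be the unique monic degree-$r$ polynomial orthogonal to $\RR[x]_{r-1}$ with respect to $L$, which exists by Gram--Schmidt since $A_\beta(r-1)\succ 0$. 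In either case the strict positivity of $L$ on nonzero squares of degree $\leq r-1$ (equivalent to $A_\beta(r-1)\succ 0$) combined with standard orthogonal-polynomial theory on the real line forces $p$ to have $r$ distinct real roots $x_1,\ldots,x_r$, and the unique weights solving $\sum_{j=1}^r\rho_j x_j^i=\beta_i$ for $0\leq i\leq r-1$ are all strictly positive. Setting $\mu:=\sum_{j=1}^r\rho_j\delta_{x_j}$, the identity $\int x^i\,d\mu=\beta_i$ holds for $i<r$ by construction; for $r\leq i\leq 2k$ it follows by induction, using $p(x_j)=0$ together with \eqref{recursive-generation}. The main obstacle is precisely this orthogonal-polynomial step: establishing that $p$ has $r$ simple real zeros with strictly positive Gaussian weights, both of which rest on the strict positivity $A_\beta(r-1)\succ 0$.
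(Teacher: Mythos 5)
The paper gives no proof of this statement at all---it is quoted directly from \cite{CF91} (Theorem 3.9)---so there is no internal argument to compare against. Your architecture ($\eqref{pt4-v2206}$ reached from a measure, then positive recursive generation, then the classical Gaussian-quadrature construction of an atomic measure from the orthogonal polynomial $p$) is the standard Akhiezer--Krein/Curto--Fialkow route, and your step $(3)\Rightarrow(2)$ is essentially sound. Two of the other implications, however, have genuine gaps.

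In $(1)\Rightarrow(4)$ you assert that, in the singular case, $\Rank A_\beta=\Rank\beta$ ``follows from Corollary \ref{rank-theorem-2} combined with Proposition \ref{alternative-rank}.'' Both of those results assume only that $A_\beta$ is psd and singular, and under that hypothesis alone the conclusion is false: for $\beta=(1,0,0,0,1)$ with $k=2$ the Hankel matrix is $\mathrm{diag}(1,0,1)\succeq 0$ with $\Rank A_\beta=2$, while $\mbf{v_1}=0\in\Span\{\mbf{v_0}\}$ gives $\Rank\beta=1$ (and indeed this $\beta$ admits no measure). The cited results only give $\Rank\beta=\Rank A_\beta(k-1)$; the missing equality $\Rank A_\beta=\Rank A_\beta(k-1)$ is precisely where the representing measure must be used. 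The standard repair: the column relation $\mbf{v_r}=\sum_{i<r}\varphi_i\mbf{v_i}$ produces a monic $p$ of degree $r\leq k$ with $\int x^\ell p\,d\mu=0$ for all $\ell\leq k$, hence $\int p^2\,d\mu=0$, hence $\supp\mu\subseteq\cZ_p$ and $\Card\supp\mu\leq r$, so $A_\beta$ is a sum of at most $r$ rank-one matrices and $\Rank A_\beta\leq r$.

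In $(4)\Rightarrow(3)$ the sentence ``the Hankel shift then propagates this single column relation to \eqref{recursive-generation-equivelant} for every $r\leq j\leq k$'' conceals the main difficulty. Shifting the relation $\mbf{v_r}=\sum_i\varphi_i\mbf{v_i}$, i.e.\ the kernel vector $\Phi=(-\varphi_0,\ldots,-\varphi_{r-1},1,0,\ldots,0)^T$ of $A_\beta$, by one index only determines the first $k$ of the $k+1$ entries of the putative relation for $\mbf{v_{r+1}}$; the last entry---and hence \eqref{recursive-generation} for $j>r+k$---does not follow from the Hankel structure alone. One must invoke $A_\beta\succeq 0$ \emph{and} $\Rank A_\beta=r$: writing $A_\beta\Phi'=t\,e_k$ for the shifted vector, either $(\Phi')^TA_\beta\Phi'=0$ forces $\Phi'\in\ker A_\beta$ by positive semidefiniteness, or else $t\,e_k\in\cC(A_\beta)=\Span\{\mbf{v_0},\ldots,\mbf{v_{r-1}}\}$ forces $t=0$ because $A_\beta(r-1)$ is invertible. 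Without this induction (which is the content of the corresponding recursiveness lemma in \cite{CF91}), the implication is unproved. A smaller point: when $r=k+1$ the range $j=r,\ldots,2k$ is not empty for $k\geq 1$; condition (3) degenerates to $A_\beta\succ 0$ only because the defining coefficients would require the nonexistent moment $\beta_{2k+1}$, not because there is nothing to check.
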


A straightforward corollary of Theorem \ref{Hamburger} and Corollary \ref{rank-theorem-2} is the following.

\begin{corollary}\label{singular-case-measure}
	Let $k\in \NN$ and $\beta=(\beta_0,\ldots,\beta_{2k})$ with $\beta_0>0$. Suppose that $A_{\beta}$ is singular.
	The following statements are equivalent:
	\begin{enumerate}
		\item There exists a representing measure for $\beta$ supported on $K=\RR$.
		\item There exists a $(\Rank \beta)$-atomic representing measure for $\beta$.
		\item $\beta$ is positively recursively generated.
		\item $A_{\beta}\succeq 0$ and $\Rank A_{\beta}=\Rank A_{\beta}(k-1)$.
	 \end{enumerate}
\end{corollary}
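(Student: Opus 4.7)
The plan is to reduce the corollary directly to Theorem \ref{Hamburger} by way of Corollary \ref{rank-theorem-2}. The equivalences (1)$\Leftrightarrow$(2)$\Leftrightarrow$(3) are immediate, since they are literally parts (1), (2), (3) of Theorem \ref{Hamburger}. So the only real content is the equivalence of (4) with condition (\ref{pt4-v2206}) of Theorem \ref{Hamburger}, i.e., with ``$A_\beta\succeq 0$ and $\Rank A_\beta=\Rank \beta$'', under the standing singularity hypothesis.

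For that equivalence, I would first observe that in both conditions the semidefiniteness assumption $A_\beta\succeq 0$ appears explicitly, so the question reduces to comparing $\Rank \beta$ with $\Rank A_\beta(k-1)$ under the assumptions $A_\beta\succeq 0$ and $A_\beta$ singular. Setting $\widetilde\beta=(\beta_0,\ldots,\beta_{2k-2})$ and $r=\Rank \widetilde\beta$, Corollary \ref{rank-theorem-2} is directly applicable and yields the chain of equalities
$$
\Rank \beta \;=\; \Rank A_\beta(k-1) \;=\; r.
$$
In particular $\Rank A_\beta(k-1)=\Rank \beta$, so the identity $\Rank A_\beta=\Rank A_\beta(k-1)$ holds if and only if $\Rank A_\beta=\Rank \beta$. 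This shows (4)$\Leftrightarrow$(\ref{pt4-v2206} of Theorem \ref{Hamburger}), which by Theorem \ref{Hamburger} is equivalent to (3), closing the cycle.

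There is no genuine obstacle here; the whole content of the corollary is the observation that in the singular case one may read off $\Rank \beta$ from the upper-left $k\times k$ corner $A_\beta(k-1)$, which is exactly what Corollary \ref{rank-theorem-2} supplies. The only minor thing to be mindful of is that Corollary \ref{rank-theorem-2} requires the positive semidefiniteness of $A_\beta$ together with singularity, and both hypotheses are either assumed in the corollary or are part of condition (4); the equivalence then assembles without further computation.
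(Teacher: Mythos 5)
Your proposal is correct and follows exactly the route the paper intends: the paper gives no written proof, merely noting that the corollary is "a straightforward corollary of Theorem \ref{Hamburger} and Corollary \ref{rank-theorem-2}," and your reduction of condition (4) to condition (\ref{pt4-v2206}) of Theorem \ref{Hamburger} via the equality $\Rank\beta=\Rank A_\beta(k-1)$ from Corollary \ref{rank-theorem-2} is precisely that argument.
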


\subsection{Partially positive semidefinite matrices and their completions}\label{SubS2.4}

A \textbf{partial matrix} $A=(a_{ij})_{i,j=1}^n$ is a matrix of real numbers $a_{ij}\in \RR$, where some of the entries are not specified. 

A partial symmetric matrix $A=(a_{ij})_{i,j=1}^n$ is 
\textbf{partially positive semidefinite (ppsd)} 
(resp.\ \textbf{partially positive definite (ppd)}) 
if the following two conditions hold:
\begin{enumerate} 
  \item $a_{ij}$ is specified if and only if $a_{ji}$ is specified and $a_{ij}=a_{ji}$.
  \item All fully specified principal minors of $A$ are psd (resp.\ pd). 
\end{enumerate}

It is well-known that a ppsd matrix $A(x)$ of the form as in Lemma \ref{psd-completion} below admits a psd completion.
(This follows from the fact that the corresponding graph is chordal, see e.g.\ \cite{GJSW84,Dan92,BW11}.) 
In the notation of Lemma \ref{psd-completion}, if $A(x_0)$, $x_0\in \RR$, is a psd Hankel matrix, 
then Corollary \ref{rank-theorem-2} implies that \eqref{assump-on-ranks} below holds.
Since we will need an additional information about the rank of the completion $A(x_0)$ and 
the explicit interval of all possible $x_0$ for our results, we give a proof of Lemma \ref{psd-completion} based on 
the use of generalized Schur complements assuming \eqref{assump-on-ranks} holds.

\begin{lemma}\label{psd-completion}
	Let 
		$$A(x):=\left(\begin{array}{ccc}
			A_1 & a & b \\
			a^T & \alpha & x\\
			b^T & x & \beta
			\end{array}\right)\in S_n$$ 
	be a partially positive semidefinite symmetric matrix,
	where $A_1\in S_{n-2}$, $a,b\in \RR^{n-2}$, $\alpha,\beta\in \RR$ and $x$ is a variable.
	Let
		$$A_2:=\left(\begin{array}{cc}
				A_1 & a \\
				a^T & \alpha
				\end{array}\right)\in S_{n-1},\qquad
		A_3:=\left(\begin{array}{cc}
				A_1 & b \\
				b^T & \beta
				\end{array}\right)\in S_{n-1},$$
	and
		$$x_{\pm}:=b^TA_1^{+}a\pm \sqrt{(A_2/A_1)(A_3/A_1)}\in \RR.$$
	Suppose the following holds:
		\begin{equation}\label{assump-on-ranks}
			A_1\;\text{is invertible}\quad \text{or} \quad \Rank A_1=\Rank A_2.
		\end{equation}
	Then: 
	\begin{enumerate}
		\item\label{psd-comp-pt1} $A(x_{0})$ is positive semidefinite if and only if $x_0\in [x_-,x_+]$. 
	 	\item\label{psd-comp-pt2} 
			$$\Rank A(x_0)=
			\left\{\begin{array}{rl}
			\max\big\{\Rank A_2, \Rank A_3\big\},& \text{for}\;x_0\in \{x_-,x_+\},\\
			\max\big\{\Rank A_2, \Rank A_3\big\}+1,& \text{for}\;x_0\in (x_-,x_+).
			\end{array}\right.$$
		\item\label{psd-comp-pt3} If $A(x)$ is partially positive definite, then 
			$A(x')$ is positive definite for $x'\in (x_{-},x_{+})$.
	\end{enumerate}
\end{lemma}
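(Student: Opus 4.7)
The plan is to view $A(x)$ as a $2\times 2$ block matrix with $A_1$ in the upper-left corner and the variable $2\times 2$ block containing $\alpha,\beta,x$ in the lower-right corner, and to reduce everything to the generalized Schur complement $A(x)/A_1$, which is itself only $2\times 2$. All three claims then follow from Theorem \ref{block-psd} applied twice together with a rank-additivity identity for the Schur complement.

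First I would verify that $\cC(B)\subseteq \cC(A_1)$, where $B:=(a\ b)$. If $A_1$ is invertible this is immediate. Otherwise, by assumption \eqref{assump-on-ranks}, $\Rank A_1=\Rank A_2$, so Theorem \ref{block-psd} applied to the (ppsd, hence psd) submatrix $A_2$ gives $a\in\cC(A_1)$; applied to $A_3$ (also a fully specified psd principal submatrix) it gives $b\in\cC(A_1)$. Hence, by Theorem \ref{block-psd} again,
$$A(x_0)\succeq 0\ \Longleftrightarrow\ A(x_0)/A_1\succeq 0,$$
and a direct calculation yields
$$A(x_0)/A_1=\begin{pmatrix} A_2/A_1 & x_0-b^TA_1^+a \\ x_0-b^TA_1^+a & A_3/A_1 \end{pmatrix}.$$
Since $A_2,A_3\succeq 0$, Theorem \ref{block-psd} also gives $A_2/A_1,A_3/A_1\geq 0$, so this $2\times 2$ matrix is psd iff $(x_0-b^TA_1^+a)^2\leq (A_2/A_1)(A_3/A_1)$, which is exactly $x_0\in[x_-,x_+]$. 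This proves part (1).

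For part (2) I would use the standard rank-additivity $\Rank A(x_0)=\Rank A_1+\Rank(A(x_0)/A_1)$, valid whenever $\cC(B)\subseteq\cC(A_1)$ (and following from a block $LDL^T$-type factorization). At $x_0=x_\pm$ the determinant of $A(x_0)/A_1$ vanishes, so its rank is $0$ or $1$: it is $0$ iff $A_2/A_1=A_3/A_1=0$, which by Proposition \ref{rank-13-07} means $\Rank A_2=\Rank A_3=\Rank A_1$; otherwise it is $1$, and again by Proposition \ref{rank-13-07} at least one of $\Rank A_2,\Rank A_3$ equals $\Rank A_1+1$. In both subcases the identity $\Rank A(x_\pm)=\max\{\Rank A_2,\Rank A_3\}$ is verified. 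For $x_0\in(x_-,x_+)$, the interval is nonempty only when $(A_2/A_1)(A_3/A_1)>0$, forcing $\Rank A_2=\Rank A_3=\Rank A_1+1$, while $\det(A(x_0)/A_1)>0$ gives $\Rank(A(x_0)/A_1)=2$; combining, $\Rank A(x_0)=\Rank A_1+2=\max\{\Rank A_2,\Rank A_3\}+1$. Finally, for part (3), the ppd hypothesis forces $A_1$ pd, so the same $2\times 2$ Schur-complement argument with strict inequalities yields $A(x')\succ 0$ on the open interval.

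The main obstacle is not computational but organizational: in part (2) one must line up the two possible values of $\Rank(A(x_0)/A_1)$ at the endpoints with the admissible configurations of $(\Rank A_2,\Rank A_3)$ permitted by Proposition \ref{rank-13-07}, and in particular check that the degenerate case $x_-=x_+$ is consistent with the stated formula. Modulo that bookkeeping, the proof is essentially a single application of the Schur complement criterion.
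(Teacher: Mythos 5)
Your proof is correct, but it takes a genuinely different route from the paper's. You form the generalized Schur complement with respect to the $(n-2)\times(n-2)$ block $A_1$, reducing everything to the explicit $2\times 2$ matrix $A(x)/A_1$ with diagonal entries $A_2/A_1,\ A_3/A_1$ and off-diagonal entry $x-b^TA_1^{+}a$, whose positive semidefiniteness is a single determinant inequality. The paper instead peels off only the last row and column (Schur complement with respect to $A_2$), which forces a case split on whether $A_2/A_1>0$ or $A_2/A_1=0$ and, in the first case, invokes the quotient formula of Proposition \ref{Schur3by3-ver2}; that formula needs $A_1$ and $A_2$ nonsingular, which is exactly where hypothesis \eqref{assump-on-ranks} enters. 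Your route is more uniform and in fact shows that \eqref{assump-on-ranks} is not load-bearing for the stated conclusions: the inclusions $a\in\cC(A_1)$ and $b\in\cC(A_1)$ already follow from $A_2\succeq 0$ and $A_3\succeq 0$ alone via Theorem \ref{block-psd}, so your appeal to \eqref{assump-on-ranks} at that point is redundant rather than essential. The one ingredient you use that the paper never states is the rank additivity $\Rank A(x_0)=\Rank A_1+\Rank\big(A(x_0)/A_1\big)$ for a trailing $2\times 2$ block; this is the block version of Proposition \ref{rank-13-07} and follows from the congruence of $A(x_0)$ with $A_1\oplus\big(A(x_0)/A_1\big)$ once $a,b\in\cC(A_1)$ (the same computation as in \eqref{col-space-M} and \eqref{col-space-A2}), so you should include a line justifying it. Your endpoint bookkeeping in part \eqref{psd-comp-pt2} is also right: $\Rank\big(A(x_\pm)/A_1\big)$ is $0$ exactly when $A_2/A_1=A_3/A_1=0$ and $1$ otherwise, which via Proposition \ref{rank-13-07} matches $\max\{\Rank A_2,\Rank A_3\}-\Rank A_1$ in both subcases, and $(x_-,x_+)\neq\emptyset$ forces $A_2/A_1>0$ and $A_3/A_1>0$, hence $\Rank A_2=\Rank A_3=\Rank A_1+1$.
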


\begin{proof}
By Theorem \ref{block-psd}, $A(x)\succeq 0$ if and only if
\begin{equation}\label{psd-cond-1}
	A_2\succeq 0, \qquad 
	\left(\begin{array}{c} b \\ x\end{array}\right)\in
	\cC(A_2)
	\qquad\text{and}
	\qquad
	f(x):= A(x)/A_2\geq 0,
\end{equation}
The first condition of \eqref{psd-cond-1} is true by the ppsd assumption.

Since $A_2\succeq 0$, it follows by Theorem \ref{block-psd} that $a\in \cC(A_1)$ and hence by the properties of the 
Moore-Penrose inverse we have that $A_1(A_1^+a)=a$.
Thus, 
	\begin{equation}\label{col-space-A2}
		\cC(A_2)=
		\cC\big(\left(\begin{array}{cc}
			A_1 & 0 \\
			a^T & \alpha-a^TA_1^+a
		\end{array}\right)\big)=
	\cC\big(\left(\begin{array}{cc}
			A_1 & 0 \\
			a^T & A_2/A_1
		\end{array}\right)\big).
	\end{equation}
Now we separate two cases according to $A_2/A_1$.\\

\noindent \textbf{Case 1:} $A_2/A_1>0$.\\

\eqref{col-space-A2} and the assumption of Case 1 imply that $\cC(A_2)=\cC(A_1\oplus 1)$.
Since $A_3\succeq 0$, it follows by Theorem \ref{block-psd} that $b\in \cC(A_1)$.
Therefore
	$\left(\begin{array}{cc} b & x\end{array}\right)^T \in \cC(A_1\oplus 1)$ 
for every $x\in \RR$. 
Thus the second condition of \eqref{psd-cond-1} is true for every $x\in \RR$.
	
Note that the assumption of Case 1 and Proposition \ref{rank-13-07} imply that $\Rank A_2>\Rank A_1$ and hence the assumption
\eqref{assump-on-ranks} implies invertibility of $A_1$ and $A_2$. 
By Proposition \ref{Schur3by3-ver2}, used for $A(x)$ as $K$, $A_2$ as $M$ and $A_1$ as $A$, we have that
	\begin{equation}\label{cond2}
		f(x)=A_3/A_1 -
		(A_2/A_1)^{-1}(x-b^TA_1^{+}a)^2.
	\end{equation} 
Therefore $f(x_0)\geq 0$ if and only if $x_0\in [x_-,x_+]$, which is the third condition of \eqref{psd-cond-1}.
Now by Proposition \ref{rank-13-07} we know that $\Rank A(x)>\Rank A_2$ if and only if $f(x_0)>0$, 
which establishes \eqref{psd-comp-pt1},\eqref{psd-comp-pt2} in the case $A_2/A_1>0$.
\\

\noindent \textbf{Case 2:} $A_2/A_1=0$.\\

\eqref{col-space-A2} and the assumption of Case 2 imply that 
	\begin{equation}\label{col-space-10-07-20}
		\cC(A_2)=\cC\big(\left(\begin{array}{c} A_1 \\ a^T\end{array}\right)\big).
	\end{equation}
Therefore, using \eqref{col-space-10-07-20}, it is true that
	\begin{equation}\label{col-space2-10-07-20}
		\left(\begin{array}{c} b \\ x\end{array}\right)\in \cC(A_2) \;\Leftrightarrow \;	
		\left(\begin{array}{c} b \\ x\end{array}\right)=
		\left(\begin{array}{c} A_1 \\ a^T\end{array}\right)z=
		\left(\begin{array}{c} A_1z \\ a^Tz\end{array}\right)\quad \text{for some }z\in \RR^{n-2}.
	\end{equation}
Since $A_3\succeq 0$, it follows by Theorem \ref{block-psd} that 
$b\in \cC(A_1)$ and hence by the properties of the 
Moore-Penrose inverse
	$\{A_1^+b+w\colon w\in \ker A_1\}$
is the set of all solutions $z$ of the system $A_1z=b$. 
Therefore, using \eqref{col-space2-10-07-20}, it follows that
	$$\left(\begin{array}{c} b \\ x\end{array}\right)\in \cC(A_2) \;\Leftrightarrow \;	
		x\in \{a^TA_1^+b+a^Tw\colon w\in \ker A_1\}=\{a^TA_1^+b\},$$
where we used the fact that $A_1$ is symmetric, $a\in \cC(A_1)$ and $w\in \ker A_1$ for the last equality.
So only $x_0=a^TA_1^+b$ satisfies the second condition of \eqref{psd-cond-1}.

Now by definition of the generalized Schur complement, we have
	$$f(x)=\beta-
	\left(\begin{array}{cc} b^T & x\end{array}\right)
	A_2^+
	\left(\begin{array}{c} b \\ x\end{array}\right).
	$$
By the properties of the Moore-Penrose inverse 
	$$A_2^+\left(\begin{array}{c} b \\ x_0\end{array}\right)=
	\left(\begin{array}{c} A_1^+b \\ 0\end{array}\right)+v\quad \text{for some}\;v\in \ker A_2.
	$$
Hence,
	$$f(x_0)
	  =\beta-
	\left(\begin{array}{cc} b^T & x_0\end{array}\right)
	\Big(\left(\begin{array}{c} A_1^+b \\ 0\end{array}\right)+v\Big)
       =\beta-b^TA_1^+b=A_3/A_1\geq 0,$$
where the second equality follows from
the fact that $A_2$ is symmetric, $\left(\begin{array}{cc} b^T & x_0\end{array}\right)^T\in \cC(A_2)$ and $v\in \ker A_2$, and
the last inequality follows by the ppsd assumption. 
Note that $x_0=x_+=x_-$ and by Proposition \ref{rank-13-07}, $\Rank A(x_0)=\Rank A_2$ if and only if $A_3/A_1=0$, 
in which case also $\Rank A_3=\Rank A_2$.
Otherwise we have $f(x_0)=A_3/A_1>0$, which implies by Proposition \ref{rank-13-07} that 
	$\Rank A(x_0)=\Rank A_3=\Rank A_1+1$.
Thus \eqref{psd-comp-pt1},\eqref{psd-comp-pt2} are true in the case $A_2/A_1=0$.

\eqref{psd-comp-pt3} follows from \eqref{psd-comp-pt2} by noticing that $A_2/A_1>0$, $A_3/A_1>0$ and $\Rank A_2=\Rank A_3=n-1.$
\end{proof}

\subsection{Extension principle}\label{SubS2.5}

The extension principle for psd matrices is the following.

\begin{lemma}\label{extension-principle}
	Let $A\in S_n$ be a positive semidefinite matrix, 
	$Q\subseteq \{1,\ldots,n\}$ a subset 
	and	
	$A_Q$ be the restriction of $A$ to rows and columns from the set $Q$. 
	If $v\in \ker A_Q$ is a nonzero vector from the kernel of $A_Q$,
	then the vector $\widehat{v}$ with the only nonzero entries in rows from $Q$ and such that the restriction 
	$\widehat{v}|_Q$ to the rows from $Q$ equals to $v$, belongs to $\ker A$.  
\end{lemma}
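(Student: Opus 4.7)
The plan is to reduce the claim to the standard fact that, for a positive semidefinite matrix $A$, the null space of $A$ coincides with the zero set of the associated quadratic form $\{x : x^T A x = 0\}$.

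First, I would observe that since $\widehat{v}$ is supported on $Q$ (its entries outside of $Q$ vanish by construction), the quadratic form evaluates to
\[
  \widehat{v}^T A \widehat{v} \;=\; \sum_{i,j \in Q} v_i\, A_{ij}\, v_j \;=\; v^T A_Q v.
\]
The hypothesis $v \in \ker A_Q$ then forces $\widehat{v}^T A \widehat{v} = 0$.

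Second, I would factor $A = B^T B$, available since $A \succeq 0$ (via Cholesky, or via a symmetric square root obtained from the spectral decomposition). Then
\[
  0 \;=\; \widehat{v}^T A \widehat{v} \;=\; \widehat{v}^T B^T B \widehat{v} \;=\; \|B\widehat{v}\|_2^2,
\]
so $B\widehat{v} = 0$, whence $A\widehat{v} = B^T(B\widehat{v}) = 0$, which is exactly the statement $\widehat{v} \in \ker A$.

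The only subtlety worth flagging — and it is not really an obstacle — is the asymmetry between the two steps: the identity $\widehat{v}^T A \widehat{v} = v^T A_Q v$ only sees the principal submatrix $A_Q$, whereas the conclusion $A \widehat{v} = 0$ is a statement about \emph{every} row of $A$, including rows indexed outside of $Q$. The factorization $A = B^T B$ handles this automatically, because $B\widehat{v} = 0$ is a vectorial identity that then propagates to all rows of $A$, not only to those lying in $Q$. I do not anticipate any genuine difficulty; the lemma is a direct bookkeeping consequence of the well-known quadratic-form characterization of the kernel of a psd matrix.
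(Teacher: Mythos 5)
Your proof is correct. It takes a genuinely different (and arguably cleaner) route than the paper's. The paper permutes $A$ into the block form $\begin{pmatrix} A_Q & B \\ B^T & C\end{pmatrix}$, reduces the claim to showing $B^Tv=0$, and establishes this by a perturbation argument: assuming $B^Tv\neq 0$, it evaluates the quadratic form at $w=(v^T, -\alpha v^TB)$ and shows positivity fails for sufficiently small $\alpha>0$. You instead invoke the standard characterization $\ker A=\{x: x^TAx=0\}$ for psd $A$, proved via a factorization $A=B^TB$: the identity $\widehat{v}^TA\widehat{v}=v^TA_Qv=0$ follows from the support condition on $\widehat{v}$, and then $\lVert B\widehat{v}\rVert^2=0$ forces $A\widehat{v}=B^T(B\widehat{v})=0$. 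Your remark about the apparent asymmetry (the quadratic form only sees $A_Q$, but the conclusion concerns all rows of $A$) is exactly the content the paper's $\alpha\to 0$ argument is designed to handle, and the factorization disposes of it just as effectively. Both arguments are elementary and complete; yours trades the explicit block bookkeeping for the existence of a square root, which is freely available since $A\succeq 0$.
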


\begin{proof}
	By permuting rows and columns we may assume that $A$ is of the form
	   $A=\begin{mpmatrix} A_Q & B \\ B^T & C\end{mpmatrix}.$
	We have to prove that 
	\begin{equation}\label{kernel-14-07-20}
		A\begin{mpmatrix} v\\ 0 \end{mpmatrix}=0.
	\end{equation} 
	Since $A$ is psd, for every 
 	$w:=\left(\begin{array}{cc} v^T & u^T \end{array}\right)\in \RR^{n}$ we have that
	\begin{equation}\label{ineq-14-07-20}
	   0\leq w A w^T=2 u^T B^Tv+u^TCu.
	\end{equation}
	If $B^Tv\neq 0$, then we define $u:=-\alpha B^Tv$ where $\alpha>0$ is an arbitrary positive real number, and 
	plug into \eqref{ineq-14-07-20} to get
	\begin{equation}\label{kernel-psd-submatrix}
		0\leq -2\alpha \left\|B^Tv\right\|^2+\alpha^2 v^TBCB^Tv=\alpha(\alpha v^TBCB^Tv-2 \left\|B^Tv\right\|^2)
		=:\alpha S(\alpha).
	\end{equation}
	Since $\lim_{\alpha\to 0}S(\alpha)=-2 \left\|B^Tv\right\|^2<0$, \eqref{kernel-psd-submatrix} cannot be true for $\alpha$ small enough. 
	Hence $B^Tv= 0$, which proves \eqref{kernel-14-07-20}.
\end{proof}

\subsection{Subsequences of one-dimensional moment sequences}\label{SubS2.6}

\begin{proposition}\label{measure-for-subsequence}
	Let $k\in \NN$ and $\beta=(\beta_0,\ldots,\beta_{2k})$ with $\beta_0>0$ be a sequence which admits a representing measure supported on $K=\RR$.
	Then for every $i,j\in \NN$, where $0\leq i\leq j\leq k$, a subsequence $\beta^{(i,j)}:=(\beta_{2i},\ldots,\beta_{2j})$ also admits a representing measure supported on $K=\RR$.
\end{proposition}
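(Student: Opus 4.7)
The plan is to give a direct measure-theoretic construction: reweight the representing measure of $\beta$ by the nonnegative factor $x^{2i}$ to obtain a representing measure for the shifted subsequence $\beta^{(i,j)}$.

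More precisely, I would start by fixing a $K$-representing measure $\mu$ for $\beta$, so that $\beta_n=\int_{\RR} x^n\,d\mu(x)$ for $n=0,1,\ldots,2k$; such $\mu$ exists by hypothesis. I would then define a new Borel measure $\nu$ on $\RR$ via
\begin{equation*}
	d\nu(x) := x^{2i}\,d\mu(x).
\end{equation*}
Because $x^{2i}\geq 0$ pointwise and $\mu$ is a positive Borel measure, $\nu$ is also a positive Borel measure on $\RR$; the fact that $2i\leq 2k$ together with the existence of the $2k$-th moment of $\mu$ guarantees that $\nu$ is finite and all of its moments up to order $2(j-i)$ are finite as well.

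The verification that $\nu$ represents $\beta^{(i,j)}$ is then immediate: for every $m\in\{0,1,\ldots,2(j-i)\}$ one has $2i+m\leq 2j\leq 2k$, so
\begin{equation*}
	\int_{\RR} x^m\,d\nu(x) = \int_{\RR} x^{m+2i}\,d\mu(x) = \beta_{2i+m},
\end{equation*}
which is exactly the $m$-th moment condition for the subsequence $\beta^{(i,j)}=(\beta_{2i},\ldots,\beta_{2j})$. There is essentially no obstacle here; the only point worth flagging is that the shift amount is chosen to be even ($2i$, not an odd power), which is what keeps the weighting nonnegative and hence keeps $\nu$ a positive measure. For an odd shift one would need a different argument, but for the even shift required by the statement the construction is entirely routine.
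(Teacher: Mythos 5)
Your proof is correct, and it takes a genuinely different route from the paper. You construct the representing measure for the subsequence directly, by reweighting: $d\nu = x^{2i}\,d\mu$ is a positive Borel measure (the weight $x^{2i}$ is nonnegative and $\nu(\RR)=\beta_{2i}<\infty$), and its $m$-th moment is $\beta_{2i+m}$ for all $m$ with $2i+m\leq 2k$, which covers the required range $0\leq m\leq 2(j-i)$. The paper instead argues entirely inside its linear-algebra framework: it observes that $A_{\beta^{(i,j)}}$ is a principal submatrix of $A_\beta\succeq 0$, and then verifies the Curto--Fialkow rank condition $\Rank A_{\beta^{(i,j-1)}}=\Rank A_{\beta^{(i,j)}}$ by a case analysis on singularity, using Corollary \ref{rank-theorem-2}, the extension principle (Lemma \ref{extension-principle}), and the fact that $\beta$ is positively recursively generated, before invoking Corollary \ref{singular-case-measure}. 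Your argument is shorter and more elementary, and in fact proves slightly more (the entire shifted tail $(\beta_{2i},\ldots,\beta_{2k})$ is a moment sequence); the paper's argument has the advantage of directly producing a $(\Rank\beta^{(i,j)})$-atomic measure and of exercising the matrix machinery that the rest of the paper relies on, though by Theorem \ref{Hamburger} the mere existence you establish already implies the rank-atomic version. The only point worth flagging in your version is the degenerate case where $\mu$ is concentrated at the origin and $i\geq 1$, in which $\nu$ is the zero measure representing the identically zero subsequence; this is consistent with the paper's definition of a representing measure and causes no harm.
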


\begin{proof}
	Note that $A_\beta$ is of the form
		$$A_{\beta}=
			\left(\begin{array}{ccc} 
				A_{\beta^{(0,i-1)}} & \ast & \ast \\ 
				\ast & A_{\beta^{(i,j)}} & \ast \\ 
				\ast & \ast & A_{\beta^{(j+1,k)}}
			\end{array}\right).$$
	By Theorem \ref{Hamburger}, $A_\beta\succeq 0$ and hence $A_{\beta^{(i,j)}}\succeq 0$. 
	For $i=j$ the statement is clear, i.e., the representing atom is $\beta_{2i}$ with density 1. 
	Assume that $i<j$. We separate two cases according to the invertibility of $A_{\beta^{(i,j)}}$.
	\begin{enumerate}
		\item If $A_{\beta^{(i,j)}}\succ 0$, then 
				$\Rank A_{\beta^{(i,j)}}=\Rank \beta^{(i,j)}=j-i+1$
			and by Theorem \ref{Hamburger}, $\beta^{(i,j)}$ admits a measure.
		\item\label{case-2-subsequence-14-07-20}
			Else 
				$$A_{\beta^{(i,j)}}=\left(\begin{array}{cc} A_{\beta^{(i,j-1)}} & v^T \\ v & \beta_{2j} \end{array}\right)$$ 
			is singular, where 
				$v=\left(\begin{array}{ccc}\beta_{j} & \cdots & \beta_{2j-1}\end{array}\right)$. 
			We separate two cases according to the invertibility of $A_{\beta^{(i,j-1)}}$.
			\begin{itemize}
					\item If $A_{\beta^{(i,j-1)}}$ is invertible, then $\Rank A_{\beta^{(i,j-1)}}=\Rank A_{\beta^{(i,j)}}$.
					\item Else
					 	$A_{\beta^{(i,j-1)}}$ is singular and by Corollary \ref{rank-theorem-2} used for $\beta^{(i,j)}$ as $\beta$,
						we get 
							$\Rank A_{\beta^{(i,j-2)}}=\Rank A_{\beta^{(i,j-1)}}$.
						This implies that the last column of $A_{\beta^{(i,j-1)}}$ is in the span of the other columns of $A_{\beta^{(i,j-1)}}$.
						By Lemma \ref{extension-principle}, the $j$-th column of $A_{\beta}$ is in the span of the columns $i+1,\ldots, j-1$.  
						Since $\beta$ is positively recursively generated, the $(j+1)$-th column of $A_{\beta}$ is in the span of the columns $i+2,\ldots, j$
						and in particular the last column of  $A_{\beta^{(i,j)}}$ is in the span of the other columns of $A_{\beta^{(i,j)}}$.
						Hence $\Rank A_{\beta^{(i,j-1)}}=\Rank A_{\beta^{(i,j)}}$.
			\end{itemize}
		In both subcases of \eqref{case-2-subsequence-14-07-20}, $\Rank A_{\beta^{(i,j-1)}}=\Rank A_{\beta^{(i,j)}}$ and
		Corollary \ref{singular-case-measure} implies that $\beta^{(i,j)}$ admits a measure.
	 \end{enumerate}
\end{proof}

\section{Truncated Hamburger moment problem of degree $2k$ with gap $(\beta_{2k-1})$ and
	 $(\beta_{2k-2},\beta_{2k-1})$}
\label{S3}

In this section we solve the THMP of degree $2k$ with gaps $(\beta_{2k-1})$ 
	(see Theorem \ref{trunc-Hamb-without-3n-1-bg})
and $(\beta_{2k-2},\beta_{2k-1})$ 
	(see Theorem \ref{trunc-Hamb-without-3n-2-and-1}). 
As a corollary of Theorem \ref{trunc-Hamb-without-3n-1-bg} 
we obtain the solution to the TMP for the curve $y=x^3$ (see Corollary \ref{Y=X3-gen-bg}),
while as a corollary of Theorem \ref{trunc-Hamb-without-3n-2-and-1} 
we get the solution to the TMP for the curve $y=x^4$ and an additional moment $\beta_{3,2k-2}$ given
(see Corollary \ref{Y=X4-gen}).

\subsection{Truncated Hamburger moment problem 
of degree $2k$ with gap $(\beta_{2k-1})$}

\begin{theorem}
\label{trunc-Hamb-without-3n-1-bg} 
  Let $k\in \NN$ and 
	$$\beta(x):=(\beta_0,\beta_1,\ldots,\beta_{2k-2},x,\beta_{2k})$$ 
  be a sequence where each $\beta_i$ is a real number, $\beta_0>0$ and $x$ is a variable. 
  Let 
	$$\widehat{\beta}:=(\beta_0,\ldots,\beta_{2k-4})
		\quad{and}\quad 
		\widetilde{\beta}:=(\beta_0,\ldots,\beta_{2k-2})$$
  be subsequences of $\beta(x)$, 
	$v:=(\begin{array}{ccc}\beta_{k} & \cdots & \beta_{2k-2}\end{array})$ a vector 
  and 
	$$\widetilde{A}:=\left(\begin{array}{cc} A_{\widehat\beta} & v^T \\ v & \beta_{2k}\end{array}\right)$$
  a matrix.
  Then the following statements are equivalent: 
\begin{enumerate}	
	\item\label{pt1-v1206} There exists $x_0\in \RR$ and a representing measure for $\beta(x_0)$ supported on $K=\RR$.
	\item\label{pt2-v1206} There exists $x_0\in \RR$ and a $(\Rank \widetilde\beta)$-atomic representing measure for $\beta(x_0)$.
	\item\label{pt3-v1206} $A_{\beta(x)}$ is partially positive semidefinite and one of the following conditions is true: 
	\begin{enumerate}
		\item\label{k1-11:52-200720} $k=1$.
		\item $k>1$ and one of the following conditions is true:
			\begin{enumerate}
				\item\label{pt3a-v1706} $A_{\widetilde{\beta}}\succ 0$.
				\item\label{pt3b-v1706} 
						$\Rank A_{\widehat\beta}=\Rank A_{\widetilde{\beta}}=\Rank \widetilde A.$
			\end{enumerate}
	\end{enumerate}
\end{enumerate}
\end{theorem}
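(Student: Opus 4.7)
The plan is to identify $A_{\beta(x)}$ with the partial matrix $A(x)$ of Lemma~\ref{psd-completion}: with $A_1=A_{\widehat\beta}$, $a=(\beta_{k-1},\ldots,\beta_{2k-3})^T$, $b=v^T$, $\alpha=\beta_{2k-2}$ and $\beta=\beta_{2k}$, the matrices $A_2$ and $A_3$ of the lemma are precisely $A_{\widetilde\beta}$ and $\widetilde A$. The implication $(\ref{pt2-v1206})\Rightarrow(\ref{pt1-v1206})$ is trivial, so it suffices to establish $(\ref{pt1-v1206})\Rightarrow(\ref{pt3-v1206})$ and $(\ref{pt3-v1206})\Rightarrow(\ref{pt2-v1206})$.

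For $(\ref{pt1-v1206})\Rightarrow(\ref{pt3-v1206})$: Theorem~\ref{Hamburger} gives $A_{\beta(x_0)}\succeq 0$, hence the partial positive semidefiniteness of $A_{\beta(x)}$. If $k=1$ we are done; if $k>1$ and $A_{\widetilde\beta}\succ 0$, we are in case~(\ref{pt3a-v1706}). Otherwise $A_{\widetilde\beta}$ is psd and singular, and Corollary~\ref{rank-theorem-2} applied to $\widetilde\beta$ gives $\Rank A_{\widehat\beta}=\Rank\widetilde\beta=:r\leq k-1$. By Theorem~\ref{rank-principle}, $\Rank A_{\beta(x_0)}\in\{r,r+1\}\leq k$, so $A_{\beta(x_0)}$ is singular; a second application of Corollary~\ref{rank-theorem-2}, now to $\beta(x_0)$, together with Theorem~\ref{Hamburger}(\ref{pt4-v2206}), yields $\Rank A_{\beta(x_0)}=\Rank\beta(x_0)=r$ and $\Rank A_{\widetilde\beta}=\Rank A_{\beta(x_0)}(k-1)=r$. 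Since $\widetilde A$ is a principal submatrix of $A_{\beta(x_0)}$ that itself contains $A_{\widehat\beta}$ as a principal submatrix, the sandwich $r=\Rank A_{\widehat\beta}\leq\Rank\widetilde A\leq\Rank A_{\beta(x_0)}=r$ forces $\Rank\widetilde A=r$, which is case~(\ref{pt3b-v1706}).

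For $(\ref{pt3-v1206})\Rightarrow(\ref{pt2-v1206})$: when $k=1$ the $1$-atomic measure $\beta_0\delta_{x_0/\beta_0}$ with $x_0:=\sqrt{\beta_0\beta_2}$ (well-defined because $\beta_2\geq 0$ by ppsd) represents $\beta(x_0)$. For $k>1$, both cases~(\ref{pt3a-v1706}) and (\ref{pt3b-v1706}) satisfy hypothesis~\eqref{assump-on-ranks} of Lemma~\ref{psd-completion}: in (\ref{pt3a-v1706}) the matrix $A_{\widehat\beta}$, as a principal submatrix of the pd matrix $A_{\widetilde\beta}$, is invertible; in (\ref{pt3b-v1706}) the rank equality is given. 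Lemma~\ref{psd-completion}(\ref{psd-comp-pt1}),(\ref{psd-comp-pt2}) then produces $x_0\in\{x_-,x_+\}$ with $A_{\beta(x_0)}\succeq 0$ and $\Rank A_{\beta(x_0)}=\max\{\Rank A_{\widetilde\beta},\Rank\widetilde A\}=\Rank\widetilde\beta$: in (\ref{pt3a-v1706}) the maximum equals $k=\Rank\widetilde\beta$ since $\widetilde A$ is $k\times k$, while in (\ref{pt3b-v1706}) all three ranks coincide via Corollary~\ref{rank-theorem-2}. Thus $A_{\beta(x_0)}$ is singular of rank $\Rank\widetilde\beta$, Corollary~\ref{rank-theorem-2} applied to $\beta(x_0)$ gives $\Rank\beta(x_0)=\Rank\widetilde\beta=\Rank A_{\beta(x_0)}$, and Theorem~\ref{Hamburger}(\ref{pt4-v2206}) delivers the desired $(\Rank\widetilde\beta)$-atomic representing measure.

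The main technical obstacle is the rank bookkeeping: two separate applications of Corollary~\ref{rank-theorem-2} (once to $\widetilde\beta$, once to $\beta(x_0)$) are needed to reconcile $\Rank A_{\widehat\beta}$, $\Rank A_{\widetilde\beta}$, $\Rank\widetilde A$ and $\Rank\beta(x_0)$ with the rank output of Lemma~\ref{psd-completion}(\ref{psd-comp-pt2}), so that Theorem~\ref{Hamburger}(\ref{pt4-v2206}) can be invoked. The $k=1$ case must be handled separately since Lemma~\ref{psd-completion}'s decomposition degenerates when $A_{\widehat\beta}$ is $0\times 0$.
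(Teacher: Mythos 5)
Your proposal is correct and follows essentially the same route as the paper: both directions hinge on identifying $A_{\beta(x)}$ with the partial matrix of Lemma \ref{psd-completion} (with $A_{\widehat\beta},A_{\widetilde\beta},\widetilde A$ as $A_1,A_2,A_3$), verifying hypothesis \eqref{assump-on-ranks} in cases \eqref{pt3a-v1706} and \eqref{pt3b-v1706}, and using the same sandwich $\Rank A_{\widehat\beta}\leq\Rank\widetilde A\leq\Rank A_{\beta(x_0)}$ for the forward direction. The only cosmetic differences are that you exhibit the $k=1$ atom explicitly and finish via Theorem \ref{Hamburger}\eqref{pt4-v2206} plus Corollary \ref{rank-theorem-2} where the paper packages the same step as Corollary \ref{singular-case-measure}.
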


\begin{proof}
First we prove the implication $\eqref{pt1-v1206}\Rightarrow\eqref{pt3-v1206}$.
By Theorem \ref{Hamburger}, $A_{\beta(x_0)}\succeq 0$ and $\Rank A_{\beta(x_0)}=\Rank\beta(x_0)$.
$A_{\beta(x_0)}\succeq 0$ in particular implies that $A_{\beta(x)}$ is ppsd. 
If $k=1$, then \eqref{k1-11:52-200720} holds.
Otherwise $k>1$.
If $A_{\widetilde\beta}\succ 0$, then \eqref{pt3a-v1706} holds.
Else $A_{\widetilde{\beta}}$ is singular and hence
	\begin{equation}\label{rank-eq-14-07-20}
		\Rank A_{\widehat{\beta}}=\Rank A_{\widetilde{\beta}}=\Rank \beta(x_0)=A_{\beta(x_0)},
	\end{equation}
where the first two equalities follow by Corollary \ref{rank-theorem-2} 
used for $\beta(x_0)$ as $\beta$ and the last by Theorem \ref{Hamburger}.  
$A_{\widehat\beta}$ being a principal submatrix of  $\widetilde{A}$ and 
$\widetilde{A}$ being a principal submatrix of  
	$$A_{\beta(x_0)}=
		\left(\begin{array}{ccc}
			A_{\widehat \beta} & u^T & v^T\\
			u & \beta_{2k-2} & x_0\\
			v & x_0 & \beta_{2k}
		\end{array}\right),$$ 
where $u=(\begin{array}{ccc}\beta_{k-1} & \cdots & \beta_{2k-3}\end{array})$,
imply together with \eqref{rank-eq-14-07-20} that \eqref{pt3b-v1706} holds
and concludes the proof of the implication  $\eqref{pt1-v1206}\Rightarrow\eqref{pt3-v1206}$.

Second we prove the implication $\eqref{pt3-v1206}\Rightarrow\eqref{pt2-v1206}$.
We separate two cases according to $k$.
\begin{itemize}
	\item $k=1$. We have that 
		$A_{\beta(x)}=\left(\begin{array}{cc} \beta_0 & x \\ x & \beta_{2}\end{array}\right)$.
		For $x_0=\sqrt{\beta_0\beta_2}$, $A_{\beta(x_0)}$ is of rank 1 and the second column is the multiple
		of the first. Hence, by Corollary \ref{singular-case-measure}, a 1-atomic measure exists, proving 
		the implication $\eqref{pt3-v1206}\Rightarrow\eqref{pt2-v1206}$ in this case.
	\item $k>1$. Notice that $A_{\beta(x)}$ is of the same form as $A(x)$ from Lemma \ref{psd-completion},
		where $A_{\widehat \beta}$, $A_{\widetilde \beta}$, $\widetilde A$ correspond to $A_1$, $A_2$, $A_3$, 		
		respectively.
		Since both cases \eqref{pt3a-v1706} and \eqref{pt3b-v1706} satisfy the assumption 
		\eqref{assump-on-ranks},
		it follows by Lemma \ref{psd-completion} that there exists $x_0$ such that $A_{\beta(x_0)}\succeq 0$ and
		\begin{equation}\label{rank-psd-lemma}
			\Rank A_{\beta(x_0)}=
				\max\left\{\Rank A_{\widetilde{\beta}},
				\Rank \widetilde{A}\right\}.
		\end{equation}
		Since in the case \eqref{pt3a-v1706}, it holds that $\Rank \widetilde{A}\leq \Rank A_{\widetilde{\beta}}$, 
		while in the case \eqref{pt3b-v1706},
		$\Rank \widetilde{A}= \Rank A_{\widetilde{\beta}}$, we obtain from \eqref{rank-psd-lemma} that
				$\Rank A_{\beta(x_0)}=\Rank A_{\widetilde{\beta}}.$
		By Corollary \ref{singular-case-measure}, $(\Rank \widetilde\beta)$-representing measure for $\beta(x_0)$ 	
		exists, which proves $\eqref{pt2-v1206}$.
\end{itemize}

The implication $\eqref{pt2-v1206}\Rightarrow\eqref{pt1-v1206}$ is trivial.
\end{proof}

\begin{example}\label{ex-v1}
	For $k=9$, let
		\begin{align*}
			\beta^{(1)}(x) &=(1,0,1,0,2,0,5,0,14,0,42,0,132,0,429,0,2000,x,338881),\\
			\beta^{(2)}(x) &=\Big(14,\frac{7}{2},\frac{79}{4},-\frac{67}{8},\frac{1055}{16},-\frac{1935}{32},
						\frac{18195}{64},-\frac{43115}{128},\frac{336151}{256},-\frac{926695}{512},
						\frac{6407195}{1024},-\frac{19736547}{2048},\\
			&\hspace{0,5cm}\frac{124731423}{4096},-\frac{419176415}{8192},\frac{2469281827}{16384},-\frac{8894873563}{32768},
				\frac{49568350247}{65536},x,\frac{1006568996907}{262144}\Big),\\
			\beta^{(3)}(x) &= (8,0,78,0,1446,0,32838,0,794886,0,19651398,0,489352326,0,12216629958,0,305262005766,\\
			&\hspace{0,5cm}x,7630169896518).
		\end{align*}
	Let $\widetilde A^{(i)}$, $i=1,2,3$, denote $\widetilde A$ from Theorem \ref{trunc-Hamb-without-3n-1-bg} 
	corresponding to $\beta^{(i)}(x)$. Using \textit{Mathematica} \cite{Wol} one can check that:
	\begin{itemize}
		\item $\widetilde A^{(i)}\succeq 0$ for $i=1,2,3.$
		\item $A_{\widetilde \beta^{(1)}}\succ 0$, 
				$A_{\widetilde \beta^{(2)}}\not\succeq 0$, 
				$A_{\widetilde \beta^{(3)}}\succeq 0$ and $\dim\left(\ker A_{\widetilde \beta^{(3)}}\right)=1$.
		\item $\Rank A_{\widehat \beta^{(3)}}=\Rank \widetilde A^{(3)}=\Rank A_{\widetilde \beta^{(3)}}=8$.
	\end{itemize}
	Therefore:
	\begin{itemize}
		\item  $A_{\beta^{(1)}(x)}$ is ppsd and $\widetilde \beta^{(1)}$ satisfies \eqref{pt3a-v1706} of 		
			Theorem \ref{trunc-Hamb-without-3n-1-bg}, implying that a 9-atomic measure for 
			$\beta^{(1)}(x)$ exists.
		\item $A_{\beta^{(2)}(x)}$ is not ppsd and by Theorem \ref{trunc-Hamb-without-3n-1-bg}, there is no
			representing measure for $\beta^{(2)}(x)$.
		\item $A_{\beta^{(3)}(x)}$ is ppsd and $\widetilde \beta^{(3)}$ satisfies \eqref{pt3b-v1706} of 		
			Theorem \ref{trunc-Hamb-without-3n-1-bg}, implying that an 8-atomic measure for 
			$\beta^{(3)}(x)$ exists.
	\end{itemize}
\end{example}

The following corollary is a consequence of Theorem \ref{trunc-Hamb-without-3n-1-bg} and is an alternative solution of the bivariate TMP 
for the curve $y=x^3$, first solved by Fialkow in \cite{Fia11}. 

\begin{corollary}
	\label{Y=X3-gen-bg}
	Let $k\in \NN$ and
 		$\displaystyle\beta=(\beta_{i,j})_{i,j\in \ZZ^2_+,i+j\leq 2k}$ 
	be a 2-dimensional real multisequence of degree $2k$.
	Suppose $M(k)$ is positive semidefinite and recursively generated.
	Let 
	$$u^{(i)}:=(\beta_{0,i},\beta_{1,i},\beta_{2,i})\quad \text{for }i=0,\ldots,2k-2,$$
	$$\widehat{\beta}:=(u^{(0)},\ldots,u^{(2k-2)})\qquad\text{and}\qquad
		\widetilde{\beta}:=(u^{(0)},\ldots,u^{(2k-2)},\beta_{0,2k-1},\beta_{1,2k-1})$$
	be subsequences of $\beta$.
	Then $\beta$ has a representing measure supported on $y=x^3$ if and only if the following statements hold:
	\begin{enumerate}
		\item One of the following holds: 
			\begin{itemize}
				\item If $k\geq 3$, then $Y=X^3$ is a column relation of $M(k)$. 
				\item If $k=2$, then the equalities 
					$\beta_{0,1}=\beta_{3,0}$, $\beta_{1,1}=\beta_{4,0}$, $\beta_{0,2}=\beta_{3,1}$ hold.
			\end{itemize}
		\item\label{point1-121620} One of the following holds: 
			\begin{enumerate}
				\item $A_{\widetilde\beta}\succ 0$.	
				\item $A_{\widetilde\beta}\succeq 0$ and 
					$\Rank A_{\widehat\beta}=\Rank A_{\widetilde\beta}=\Rank M(k)$.
			\end{enumerate}
	\end{enumerate}

	Moreover, if the representing measure exists, then:
	\begin{itemize}
		\item If $A_{\widetilde\beta}$ is nonsingular, there exists a $(3k)$-atomic measure. 
		\item If $A_{\widetilde\beta}$ is singular, then the measure is
			$(\Rank M(k))$-atomic.
	\end{itemize}
\end{corollary}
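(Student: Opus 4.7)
The plan is to pull the problem back to the univariate setting via the parametrization $\Phi\colon \RR \to K$, $t \mapsto (t, t^3)$, of the curve $K = \{y = x^3\}$. Positive Borel measures on $K$ are in bijection with positive Borel measures on $\RR$ via pushforward, and $\Phi_*\mu'$ represents $\beta$ on $K$ precisely when $\mu'$ represents the univariate sequence $\gamma$ defined by $\gamma_{i+3j} := \beta_{i,j}$; hence the existence of a $K$-representing measure for $\beta$ is equivalent to the existence of an $\RR$-representing measure for $\gamma$, to which I would apply Theorem \ref{trunc-Hamb-without-3n-1-bg}.

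First I would check that $\gamma$ is well-defined. Writing $n = 3q + r$ with $0 \leq r \leq 2$ and requiring $q + r \leq 2k$ for $\beta_{r,q}$ to be an entry of $\beta$, the reachable indices are exactly $\{0, 1, \ldots, 6k\} \setminus \{6k-1\}$, so $\gamma$ has degree $6k$ with a single gap at $6k - 1$---exactly the setup of Theorem \ref{trunc-Hamb-without-3n-1-bg} with its parameter $k$ replaced by $3k$. For two representations of the same $n$, the consistency $\beta_{r,q} = \beta_{r',q'}$ follows by chaining $\beta_{a,b+1} = \beta_{a+3,b}$. For $k \geq 3$, this identity is an entry-level consequence of $Y = X^3 \in \ker M(k)$ and recursive generation applied to $x^\alpha y^\beta(y - x^3) \in \RR[x,y]_k$, valid whenever $a + b \leq 2k - 3$; all chains involved stay within this range. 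For $k = 2$, only the three coincidences $\beta_{0,1} = \beta_{3,0}$, $\beta_{1,1} = \beta_{4,0}$, $\beta_{0,2} = \beta_{3,1}$ occur, and these are the explicit hypotheses; for $k = 1$ no coincidences arise.

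Next I would identify the relevant matrices. Directly from the definitions, $A_{\widehat\beta} = A_{\widehat\gamma}$ and $A_{\widetilde\beta} = A_{\widetilde\gamma}$, where $\widehat\gamma = (\gamma_0, \ldots, \gamma_{6k-4})$ and $\widetilde\gamma = (\gamma_0, \ldots, \gamma_{6k-2})$. Moreover, the column reduction $x^a y^b \mapsto x^{a+3b}$ has image $S_k := \{0, 1, \ldots, 3k-2, 3k\}$ and realizes $M(k)$ as congruent to the principal submatrix of $A_\gamma$ on rows and columns indexed by $S_k$, which is precisely the matrix $\widetilde A$ appearing in Theorem \ref{trunc-Hamb-without-3n-1-bg}; so $M(k) \succeq 0 \iff \widetilde A \succeq 0$ and $\Rank M(k) = \Rank \widetilde A$. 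Applying Theorem \ref{trunc-Hamb-without-3n-1-bg} to $\gamma$, the ppsd condition on $A_{\gamma(x)}$ decomposes as $\widetilde A \succeq 0$ (i.e., the standing $M(k) \succeq 0$) together with $A_{\widetilde\gamma} \succeq 0$ (subsumed by condition (2)); the theorem's two alternatives $A_{\widetilde\gamma} \succ 0$ and $\Rank A_{\widehat\gamma} = \Rank A_{\widetilde\gamma} = \Rank \widetilde A$ correspond under these identifications to (2)(a) and (2)(b); and the atom counts translate to $3k$-atomic in the positive-definite case and $(\Rank M(k))$-atomic in the singular case. The converse direction is immediate from $\beta_{i,j} = \int x^{i+3j}\,d\mu$.

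The principal obstacle is the matrix identification in the third paragraph: one must show that the column reduction map identifies $M(k)$ with the principal submatrix of $A_\gamma$ indexed by $S_k$ as bilinear forms, preserving both positive semidefiniteness and rank. Concretely, this amounts to exhibiting $M(k)$ as the pullback of this submatrix under the linear surjection $\RR[x,y]_k \twoheadrightarrow \Span\{x^n : n \in S_k\}$ induced by $y \mapsto x^3$, and verifying that its kernel is the subspace of $\RR[x,y]_k$ generated by the multiples $q \cdot (y - x^3)$ with $q \in \RR[x,y]_{k-3}$ (for $k \geq 3$; for $k = 2$ the analogous verification uses the three explicit equalities in place of a polynomial column relation).
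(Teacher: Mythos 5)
Your proposal is correct and follows essentially the same route as the paper: the substitution $\gamma_{i+3j}:=\beta_{i,j}$ (well-defined by recursive generation and the column relation $Y=X^3$), reduction to Theorem \ref{trunc-Hamb-without-3n-1-bg} with parameter $3k$ and gap at $6k-1$, and the identifications $A_{\widehat\beta}=A_{\widehat\gamma}$, $A_{\widetilde\beta}=A_{\widetilde\gamma}$, $\Rank M(k)=\Rank\widetilde A$. The only cosmetic difference is that you transport arbitrary representing measures directly via the pushforward under $t\mapsto(t,t^3)$, whereas the paper first invokes Bayer--Teichmann to reduce to finitely atomic measures; you are also somewhat more explicit than the paper about the congruence $M(k)=P^T\widetilde A\,P$ underlying the rank identification.
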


\begin{proof}
	For $m\in \{0,1\ldots,6k-2,6k\}$ we define the numbers $\widetilde \beta_m$ by the following rule
		\begin{equation*}
			\widetilde \beta_m:=\beta_{m\Mod{3},\lfloor\frac{m}{3}\rfloor}.
		\end{equation*}
	\noindent \textbf{Claim 1.} Every number $\widetilde \beta_m$ is well-defined.\\

	We have to prove that $m\Mod{3}+\lfloor\frac{m}{3}\rfloor\leq 2k.$ 
	We separate three cases according to $m$.
	\begin{itemize}
		\item $m\leq 6k-4$: $\lfloor\frac{m}{3}\rfloor+m\Mod{3}\leq (2k-2)+2=2k$.
		\item $m\in \{6k-3,6k-2\}$: 
			$\lfloor\frac{m}{3}\rfloor+m\Mod{3}\leq (2k-1)+1=2k$.
		\item $m=6k$: $\lfloor\frac{m}{3}\rfloor+m\Mod{3}= 2k+0=2k.$\\
	\end{itemize}
	
	\noindent \textbf{Claim 2.} 
		Let $t\in \NN$. 
		The atoms $(x_1,x_1^3),\ldots (x_t,x_t^3)$ with densities $\lambda_1,\ldots,\lambda_t$
		are the $(y-x^3)$-representing measure for $\beta$
		if and only if
		the atoms $x_1,\ldots,x_t$ with densities $\lambda_1,\ldots,\lambda_t$
		are the $\RR$-representing measure for 
		 $\widetilde \beta(x)=(\widetilde \beta_0,\ldots,\widetilde \beta_{2k-2},x,\widetilde\beta_{2k})$.\\

	The if part follows from the following calculation:
		$$\widetilde \beta_{m}
			=\beta_{m\Mod{3},\lfloor\frac{m}{3}\rfloor}
			=\sum_{\ell=1}^t \lambda_\ell x_\ell^{m\Mod{3}}x_{\ell}^{3\lfloor\frac{m}{3}\rfloor}
			=\sum_{\ell=1}^t \lambda_\ell x_\ell^{m\Mod{3}+3\lfloor\frac{m}{3}\rfloor}
			=\sum_{\ell=1}^t \lambda_\ell x_\ell^{m},$$
	where $m=0,1,\ldots,6k-2,6k$.

	The only if part follows from the following calculation:
	\begin{align*}
	  \beta_{i,j}
		&= \beta_{i-3,j+1}=\cdots=\beta_{i\Mod{3},j+\lfloor\frac{i}{3}\rfloor}\\
		&=\widetilde \beta_{i\Mod{3}+3(j+\lfloor\frac{i}{3}\rfloor)}
		  = \sum_{\ell=1}^t \lambda_\ell x_\ell^{i\Mod{3}+3(j+\lfloor\frac{i}{3}\rfloor)}
		  =\sum_{\ell=1}^t \lambda_\ell x_\ell^{i\Mod{3}+3\lfloor\frac{i}{3}\rfloor} x_{\ell}^{3j}
		  =\sum_{\ell=1}^t \lambda_\ell x_\ell^{i}(x_{\ell}^3)^{j},
	\end{align*}
	where the equalities in the first line follow by $M(k)$ being rg.\\

	Using Claim 2 and a theorem of Bayer and Teichmann \cite{BT06}, implying that if a finite sequence 
	has a $K$-representing measure, then it has a finitely atomic $K$-representing measure, the statement of the 	
	Corollary follows by Theorem \ref{trunc-Hamb-without-3n-1-bg}.
\end{proof}

\begin{remark}
	\begin{enumerate}
		\item Corollary \ref{Y=X3-gen-bg} in case $k=1$ is an improvement of \cite[Proposition 5.6.ii)]{Fia11} by decreasing the number of 
			atoms from 6 to 3. 
		\item For $M(1)\succ 0$ and $A_{\widetilde\beta}\not\succ 0$, \eqref{point1-121620} of Corollary
			 \ref{Y=X3-gen-bg} is not satisfied and hence the measure does not exist.
			Since this is the case under the assumptions of \cite[Proposition 5.6.iii)]{Fia11},
			the additional conditions in \cite[Proposition 5.6.iii)]{Fia11} are never satisfied.
		\item Examples in the Example \ref{ex-v1} above are derived from \cite[Example 5.2]{Fia11}, \cite[Example 4.18]{Fia08}, \cite[Example 3.3]{Fia08}, 
		which demonstrate the solution of the moment problem for the curve $y=x^3$.
	\end{enumerate}
\end{remark}

\subsection{Truncated Hamburger moment problem of degree $2k$ with gaps $(\beta_{2k-2},\beta_{2k-1})$}
%

\begin{theorem}\label{trunc-Hamb-without-3n-2-and-1} 
  Let $k\in \NN$, $k>1$, and 
	$$\beta(x,y):=(\beta_0,\beta_1,\ldots,\beta_{2k-3},y,x,\beta_{2k})$$ 
  be a sequence, where each $\beta_i$ is a real number, $\beta_0>0$ and $x,y$ are variables.   
  Let 
	$$\widehat{\beta}:=(\beta_0,\ldots,\beta_{2k-6})\quad \text{and}\quad \widetilde{\beta}:=(\beta_0,\ldots,\beta_{2k-4})$$
  be subsequences of $\beta(x,y)$,
	$$
	  u:=\left(\begin{array}{ccc}\beta_{k} & \cdots & \beta_{2k-3}\end{array}\right),\quad
	  s:=\left(\begin{array}{ccc}\beta_{k-1} & \cdots & \beta_{2k-3}\end{array}\right)\quad \text{and}\quad
	  w:=\left(\begin{array}{ccc}\beta_{k-2} & \cdots & \beta_{2k-5}\end{array}\right)
	$$
  vectors and 
  	$$\widetilde{A}:=\left(\begin{array}{cc} A_{\widehat\beta} & u^T \\ u & 
		\beta_{2k}\end{array}\right)$$
  a matrix.
  Then the following statements are equivalent: 
\begin{enumerate}	
	\item\label{pt1-v1606} There exist $x_0,y_0\in \RR$ and a representing measure for $\beta(x_0,y_0)$ supported on $K=\RR$.
	\item\label{pt2-v1606} There exist $x_0,y_0\in \RR$ and a 
		$(\Rank \widetilde\beta)$ or 
		$(\Rank \widetilde\beta+1)$-atomic representing measure for $\beta(x_0,y_0)$.
	\item\label{pt3-v1606} $A_{\beta(x,y)}$ is partially positive semidefinite and one of the following conditions 
		holds: 
	\begin{enumerate}
		\item\label{pt1-14:21-200720} $k=2$ and $\frac{\beta_1^2}{\beta_0}\leq \sqrt{\beta_0\beta_4}$. 
		\item $k>2$, the inequality
				\begin{equation}\label{cond-to-solve-v1606}
					 	sA_{\widetilde{\beta}}^{+}s^T \leq u A_{\widehat{\beta}}^{+}w^T+
						\sqrt{(A_{\widetilde{\beta}}/A_{\widehat{\beta}}) (\widetilde{A}/A_{\widehat{\beta}})}.
				\end{equation}
				holds and one of the following conditions is true: 
			\begin{enumerate}
				\item\label{case1} $A_{\widetilde{\beta}}\succ 0$.
				\item\label{case2} 
				$\Rank A_{\widehat{\beta}}=\Rank A_{\widetilde{\beta}}=
				\Rank \left(\begin{array}{cc}A_{\widetilde{\beta}}& s^T\end{array}\right)=\Rank \widetilde{A}$.
				\end{enumerate}
	\end{enumerate}
\end{enumerate}

Moreover, if the representing measure for $\beta$ exists, then:
\begin{itemize}
	\item If $k=2$, then there is a $1$-atomic measure if $\frac{\beta_1^2}{\beta_0}= \sqrt{\beta_0\beta_4}$.
	  Otherwise there is a 2-atomic measure. 
	\item If $k>2$, there exists a $(\Rank \widetilde\beta)$-atomic if and only if one of the equalities
	\begin{equation}\label{cond-to-solve-v1606-minimally}
		sA_{\widetilde{\beta}}^{+}s^T = 
			u A_{\widehat{\beta}}^{+}w^T- \sqrt{(A_{\widetilde{\beta}}/
			A_{\widehat{\beta}}) (\widetilde{A}/A_{\widehat{\beta}})}\quad \text{or}\quad
		sA_{\widetilde{\beta}}^{+}s^T = 
			u A_{\widehat{\beta}}^{+}w^T+ \sqrt{(A_{\widetilde{\beta}}/
			A_{\widehat{\beta}}) (\widetilde{A}/A_{\widehat{\beta}})}
	\end{equation}
	holds.
\end{itemize}
\end{theorem}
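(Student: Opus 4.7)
The plan is to reduce this two-gap problem to the single-gap problem solved by Theorem \ref{trunc-Hamb-without-3n-1-bg}. The strategy is to first fix $y = y_0$ so that $A_{\beta(x, y_0)}$ is ppsd and the reduced sequence $\gamma(y_0) := (\beta_0, \ldots, \beta_{2k-3}, y_0)$ has a psd Hankel matrix; then Theorem \ref{trunc-Hamb-without-3n-1-bg}, applied to the single-gap sequence $(\beta_0, \ldots, \beta_{2k-3}, y_0, x, \beta_{2k})$ of degree $2k$, will produce $x_0$ and the measure.

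The core calculation identifies the admissible range of $y_0$. The principal submatrix of $A_{\beta(x,y)}$ indexed by $\{0, 1, \ldots, k-3, k-2, k\}$ (i.e., deleting row/column $k-1$) has exactly the block form required by Lemma \ref{psd-completion}, with $A_1 = A_{\widehat\beta}$, $A_2 = A_{\widetilde\beta}$, $A_3 = \widetilde A$, and sole unknown entry equal to $y$ (note $\beta_{2k-2} = y$ also occupies position $(k-2, k)$ of $A_{\beta(x,y)}$, which is the ``unknown'' slot of Lemma \ref{psd-completion}). The rank assumption \eqref{assump-on-ranks} of Lemma \ref{psd-completion} is guaranteed by either (3)(b)(i) or the rank chain in (3)(b)(ii), so Lemma \ref{psd-completion} forces $y \in [x_-, x_+]$ with $x_\pm = u A_{\widehat\beta}^+ w^T \pm \sqrt{(A_{\widetilde\beta}/A_{\widehat\beta})(\widetilde A/A_{\widehat\beta})}$. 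Separately, Theorem \ref{block-psd} gives $A_{\gamma(y)} \succeq 0$ if and only if $s^T \in \cC(A_{\widetilde\beta})$ and $y \geq s A_{\widetilde\beta}^+ s^T$, where the containment is immediate from the hypotheses. Intersecting these constraints, the admissible set is $\big[\max(s A_{\widetilde\beta}^+ s^T,\, x_-),\, x_+\big]$, which is nonempty precisely when \eqref{cond-to-solve-v1606} holds.

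For the implication $(3) \Rightarrow (2)$, the plan is to pick $y_0$ in this admissible interval and verify the hypotheses of Theorem \ref{trunc-Hamb-without-3n-1-bg} for $\beta(x, y_0)$ (whose ``$\widetilde\beta$'' in the sense of that theorem is $\gamma(y_0)$ and whose ``$\widehat\beta$'' is our $\widetilde\beta$). In case (3)(b)(i), a generic $y_0$ in the open interval yields $A_{\gamma(y_0)} \succ 0$, matching case (3)(b)(i) there. In case (3)(b)(ii), the given rank equalities combined with Corollary \ref{rank-theorem-2} and Lemma \ref{extension-principle} propagate to give the rank chain needed for case (3)(b)(ii) of Theorem \ref{trunc-Hamb-without-3n-1-bg}. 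The implication $(1) \Rightarrow (3)$ follows from Theorem \ref{Hamburger}: a representing measure for $\beta(x_0, y_0)$ gives $A_{\beta(x_0, y_0)} \succeq 0$, hence $A_{\beta(x,y)}$ is ppsd; the rank chain in (3)(b)(ii) is read off from Corollary \ref{rank-theorem-2} applied to $\beta(x_0, y_0)$; and the inequality \eqref{cond-to-solve-v1606} is forced by the admissibility analysis above. The implication $(2) \Rightarrow (1)$ is trivial, and the $k = 2$ case (where $\widehat\beta$ is empty) is handled by direct inspection of the $3\times 3$ Hankel matrix using the two nontrivial $2\times 2$ principal minors.

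The moreover part on the atom count follows by tracking ranks in Lemma \ref{psd-completion}(2) together with Theorem \ref{trunc-Hamb-without-3n-1-bg}: a $(\Rank \widetilde\beta)$-atomic measure exists exactly when $y_0$ is forced to equal $s A_{\widetilde\beta}^+ s^T$ (so that $A_{\gamma(y_0)}$ is rank-deficient) and simultaneously $y_0 \in \{x_-, x_+\}$ (so that the principal submatrix above does not gain a rank unit), and these two requirements together are equivalent to one of the equalities \eqref{cond-to-solve-v1606-minimally}. The main technical obstacle is case (3)(b)(ii): propagating the four-way rank equality $\Rank A_{\widehat\beta} = \Rank A_{\widetilde\beta} = \Rank(A_{\widetilde\beta} \mid s^T) = \Rank \widetilde A$ through two successive completions (first fixing $y_0$, then $x_0$) requires careful use of the quotient formula (Proposition \ref{Schur3by3-ver2}) and the extension principle to control column spaces at each stage.
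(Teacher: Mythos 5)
Your proposal follows essentially the same route as the paper: reduce to the single-gap Theorem \ref{trunc-Hamb-without-3n-1-bg} by first completing $y$; recognize the principal submatrix of $A_{\beta(x,y)}$ obtained by deleting row and column $k-1$ as the one-unknown completion problem of Lemma \ref{psd-completion} with $A_1=A_{\widehat\beta}$, $A_2=A_{\widetilde\beta}$, $A_3=\widetilde A$ (this is exactly the paper's $\widehat A(y)$); intersect the resulting interval $[y_-,y_+]$ with the constraint $y\geq sA_{\widetilde\beta}^{+}s^T$ coming from positive semidefiniteness of $A_{(\beta_0,\ldots,\beta_{2k-3},y)}$; and read off \eqref{cond-to-solve-v1606} as nonemptiness of the intersection. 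The one step that does not survive as written is your treatment of case \eqref{case1} in the direction $(3)\Rightarrow(1)$: you propose to take a generic $y_0$ in the open admissible interval so that $A_{(\beta_0,\ldots,\beta_{2k-3},y_0)}\succ 0$ and then invoke condition \eqref{pt3a-v1706} of Theorem \ref{trunc-Hamb-without-3n-1-bg}, but \eqref{cond-to-solve-v1606} is a non-strict inequality, so when $sA_{\widetilde{\beta}}^{+}s^T = u A_{\widehat{\beta}}^{+}w^T+\sqrt{(A_{\widetilde{\beta}}/A_{\widehat{\beta}})(\widetilde{A}/A_{\widehat{\beta}})}$ the admissible set degenerates to the single point $y_+=sA_{\widetilde\beta}^{+}s^T$, the completed matrix $A_{(\beta_0,\ldots,\beta_{2k-3},y_+)}$ is singular, and the positive-definite branch is unavailable. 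The paper sidesteps this by always choosing $y_0=y_+$ and verifying the rank condition \eqref{pt3b-v1706} of the single-gap theorem uniformly, using $\Rank\widehat A(y_+)=\max\{\Rank A_{\widetilde\beta},\Rank\widetilde A\}=\Rank A_{\widetilde\beta}$ from part \eqref{psd-comp-pt2} of Lemma \ref{psd-completion}; your argument needs the same patch. The remainder of your outline, including the characterization of minimal atom count via \eqref{cond-to-solve-v1606-minimally}, matches the paper's proof.
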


\begin{proof}
Note that $\beta(x,y)$ admits a measure if and only if there exist $y_0\in \RR$
such that $\beta(x,y_0)$ admits a measure.
Theorem \ref{trunc-Hamb-without-3n-1-bg} implies that the following claim holds.\\

\noindent\textbf{Claim 1.} 
$\beta(x,y_0)$ admits a measure if and only if the following conditions hold: 
\begin{enumerate}
	\item\label{pt1-C1-15-07-20} $A_{\beta(x,y_0)}$ is ppsd.
	\item One of the following is true:
	\begin{enumerate}
		\item\label{cond1-proof-10-07} $A_{(\widetilde\beta,\beta_{2k-3},y_0)}\succ 0$, where
			$$A_{(\widetilde\beta,\beta_{2k-3},y)}=
			\left\{\begin{array}{ll}
				\left(\begin{array}{cc} \beta_0 & \beta_1 \\ \beta_1 & y \end{array}\right),&
						\text{if }k=2,\\
				\left(\begin{array}{cc} A_{\widetilde{\beta}} & s^T \\ s & y_0 \end{array}\right)=
				\left(\begin{array}{ccc} 
					A_{\widehat{\beta}} & w^T & s_1^T \\ 
					w & \beta_{2k-4} & \beta_{2k-3} \\ 
					s_1 & \beta_{2k-3} & y_0
				\end{array}\right)\quad
			\text{where}\quad s_1^T=
			\left(\begin{array}{c} \beta_{k-1} \\ \vdots \\ 
								\beta_{2k-4} \end{array}\right),&	\text{otherwise}.\end{array}\right.$$ 
		\item\label{cond2-proof} 
			$\Rank A_{\widetilde\beta}=\Rank \widehat A(y_0),$
		where  
			$$\widehat A(y):=\left\{\begin{array}{ll}
				\left(\begin{array}{cc} \beta_0 & y \\ y & \beta_{4}\end{array}\right),& \text{if }k=2,\\
				\left(\begin{array}{cc} A_{\widetilde{\beta}} &  u(y)^T \\ u(y) & \beta_{2k}\end{array}\right)=
				\left(\begin{array}{ccc} A_{\widehat{\beta}} & w^T & u^T \\ w & \beta_{2k-4} & y \\ u & y & 							\beta_{2k}\end{array}\right)
				\quad 
				\text{and}\quad
			  	u(y):=\left(\begin{array}{cc} u &y \end{array}\right),& \text{otherwise}.
				\end{array}\right.$$
	\end{enumerate}
\end{enumerate}

\noindent\textbf{Claim 2.} 
	Let $k>2$. 
	Assume $A_{\widehat \beta}\succ 0$ or $\Rank A_{\widehat \beta}=\Rank A_{\widetilde \beta}$.
	Then $\widehat A(y_0)\succeq 0$ if and only if 
	\begin{equation}\label{psd-cond-1-case3}
		\widehat A(y)\;\text{is ppsd}\quad \text{and}\quad 
		y_0\in 
		\Big[u A_{\widehat \beta}^+w^T-\sqrt{(A_{\widetilde{\beta}}/A_{\widehat{\beta}}) (\widetilde A/A_{\widehat \beta})},
			u A_{\widehat \beta}^+w^T+\sqrt{(A_{\widetilde{\beta}}/A_{\widehat{\beta}})(\widetilde A/A_{\widehat \beta})}\Big]=:[y_-,y_+].
	\end{equation}
	Moreover,
	\begin{equation}\label{rank-cond-15-07-20}
		\Rank \widehat A(y_0)=
			\left\{\begin{array}{rl}
			\max\big\{\Rank A_{\widetilde \beta}, \Rank \widetilde A\big\},& \text{for}\;y_0\in \{y_-,y_+\},\\
			\max\big\{\Rank A_{\widetilde \beta}, \Rank \widetilde A\big\}+1,& \text{for}\;y_0\in (y_-,y_+).
			\end{array}\right.
	\end{equation}\\

The assumption \eqref{assump-on-ranks} of Lemma \ref{psd-completion} used for 
$\widehat A(y), A_{\widehat \beta}, A_{\widetilde \beta}, \widetilde A$ as $A(x), A_1, A_2, A_3$, respectively, are by the assumption of Claim 2 satisfied and hence Claim 2 follows by Lemma \ref{psd-completion}.\\

\noindent\textbf{Claim 3.} 
	Let $k>2$. Assume $A_{\widehat \beta}\succ 0$ or $\Rank A_{\widehat \beta}=\Rank A_{\widetilde \beta}$.
	Then $A_{\beta(x,y_0)}$ is ppsd for some $y_0\in \RR$ 
		if and only if
	$A_{\beta(x,y)}$ is ppsd, $s^T\in\cC(A_{\widetilde\beta})$  and  \eqref{cond-to-solve-v1606} holds. \\

Note that 
	$A_{\beta(x,y_0)}$  is ppsd 
		if and only if 
	$A_{(\widetilde\beta,\beta_{2k-3},y_0)}\succeq 0$ and $\widehat{A}(y_0)\succeq 0$.
By Theorem \ref{block-psd}, 
$A_{(\widetilde\beta,\beta_{2k-3},y_0)}\succeq 0$ if and only if
\begin{equation}\label{psd-cond-1-case2}
	A_{\widetilde\beta}\succeq 0, \qquad 
	s^T\in\cC(A_{\widetilde\beta})
	\qquad\text{and}\qquad
	A_{(\widetilde\beta,\beta_{2k-3},y_0)}/A_{\widetilde\beta}=y_0-sA_{\widetilde\beta}^{+}s^T\geq 0,
\end{equation}
By Claim 2, $\widehat A(y_0)$ is psd if and only if \eqref{psd-cond-1-case3} holds.
Now note that the first condition of \eqref{psd-cond-1-case3} (which also includes the first condition of \eqref{psd-cond-1-case2}) is equivalent to 
$A_{\beta(x,y)}$ being ppsd and that $y_0$ satisfying the third condition of \eqref{psd-cond-1-case2}
and the second condition of \eqref{psd-cond-1-case3} exists if and only if  \eqref{cond-to-solve-v1606} holds.
This proves Claim 3.\\

First we prove the implication $\eqref{pt1-v1606}\Rightarrow\eqref{pt3-v1606}$.
By Claim 1, in particular $A_{\beta(x,y_0)}$ is ppsd. 

If $k=2$, then 
$A_{(\widetilde \beta,\beta_1,y)}\succeq 0$, which implies that $y_0\geq \frac{\beta_1^2}{\beta_0}$,
and $\widehat A(y_0)\succeq 0$, which implies that $y_0\leq \sqrt{\beta_0\beta_4}$.
Hence, $\frac{\beta_1^2}{\beta_0}\leq \sqrt{\beta_0\beta_4}$, which is \eqref{pt1-14:21-200720}.
Since $A_{\beta(x,y_0)}$ being ppsd implies that also $A_{\beta(x,y)}$ is ppsd, this proves the
implication $\eqref{pt1-v1606}\Rightarrow\eqref{pt3-v1606}$ in this case.

It remains to prove $\eqref{pt1-v1606}\Rightarrow\eqref{pt3-v1606}$ in the case $k>2$. 
We separate two cases according to the invertibility of $A_{\widetilde \beta}$.
\begin{itemize}
  \item $A_{\widetilde \beta}\succ 0$: Using Claim 3, $A_{\beta(x,y)}$ is ppsd, \eqref{cond-to-solve-v1606} and \eqref{case1} holds, which
	proves the implication $\eqref{pt1-v1606}\Rightarrow\eqref{pt3-v1606}$ in this case.
  \item $A_{\widetilde \beta}\not\succ 0$: It follows that $A_{(\widetilde\beta,\beta_{2k-3},y_0)} \not\succ 0$ 
	and hence \eqref{cond2-proof} of Claim 1 must hold. 
	Corollary \ref{rank-theorem-2} used for $(\widetilde\beta,\beta_{2k-3},y_0)$ as $\beta$ implies that
 	  \begin{equation}\label{rank1-15-07-20}
		\Rank A_{\widehat \beta}=\Rank A_{\widetilde \beta}.
	  \end{equation}
	By Proposition \ref{measure-for-subsequence}, 
		$(\widetilde\beta,\beta_{2k-3},y_0)$ also admits a measure and 
	Corollary \ref{singular-case-measure} used for $(\widetilde\beta,\beta_{2k-3},y_0)$ as $\beta$ implies that 
	  \begin{equation}\label{rank2-15-07-20}
		\Rank A_{\widetilde \beta}=\Rank A_{(\widetilde\beta,\beta_{2k-3},y_0)}.
	  \end{equation}
	\eqref{cond2-proof} of Claim 1 together with \eqref{rank1-15-07-20} implies that all the inequalities
	 in the estimate
		$\Rank A_{\widehat \beta}\leq \Rank \widetilde{A}\leq \Rank \widehat A(y_0)$ 
	are equalities and in particular,
 	  \begin{equation}\label{rank3-15-07-20}
		\Rank A_{\widehat \beta}= \Rank \widetilde{A}.
	  \end{equation}
	\eqref{rank1-15-07-20}, \eqref{rank2-15-07-20}, \eqref{rank3-15-07-20} and Claim 3 imply that 
	$A_{\beta(x,y)}$ is ppsd, \eqref{cond-to-solve-v1606} and \eqref{case2} holds,
	which proves the implication $\eqref{pt1-v1606}\Rightarrow\eqref{pt3-v1606}$ in this case.
\end{itemize}

Second we prove the implication $\eqref{pt3-v1606}\Rightarrow\eqref{pt1-v1606}$.
We separate two cases according to $k$.

If $k=2$, then we are in the case \eqref{pt1-14:21-200720}. For $y_0=\sqrt{\beta_0\beta_4}$, 
$\beta(x,y_0)$ is ppsd and satisfies \eqref{cond1-proof-10-07} of Claim 1 if 
$\frac{\beta_1^2}{\beta_0}< \sqrt{\beta_0\beta_4}$ and 
\eqref{cond2-proof} if $\frac{\beta_1^2}{\beta_0}=\sqrt{\beta_0\beta_4}$. In both cases 
Claim 1 implies the implication $\eqref{pt3-v1606}\Rightarrow\eqref{pt1-v1606}$ is true in this case.

Else $k>2$. If \eqref{case1} holds, then in particular 
	$A_{\widehat\beta}\succ 0$.
Otherwise \eqref{case2} holds and in particular 
	$\Rank A_{\widehat{\beta}}=\Rank A_{\widetilde{\beta}}$.
In both cases the assumptions of Claims 2 and 3 are fulfilled.
By Claim 3, the matrix $A_{\beta(x,y_+)}$ is ppsd and by \eqref{rank-cond-15-07-20} of Claim 2, 
$\Rank \widehat A(y_{+})=\max\{\Rank A_{\widetilde \beta}, \Rank \widetilde A\}$.
If \eqref{case1} holds, then $\Rank \widehat A(y_{+})=\Rank A_{\widetilde \beta}=k-1$.
Else \eqref{case2} holds and $\Rank \widehat A(y_{+})=\Rank A_{\widetilde \beta}=\Rank \widetilde A$.
In both cases, $\beta(x,y_+)$ satisfies \eqref{pt1-C1-15-07-20} and \eqref{cond2-proof} of Claim 1 above
and thus the measure exists which proves  the implication $\eqref{pt3-v1606}\Rightarrow\eqref{pt1-v1606}$.

The implication $\eqref{pt2-v1206}\Rightarrow\eqref{pt1-v1206}$ is trivial.

Now we prove the implication $\eqref{pt1-v1206}\Rightarrow\eqref{pt2-v1206}$. 
If $\beta(x,y_0)$ has a representing measure, then:
\begin{itemize}
  \item By Theorem \ref{trunc-Hamb-without-3n-1-bg} it has a $(\Rank (\widetilde\beta,\beta_{2k-3},y_0))$-atomic representing measure.
  \item By Proposition \ref{measure-for-subsequence}, $\widetilde \beta$ and $(\widetilde\beta,\beta_{2k-3},y_0)$ also have measures and 
	hence by Theorem \ref{Hamburger},
		$\Rank A_{\widetilde \beta}=\Rank \widetilde \beta$ 
	and
		$\Rank (\widetilde\beta,\beta_{2k-3},y_0)=\Rank A_{(\widetilde\beta,\beta_{2k-3},y_0)}.$
\end{itemize}
Since $\Rank A_{(\widetilde\beta,\beta_{2k-3},y_0)}\in \{\Rank A_{\widetilde \beta},\Rank A_{\widetilde \beta}+1\}$,
the implication $\eqref{pt1-v1206}\Rightarrow\eqref{pt2-v1206}$ is true. 

It remains to prove the moreover part. We separate two cases according to $k$.
\begin{itemize}
\item If $k=2$, then $\Rank A_{\widetilde \beta}=\Rank (\beta_0)=1$. So 1-atomic measure
exists if and only if $\Rank A_{(\beta_0,\beta_1,y_0)}=\Rank \widehat A(y_0)=1$ for some $y_0$. But from the form of $A_{(\beta_0,\beta_1,y)}$ and $\widehat A(y)$ this is possible only if 
$y_0=\frac{\beta_1^2}{\beta_0}= \sqrt{\beta_0\beta_4}$. Otherwise there is a 2-atomic measure.
\item 
Else $k>2$.
By Proposition \ref{rank-13-07} and \eqref{psd-cond-1-case2} above, 
	$\Rank A_{(\widetilde\beta,\beta_{2k-3},y_0)}=\Rank A_{\widetilde \beta}$ 
if and only if 
	$y_0=sA_{\widetilde{\beta}}^{+}s^T$. 
In the proof of the implication $\eqref{pt3-v1606}\Rightarrow\eqref{pt1-v1606}$ we see that
$\Rank A_{\widetilde  \beta}\geq \Rank \widetilde A$. 
Using this in \eqref{rank-cond-15-07-20} above, it follows that $sA_{\widetilde{\beta}}^{+}s^T$ must be
equal to $y_-$ or $y_+$, which is exactly \eqref{cond-to-solve-v1606-minimally}.
\end{itemize}
This concludes the proof of the theorem.
\end{proof}

The following corollary is a consequence of Theorem \ref{trunc-Hamb-without-3n-2-and-1} and solves the bivariate TMP for the curve $y=x^4$ where also
$\beta_{3,2k-2}$ is given.

\begin{corollary}\label{Y=X4-gen}
	Let 
		$\displaystyle\beta=(\beta_{i,j})_{i,j\in \ZZ^2_+,i+j\leq 2k}$  
	be a 2-dimensional real multisequence of degree $2k$ and let $\beta_{3,2k-2}$ be also given. 
	Suppose $M(k)$ is positive semidefinite and recursively generated.
	Let 
		$$u^{(i)}=(\beta_{0,i},\beta_{1,i},\beta_{2,i},\beta_{3,i})\quad \text{for }i=0,\ldots,2k-1,$$
	  $$\widehat{\beta}:=(u^{(0)},\ldots,u^{(2k-3)},\beta_{0,2k-2},\beta_{1,2k-2},\beta_{2,2k-2})
		\quad\text{and}\quad
		\widetilde{\beta}:=(\widehat{\beta},\beta_{3,2k-2},\beta_{0,2k-1})$$
	be subsequences of $\beta$,
	  $$u:=\left(\begin{array}{cccc}u^{(k)} & \cdots & u^{(2k-1)} &\beta_{1,2k-1}\end{array}\right),\quad
		s:=\left(\begin{array}{ccccc}\beta_{3,k-1} & u^{(k)} & \cdots & u^{(2k-1)} & \beta_{1,2k-1}
				\end{array}\right),$$
	  $$
		w:=\left(\begin{array}{cccccccc}\beta_{2,k-1} & \beta_{3,k-1} & u^{(k)}  & \cdots & 
				u^{(2k-2)} & \beta_{1,2k-2} & \beta_{2,2k-2} & \beta_{3,2k-2}\end{array}\right)
	  $$
	vectors and
		$$\widetilde{A}:=
		\left(\begin{array}{cc} 
			A_{\widehat\beta} & u^T \\ u & 
			\beta_{0,2k}\end{array}\right)$$
	a matrix.
	Then $\beta$ has a representing measure supported on $y=x^4$ if and only if 
		$$sA_{\widetilde{\beta}}^{+}s^T \leq u A_{\widehat{\beta}}^{+}w^T+\sqrt{(A_{\widetilde{\beta}}/
			A_{\widehat{\beta}}) (\widetilde{A}/A_{\widehat{\beta}})}.$$
	one of the following statements hold:
	\begin{enumerate}
	\item One of the following holds: 
		\begin{itemize}
			\item If $k\geq 4$, then $Y=X^4$ is a column relation of $M(k)$. 
			\item If $k=3$, then the equalities 
				$\beta_{0,1}=\beta_{4,0}$, $\beta_{1,1}=\beta_{5,0}$, $\beta_{2,1}=\beta_{6,0}$ hold.
			\item If $k=2$, then the equality $\beta_{0,1}=\beta_{4,0}$ holds.
			\item $k=1$.
		\end{itemize}
	\item One of the following conditions holds:
	\begin{enumerate}
		\item\label{yx4cond1}  $A_{\widetilde{\beta}}\succ 0$.
		\item $A_{\widetilde{\beta}}\succeq 0$ and $\Rank A_{\widehat{\beta}}=\Rank A_{\widetilde{\beta}}=
					\Rank \left(\begin{array}{cc}A_{\widetilde{\beta}}& s^T\end{array}\right)=\Rank \widetilde{A}$.
	\end{enumerate}
	\end{enumerate}
	Moreover, if the representing measure exists, then there is a $(\Rank \widetilde\beta)$-atomic measure if 
	\begin{equation*}
		sA_{\widetilde{\beta}}^{+}s^T \in 
			\left\{u A_{\widehat{\beta}}^{+}w^T- \sqrt{(A_{\widetilde{\beta}}/
			A_{\widehat{\beta}}) (\widetilde{A}/A_{\widehat{\beta}})},
			u A_{\widehat{\beta}}^{+}w^T+ \sqrt{(A_{\widetilde{\beta}}/
			A_{\widehat{\beta}}) (\widetilde{A}/A_{\widehat{\beta}})}\right\}.
	\end{equation*}
	and $(\Rank \widetilde\beta+1)$-atomic otherwise.
\end{corollary}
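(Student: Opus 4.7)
The strategy mirrors the proof of Corollary \ref{Y=X3-gen-bg}: reduce the bivariate TMP on the curve $y=x^4$ to a univariate THMP of degree $8k$ with gaps at positions $8k-2$ and $8k-1$, then apply Theorem \ref{trunc-Hamb-without-3n-2-and-1} with its parameter $k$ replaced by $K:=4k$. Concretely, for $m\in\{0,1,\ldots,8k-3,8k\}$ I would set
$$\widetilde\beta_m:=\beta_{m\Mod{4},\lfloor\frac{m}{4}\rfloor}.$$
The first step is to verify that each $\widetilde\beta_m$ is well-defined, i.e.\ that $m\Mod{4}+\lfloor\frac{m}{4}\rfloor\le 2k$; a case split on $m$ exactly analogous to Claim 1 in the proof of Corollary \ref{Y=X3-gen-bg} handles this. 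In particular $\widetilde\beta_{8k-5}=\beta_{3,2k-2}$ is supplied by the hypothesis, and the two omitted positions $8k-2$ and $8k-1$ are precisely the unknowns $y$ and $x$ in Theorem \ref{trunc-Hamb-without-3n-2-and-1}.

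Next I would establish the measure bijection: atoms $(x_1,x_1^4),\ldots,(x_t,x_t^4)$ with densities $\lambda_1,\ldots,\lambda_t$ form a $(y=x^4)$-representing measure for $\beta$ if and only if the atoms $x_1,\ldots,x_t$ with the same densities form an $\RR$-representing measure for the univariate sequence $\widetilde\beta(x,y):=(\widetilde\beta_0,\ldots,\widetilde\beta_{8k-3},y,x,\widetilde\beta_{8k})$. The "if" direction combines the direct calculation $\sum_\ell\lambda_\ell x_\ell^i(x_\ell^4)^j=\sum_\ell\lambda_\ell x_\ell^{4j+i}=\widetilde\beta_{4j+i}$ with the iterated reduction $\beta_{i,j}=\beta_{i\Mod{4},j+\lfloor i/4\rfloor}=\widetilde\beta_{4j+i}$ obtained from the column relation $Y=X^4$ (for $k\ge 4$) or the moment equalities listed in the first bullet (for $k\in\{1,2,3\}$). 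The "only if" direction is immediate by pushing a $(y=x^4)$-supported measure forward to the first coordinate.

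With this translation in hand, the conditions of Theorem \ref{trunc-Hamb-without-3n-2-and-1} applied to $\widetilde\beta(x,y)$ (with parameter $K=4k$) correspond term by term to the conditions of the Corollary: the vectors $u,s,w$ and matrices $\widehat\beta,\widetilde\beta,\widetilde A$ named in the Corollary are the theorem-objects rewritten via the dictionary $\widetilde\beta_m\leftrightarrow\beta_{m\Mod{4},\lfloor m/4\rfloor}$, and partial positive semidefiniteness of $A_{\widetilde\beta(x,y)}$ is equivalent to $M(k)\succeq 0$ together with the assumed column relation, via the identification $X^a\leftrightarrow X^{a\Mod{4}}Y^{\lfloor a/4\rfloor}$ of columns. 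Invoking the Bayer-Teichmann theorem \cite{BT06} upgrades existence of a representing measure to existence of a finitely atomic one, and the atom-count dichotomy in the "moreover" clause of the Corollary is inherited verbatim from the "moreover" clause of Theorem \ref{trunc-Hamb-without-3n-2-and-1}.

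The main obstacle is bookkeeping rather than ideas. One must carefully verify the index translations so that the theorem-side vectors and matrices coincide entry-by-entry with the ones defined in the Corollary, and separately handle the degenerate small-$k$ cases (in particular $k\le 3$) where $Y=X^4$ cannot be expressed as a genuine column relation of $M(k)$ and must instead be encoded by the listed moment equalities. Once these identifications are in place the equivalence is a mechanical translation of Theorem \ref{trunc-Hamb-without-3n-2-and-1}.
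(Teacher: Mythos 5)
Your proposal follows essentially the same route as the paper's proof: defining $\widetilde\beta_m:=\beta_{m\Mod{4},\lfloor m/4\rfloor}$ for $m\in\{0,\ldots,8k-3,8k\}$ (with $\widetilde\beta_{8k-5}=\beta_{3,2k-2}$ supplied as the extra datum), checking well-definedness, establishing the atom correspondence between $(x_\ell,x_\ell^4)$ on the curve and $x_\ell$ on $\RR$ via the recursive generation of $M(k)$, and then combining Bayer--Teichmann with Theorem \ref{trunc-Hamb-without-3n-2-and-1}. This matches the paper's argument in both structure and detail, so the proposal is correct.
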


\begin{proof}
	For $m\in \{0,1\ldots,8k-3,8k\}$ we define the numbers $\widetilde \beta_m$ by the following rule
		\begin{equation*}
			\widetilde \beta_m:=\beta_{m\Mod{4},\lfloor\frac{m}{4}\rfloor}.
		\end{equation*}
	\noindent \textbf{Claim 1.} Every number $\widetilde \beta_m$ is well-defined.\\

	We will prove that $m\Mod{4}+\lfloor\frac{m}{4}\rfloor\leq 2k$ 
	if $m\neq 8k-5$, while for $m=8k-5$ we have $\widetilde\beta_{8k-5}=\beta_{3,2k-2}$.
	We separate three cases according to $m$.
	\begin{itemize}
		\item $m< 8k-8$:  $\lfloor\frac{m}{4}\rfloor+m\Mod{4}\leq (2k-3)+3=2k$.
		\item $m\in \{8k-8,8k-7,8k-6\}$:
			$\lfloor\frac{m}{4}\rfloor+m\Mod{4}\leq (2k-2)+2=2k$.
		\item $m\in \{8k-4,8k-3\}$:
			$\lfloor\frac{m}{4}\rfloor+m\Mod{4}\leq (2k-1)+1=2k$.
		\item $m=8k$: $\lfloor\frac{m}{4}\rfloor+m\Mod{3}= 2k+0=2k.$\\
	\end{itemize}
	
	\noindent \textbf{Claim 2.} 
		Let $t\in \NN$. 
		The atoms $(x_1,x_1^4),\ldots (x_t,x_t^4)$ with densities $\lambda_1,\ldots,\lambda_t$
		are the $(y-x^4)$-representing measure for $\beta$ and $\beta_{3,2k-2}$
		if and only if
		the atoms $x_1,\ldots,x_t$ with densities $\lambda_1,\ldots,\lambda_t$
		are the $\RR$-representing measure for 
		 $\widetilde \beta(x,y)=(\widetilde \beta_0,\ldots,\widetilde \beta_{2k-2},y,x,\widetilde\beta_{2k})$.\\

	The if part follows from the following calculation:
		$$\widetilde \beta_{m}
			=\beta_{m\Mod{4},\lfloor\frac{m}{4}\rfloor}
			=\sum_{\ell=1}^t \lambda_\ell x_\ell^{m\Mod{4}}x_{\ell}^{4\lfloor\frac{m}{4}\rfloor}
			=\sum_{\ell=1}^t \lambda_\ell x_\ell^{m\Mod{4}+4\lfloor\frac{m}{4}\rfloor}
			=\sum_{\ell=1}^t \lambda_\ell x_\ell^{m},$$
	where $m=0,\ldots,8k-3,8k$.

	The only if part follows from the following calculation for $i+j\leq 2k$:
	\begin{align*}
	  \beta_{i,j}
		&= \beta_{i-4,j+1}=\cdots=\beta_{i\Mod{4},j+\lfloor\frac{i}{4}\rfloor}\\
		&=\widetilde \beta_{i\Mod{4}+4(j+\lfloor\frac{i}{4}\rfloor)}
		  = \sum_{\ell=1}^t \lambda_\ell x_\ell^{i\Mod{4}+4(j+\lfloor\frac{i}{4}\rfloor)}
		  =\sum_{\ell=1}^t \lambda_\ell x_\ell^{i\Mod{4}+4\lfloor\frac{i}{4}\rfloor} x_{\ell}^{4j}
		  =\sum_{\ell=1}^t \lambda_\ell x_\ell^{i}(x_{\ell}^4)^{j},
	\end{align*}
	where the equalities in the first line follow by $M(k)$ being rg, and
	$$\beta_{3,2k-2}=\widetilde\beta_{8k-5}
		  =\sum_{\ell=1}^t \lambda_\ell x_\ell^{8k-5}
		  =\sum_{\ell=1}^t \lambda_\ell x_\ell^{3}(x_{\ell}^4)^{2k-2}.$$

	Using Claim 2 and a theorem of Bayer and Teichmann \cite{BT06}, implying that if a finite sequence 
	has a $K$-representing measure, then it has a finitely atomic $K$-representing measure, the statement of the 	
	Corollary follows by Theorem \ref{trunc-Hamb-without-3n-2-and-1}.
\end{proof}

\section{Truncated Hamburger moment problem of degree $2k$ with gap(s) $(\beta_{1})$, 
	$(\beta_{1},\beta_{2})$}
\label{S4}

In this section we solve the THMP of degree $2k$ with gaps $(\beta_{1})$ 
	(see Theorem \ref{trunc-Hamb-without-1})
and $(\beta_{1},\beta_{2})$ 
	(see Theorem \ref{trunc-Hamb-without-1-2}). 
As a corollary of Theorem \ref{trunc-Hamb-without-1} 
we obtain the solution to the TMP for the curve $y^2=x^3$ (see Corollary \ref{Y2=X3-general}),
while as a corollary of Theorem \ref{trunc-Hamb-without-1-2} 
we get the solution to the TMP for the curve $y^3=x^4$ and an additional moment $\beta_{\frac{5}{3},0}$ given
(see Corollary \ref{Y3=X4-gen}).

\subsection{Truncated Hamburger moment problem of degree $2k$ with gaps $(\beta_1)$}


\setlength{\columnsep}{-3cm}

\begin{theorem}\label{trunc-Hamb-without-1} 
 Let $k\in \NN$, $k>1$, and 
	$$\beta(x):=(\beta_0,x,\beta_2,\ldots,\beta_{2k})$$ 
  be a sequence where each $\beta_i$ is a real number, $\beta_0>0$ and $x$ is a variable. 
  Let 
	$$\widehat{\beta}:=(\beta_2,\ldots,\beta_{2k-2}),\quad
		\widetilde{\beta}:=(\beta_2,\ldots,\beta_{2k}),\quad
			\overline{\beta}:=(\beta_4,\ldots,\beta_{2k-2})\quad\text{and}\quad
				\widebreve{\beta}:=(\beta_4,\ldots,\beta_{2k})$$
  be subsequences of $\beta(x)$,
	$$v:=\left(\begin{array}{ccc}\beta_{2} & \cdots & \beta_{k-1}\end{array}\right)\quad\text{and}\quad
		u:=\left(\begin{array}{ccc}\beta_{2} & \ldots & \beta_{k}\end{array}\right)$$
  vectors, and
	$$\widetilde{A}:=\left(\begin{array}{cc} \beta_0 & v \\ v^T & A_{\overline{\beta}} \end{array}\right)
		\quad\text{and}\quad
			\widehat{A}:=\left(\begin{array}{cc} \beta_0 & u \\ u^T & A_{\widebreve{\beta}} \end{array}\right)$$
  matrices.
  Then the following statements are equivalent: 
\begin{enumerate}	
	\item\label{pt1-v1906} There exists $x_0\in \RR$ and a representing measure for $\beta(x_0)$ supported on $K=\RR$.
	\item\label{pt2-v1906} There exists $x_0\in \RR$ and a 
		$(\Rank \widetilde\beta)$ or 
		a $(\Rank \widetilde\beta+1)$-atomic representing measure for $\beta(x_0)$.
	\item\label{pt3-v1906} $A_{\beta(x)}$ is partially positive semidefinite and one of the following conditions is true: 
		\begin{enumerate}
			\item\label{pt3a-v1906} 
			  \begin{enumerate}
				\item $k=2$ and $A_{\widetilde{\beta}}\succ 0$.
				\item $k>2$, $A_{\widetilde{\beta}}\succ 0$ and $\widetilde{A}\succ 0$.
			  \end{enumerate}
			\item\label{pt3b-v1906} 
				$\Rank A_{\widehat\beta}=\Rank A_{\widetilde \beta}=\Rank A_{\breve \beta}.$
		\end{enumerate}
\end{enumerate}

Moreover, if the representing measure exists, then there does not exist a $(\Rank \widetilde\beta)$-atomic measure if and only if 
	\eqref{pt3b-v1906} holds and $\Rank A_{\widehat\beta}<\Rank \widehat A$.
\end{theorem}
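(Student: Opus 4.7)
The plan is to reduce to Lemma~\ref{psd-completion} via a reversal trick. Let $P$ denote the $(k+1) \times (k+1)$ permutation matrix reversing indices; then $R(x) := P^T A_{\beta(x)} P$ has the unknown entry at position $(k, k+1)$, matching the template of Lemma~\ref{psd-completion} with $A_1 = P^T A_{\breve\beta} P$, $A_2 = P^T A_{\widetilde\beta} P$, and $A_3 = P^T \widehat A P$; ranks and definiteness are preserved by the conjugation. A preliminary observation that streamlines the proof: $A_{\beta(x)}$ is partially positive semidefinite if and only if $A_{\widetilde\beta} \succeq 0$ and $\widehat A \succeq 0$, since every fully specified principal minor of $A_{\beta(x)}$ either avoids row/column~1 (giving a minor of $A_{\widetilde\beta}$) or avoids row/column~2 (giving a minor of $\widehat A$).

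For the implication $(3) \Rightarrow (2)$, assumption~\eqref{assump-on-ranks} of Lemma~\ref{psd-completion} holds in both sub-cases: (3a) forces $A_{\breve\beta} \succ 0$ as a principal submatrix of $A_{\widetilde\beta}$ so $A_1$ is invertible, while (3b) directly gives $\Rank A_1 = \Rank A_2$. The lemma then produces an interval $[x_-, x_+]$ of valid $x_0$, with $\Rank A_{\beta(x_0)} = \max\{\Rank A_{\widetilde\beta}, \Rank \widehat A\}$ at the endpoints and one more in the interior. In case~(3a), choosing an interior $x_0$ makes $A_{\beta(x_0)}$ positive definite, so Theorem~\ref{Hamburger} delivers a $(k+1)$-atomic measure ($k = 2$ is handled identically). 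In case~(3b), I would choose $x_0 \in \{x_-, x_+\}$; applying Corollary~\ref{rank-theorem-2} and its reversed counterpart Corollary~\ref{rank-theorem-3} to the sequence $\widetilde\beta$ yields the equality $\Rank A_{\beta(x_0)} = \Rank A_{\beta(x_0)}(k-1)$, so Corollary~\ref{singular-case-measure} produces the measure.

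For the implication $(1) \Rightarrow (3)$, ppsd is immediate from $A_{\beta(x_0)} \succeq 0$. If $A_{\widetilde\beta}$ is singular, Proposition~\ref{measure-for-subsequence} transfers the measure to $\widetilde\beta$, and Corollaries~\ref{rank-theorem-2} and~\ref{rank-theorem-3} (applied to $\widetilde\beta$ and its reversal) force $\Rank A_{\widehat\beta} = \Rank A_{\widetilde\beta} = \Rank A_{\breve\beta}$, yielding (3b). The delicate case is $A_{\widetilde\beta}$ invertible: here (3b) necessarily fails since $\Rank A_{\widehat\beta} \leq k-1 < k = \Rank A_{\widetilde\beta}$, so we must derive $\widetilde A \succ 0$ (for $k > 2$). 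I would argue by contradiction: if $\widetilde A$ were singular with kernel vector $w$, Lemma~\ref{extension-principle} extends $w$ to a kernel vector $\widehat w$ of $A_{\beta(x_0)}$ supported on positions $\{1, 3, 4, \ldots, k\}$, encoding the column relation $p(X) = 0$ with $p(X) = w_1 + w_2 X^2 + \cdots + w_{k-1} X^{k-1}$. Since the measure forces $A_{\beta(x_0)}$ to be recursively generated, $(Xp)(X) = 0$ also holds, giving a linear relation among columns $2, 4, 5, \ldots, k+1$; restricting this relation to rows $\{2, \ldots, k+1\}$ produces a nontrivial linear dependence among the columns of $A_{\widetilde\beta}$, contradicting invertibility. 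Combined with the trivial $(2) \Rightarrow (1)$ and the atomic-count bound $\Rank A_{\beta(x_0)} \in \{\Rank A_{\widetilde\beta}, \Rank A_{\widetilde\beta} + 1\}$ (since $A_{\widetilde\beta}$ is a codimension-one principal submatrix of $A_{\beta(x_0)}$), the three-way equivalence follows.

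The ``moreover'' claim is bookkeeping from the rank formula in Lemma~\ref{psd-completion}: the minimum rank of $A_{\beta(x_0)}$ over valid $x_0$ equals $\max\{\Rank A_{\widetilde\beta}, \Rank \widehat A\}$, so a $(\Rank \widetilde\beta)$-atomic measure exists if and only if $\Rank \widehat A \leq \Rank A_{\widetilde\beta}$. In case~(3a) this holds automatically since $\Rank A_{\widetilde\beta} = k$; in case~(3b) the condition becomes $\Rank \widehat A \leq \Rank A_{\widehat\beta}$, so failure is equivalent to $\Rank A_{\widehat\beta} < \Rank \widehat A$. The main obstacle I anticipate is the $(1) \Rightarrow (3)$ step in the sub-case where $A_{\widetilde\beta}$ is invertible and $\widetilde A$ is singular: the psd structure alone does not preclude this combination, and one must genuinely invoke the recursive-generation property of moment matrices, via the extension principle, to rule it out.
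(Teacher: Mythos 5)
Your overall architecture matches the paper's (reduce to Lemma~\ref{psd-completion} after permuting the unknown entry to the bottom-right corner, with $A_{\breve\beta}$, $A_{\widetilde\beta}$, $\widehat A$ in the roles of $A_1,A_2,A_3$), and your proof of $(1)\Rightarrow(3)$ is correct; your contradiction argument in the subcase $A_{\widetilde\beta}\succ 0$ (extension principle plus recursive generation to manufacture a dependence in $A_{\widetilde\beta}$) is a legitimate alternative to the paper's shorter route, which just observes that $\Rank A_{\beta(x_0)}=\Rank A_{\beta(x_0)}(k-1)$ forces the $k\times k$ corner, and hence its principal submatrix $\widetilde A$, to be positive definite. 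The genuine gaps are in $(3)\Rightarrow(2)$ and the moreover part, and they share one root cause: a psd completion of minimal rank does not by itself admit a representing measure --- you still need the Hankel condition $\Rank A_{\beta(x_0)}=\Rank A_{\beta(x_0)}(k-1)$ of Corollary~\ref{singular-case-measure}. Concretely: (i) in case~\eqref{pt3a-v1906} you pick an interior point of $[x_-,x_+]$, but \eqref{pt3a-v1906} does not assert $\widehat A\succ 0$; if $\widehat A$ is singular then $\widehat A/A_{\breve\beta}=0$ by Proposition~\ref{rank-13-07}, so $x_-=x_+$ and there is no interior point (this occurs for $\beta^{(4)}$ in Example~\ref{ex-v2}). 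In that subcase one must take $x_0=x_\pm$ and prove the rank condition, which is exactly where the hypothesis $\widetilde A\succ 0$ enters: it guarantees the \emph{last} column of the singular $\widehat A$ lies in the span of the others, and Lemma~\ref{extension-principle} transfers this relation to $A_{\beta(x_\pm)}$. Your sufficiency argument never uses $\widetilde A\succ 0$ --- a decisive red flag, since ppsd together with $A_{\widetilde\beta}\succ 0$ alone is \emph{not} sufficient ($\beta^{(2)}$ in Example~\ref{ex-v2}). (ii) In case~\eqref{pt3b-v1906}, Corollaries~\ref{rank-theorem-2} and~\ref{rank-theorem-3} applied to $\widetilde\beta$ only relate ranks of nested corners of $A_{\widetilde\beta}$; they do not yield $\Rank A_{\beta(x_\pm)}=\Rank A_{\beta(x_\pm)}(k-1)$. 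Again you need the extension principle to lift the column relation of $A_{\widetilde\beta}$ into $A_{\beta(x_\pm)}$.

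The moreover part suffers the same defect in sharper form. You claim a $(\Rank\widetilde\beta)$-atomic measure exists iff $\max\{\Rank A_{\widetilde\beta},\Rank\widehat A\}=\Rank\widetilde\beta$, and that in case~\eqref{pt3a-v1906} this ``holds automatically.'' But when $A_{\widetilde\beta}\succ 0$ and $\widehat A\succ 0$, both endpoint completions $A_{\beta(x_\pm)}$ are singular psd of rank $k$, and whether either one satisfies $\Rank A_{\beta(x_\pm)}=\Rank A_{\beta(x_\pm)}(k-1)$ is precisely the open issue --- a rank-$k$ psd Hankel matrix of size $k+1$ need not admit a measure. The paper's proof of the moreover statement is devoted to this point: it writes $f(x)=A_{\beta(x)}/A_{\widetilde\beta}=g(x)-\tfrac{1}{u}h(x)^2$ via the quotient formula (Proposition~\ref{Schur3by3-ver2}), where $g(x)=A_{(\beta_0,x,\widehat\beta)}/A_{\widehat\beta}$ and $h$ is linear, and shows $g(x_-)$ and $g(x_+)$ cannot both vanish (otherwise $h\equiv 0$, which would force $A_{\widehat\beta}$ singular, contradicting $A_{\widetilde\beta}\succ 0$); hence at least one endpoint has an invertible $k\times k$ corner and Corollary~\ref{singular-case-measure} applies. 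Without an argument of this kind your bookkeeping does not establish the moreover claim.
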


\begin{proof}
First we prove the implication $\eqref{pt1-v1906}\Rightarrow\eqref{pt3-v1906}$.
By Theorem \ref{Hamburger}, $A_{\beta(x_0)}\succeq 0$ and $\Rank A_{\beta(x_0)}=\Rank\beta(x_0)$.
The condition $A_{\beta(x_0)}\succeq 0$ implies that $A_{\beta(x)}$ is ppsd. 
We separate two cases according to the invertibility of $A_{\widetilde\beta}$.
\begin{itemize}
  \item $A_{\widetilde\beta}\succ 0$: Since $A_{\widetilde\beta}$ is a principal submatrix of $A_{\beta(x_0)}$, we conclude that 
	$\Rank A_{\beta(x_0)}\geq \Rank A_{\widetilde \beta}=k$, and hence $A_{\beta(x_0)}$ is either invertible or 
	$\Rank A_{\beta(x_0)}$ is singular and by Corollary \ref{singular-case-measure} used for $\beta(x_0)$ as $\beta$, 
	$\Rank A_{\beta(x_0)}=\Rank A_{(\beta_0,x_0,\widehat\beta)}$.
	In both cases 
		$$A_{(\beta_0,x_0,\widehat\beta)}=
			\left\{\begin{array}{ll}
			\left(\begin{array}{cc}
				\beta_{0} & x_0\\
				x_0  & \beta_2 
			\end{array}\right),&\text{if }k=2,\\
			\left(\begin{array}{ccc}
				\beta_{0} & x_0 & v\\
				x_0  & \beta_2 & v_1\\
				v^T & v_1^T& A_{\overline{\beta}}
			\end{array}\right)\quad \text{where}\quad v_1=\left(\begin{array}{ccc}\beta_{3} & \cdots & \beta_{k}\end{array}\right),& \text{if }k>2,\end{array}\right.,$$ 
	is invertible. If $k>2$, $\widetilde A$ is a principal submatrix of $A_{(\beta_0,x_0,\widehat\beta)}$ and
	 it follows that $\widetilde A\succ 0$.
	Hence, \eqref{pt3a-v1906} holds. Together with $A_{\beta(x)}$ being ppsd, proves the implication $\eqref{pt1-v1906}\Rightarrow\eqref{pt3-v1906}$ in this case.
  \item $A_{\widetilde{\beta}}$ is singular: Since $\widetilde \beta$ is a subsequence of $\beta(x_0)$ of the form from Proposition \ref{measure-for-subsequence} with
	$i=1, j=k$, it admits a measure. By Corollary \ref{singular-case-measure} used for $\widetilde\beta$ as $\beta$, it follows that 
		\begin{equation}\label{rank-12:46-15-07-20}
			\Rank A_{\widetilde \beta}=\Rank A_{\widehat \beta}.
		\end{equation}
	By Corollary \ref{rank-theorem-3} used for $\beta(x_0)$ as $\beta$, it follows that 
		\begin{equation}\label{rank-12:48-15-07-20}
			\Rank A_{\widetilde \beta}=\Rank A_{\breve\beta}.
		\end{equation}		
	Hence, \eqref{rank-12:46-15-07-20} and \eqref{rank-12:48-15-07-20} imply that \eqref{pt3b-v1906} holds.
	Together with $A_{\beta(x)}$ being ppsd, proves the implication $\eqref{pt1-v1906}\Rightarrow\eqref{pt3-v1906}$ in this case.
\end{itemize}

Second we prove the implication $\eqref{pt3-v1906}\Rightarrow\eqref{pt2-v1906}$.
Let $P_1:\RR^{k+1}\to \RR^{k+1}$ be the following permutation matrix	
	$$P_1=
		\left(\begin{array}{ccc}
			\bf{0} & 0 & 1\\
			\bf{0} & 1 & 0\\
			I_{k-1} & 0 & 0
		\end{array}\right),$$
where $\bf{0}$ stands for the row of $k-2$ zeros and $I_{k-1}$ is the identity matrix of size $k-1$.
Then 
	$P_1^TA_{\beta(x)}P_1$ is of the form 
  $$
	P_1^TA_{\beta(x)}P_1=
	\left(\begin{array}{ccc}
			A_{\widebreve \beta}& w^T & u^T \\ 
			w & \beta_{2} & x \\ 
			u & x & \beta_0 \\\end{array}\right),$$
where $w=\left(\begin{array}{ccc} \beta_3 & \cdots & \beta_{k+1}\end{array}\right)$ is a vector.\\

\noindent \textbf{Claim.} $A_{\beta(x_0)}$ is psd if and only if 
	$$x_0\in 
		\left[
			uA_{\breve\beta}^+w^T-\sqrt{(A_{\widetilde \beta}/A_{\widebreve \beta})(\widehat A/A_{\widebreve\beta})},
			uA_{\breve\beta}^+w^T+\sqrt{(A_{\widetilde \beta}/A_{\widebreve \beta})(\widehat A/A_{\widebreve\beta})}
		\right]=:[x_-,x_+].$$
	Moreover,
	  \begin{equation}\label{rank-eq-10-07}
		\Rank A_{\beta(x_0)}:=
		  \left\{\begin{array}{rr}
			\max\{\Rank A_{\widetilde \beta},\Rank \widehat A\},& \text{if }x_0\in \{x_-,x_+\},\\
			\max\{\Rank A_{\widetilde \beta},\Rank \widehat A\}+1,& \text{if }x_0\in (x_-,x_+).
		  \end{array}\right.
	\end{equation}\\

Denoting the matrices 
	$$
	\cA:=\left(\begin{array}{cc}A_{\breve \beta} & w^T \\ w & \beta_{2}\end{array}\right)
		\quad\text{and}\quad
	\cB:=\left(\begin{array}{cc}A_{\breve \beta} & u^T \\ u & \beta_{0}\end{array}\right),
	$$ 
and the permutation matrix $P_2:\RR^k\to \RR^k$ by
	$$P_2=
		\left(\begin{array}{cc}
			\bf{0} &  1\\
			I_{k-1} & 0 
		\end{array}\right),$$
where $\bf{0}$ stands for the row of $k-1$ zeroes and $I_{k-1}$ the identity matrix of size $k-1$, we have that
	$$\cA=P_2^TA_{\widetilde \beta}P_2 \quad \text{and} \quad \cB=P_2^T \widehat A P_2.$$
In particular, 
	\begin{equation}\label{rank-14:04-15-07-20}
		\Rank \cA=\Rank A_{\widetilde \beta} \quad\text{and}\quad \Rank \cB=\Rank  \widehat A.
	\end{equation}
If \eqref{pt3a-v1906} holds, then $A_{\widetilde \beta}\succ 0$ implies that $A_{\breve \beta}\succ 0$.
If \eqref{pt3b-v1906} holds, then in particular $\Rank A_{\breve \beta}=\Rank A_{\widetilde \beta}=\Rank \cA$.
Hence, the assumption \eqref{assump-on-ranks} of Lemma \ref{psd-completion} used for $P_1^TA_{\beta(x)}P_1, A_{\breve \beta}, \cA,\cB$ as $A(x), A_1, A_2, A_3$, respectively,
is satisfied and using also  
	$\cA/A_{\breve\beta}=A_{\widetilde \beta}/A_{\breve\beta}$ and 
	$\cB/A_{\breve\beta}=\widehat A/A_{\breve\beta}$, Claim follows.\\

First assume that \eqref{pt3a-v1906} holds. We separate two cases according to the inverbility of $\widehat A$.
\begin{itemize}
	\item $\widehat{A}\succ 0$: 
		From $A_{\widetilde \beta}\succ 0$ and $\widehat{A}\succ 0$ it follows, using Proposition \ref{rank-13-07}, that 
		$A_{\widetilde \beta}/A_{\widebreve \beta}>0$ and $\widehat A/A_{\widebreve\beta}>0$.
		Hence by the definition of $x_{\pm}$, we have $x_-<x_+$ and by Claim, $A_{\beta(x_0)}\succ 0$ for $x_0\in (x_-,x_+)$.
		By Theorem \ref{Hamburger}, $(\Rank \beta(x_0))=(\Rank \widetilde\beta+1)$-atomic representing measure for $\beta(x_0)$ exists,
		which proves the implication $\eqref{pt3-v1906}\Rightarrow\eqref{pt2-v1906}$ in this case.
	\item $\widehat{A}$ is singular: From $A_{\widetilde \beta}\succ 0$ it follows that $A_{\breve \beta}\succ 0$. Since $\widehat A$ is singular, 
		 Proposition \ref{rank-13-07} implies that $\widehat A/A_{\widebreve\beta}=0$, and hence by the definition of $x_{\pm}$, we have $x_-=x_+$. 
		By Claim, $A_{\beta(x_{\pm})}\succeq 0$ with $\Rank A_{\beta(x_{\pm})}=\Rank A_{\widetilde \beta}$.
		We separate two cases according to $k$.
		\begin{itemize}
		\item $k=2$: Since $\widehat A=
		\left(\begin{array}{cc}
			\beta_0 & \beta_2\\
			\beta_2	& \beta_{4}	
		\end{array}\right)$ and $\beta_0>0$, it follows that the second (also the last) column of 
			$\widehat A$ is in the span of the first (also the others) one. 
		\item $k>2$: By assumptions $\widetilde A\succ 0$ and 
		$\widehat A=
		\left(\begin{array}{cc}
			\widetilde A & u_1^T\\
			u_1	& \beta_{2k}	
		\end{array}\right)$ 
		being singular, 
		where the $u_1$ is equal to $u_1=\left(\begin{array}{cccc} \beta_{k} & \beta_{k+2} & \cdots & \beta_{2k-1}\end{array}\right)$, 
		it follows that the last column of $\widehat A$ is in the span of the others.
		\end{itemize}
			By Lemma \ref{extension-principle}, the last column of $A_{\beta(x_{\pm})}$ is also in the span of the others
		and by Corollary \ref{singular-case-measure}, we have that $(\Rank \beta(x_\pm))=(\Rank \widetilde\beta)$-atomic representing measure for $\beta(x_\pm)$ exists,
		which proves the implication $\eqref{pt3-v1906}\Rightarrow\eqref{pt2-v1906}$ in this case.
\end{itemize}

Otherwise \eqref{pt3b-v1906} holds. Proposition \ref{rank-13-07} implies that 
	$A_{\widetilde \beta}/A_{\widehat\beta}=0$, and hence by the definition of $x_{\pm}$, we have $x_-=x_+$.
	By Claim, $A_{\beta(x_{\pm})}\succeq 0$.
	The assumption $\Rank A_{\widetilde \beta}=\Rank A_{\widehat \beta}$, also implies that 
	the last column of 
	$A_{\widetilde \beta}=
		\left(\begin{array}{cc}
			A_{\widehat \beta} & u_2^T\\
			u_2	& \beta_{2k}	
		\end{array}\right)$,
	where $u_2=\left(\begin{array}{ccc} \beta_{k} & \cdots & \beta_{2k-1}\end{array}\right)$,  
	is in the span of the others.
	By Lemma \ref{extension-principle}, the last column of $A_{\beta(x_{\pm})}$ is in the span of the others. 
Hence, by Corollary \ref{singular-case-measure},
$(\Rank \beta(x_{\pm}))$-atomic measure for $\beta(x_\pm)$ exists.
Since $\widetilde \beta$ is a subsequence of $\beta(x_0)$ of the form from Proposition \ref{measure-for-subsequence} with
	$i=1, j=k$, it admits a measure and hence Theorem \ref{Hamburger} implies that $\Rank A_{\widetilde \beta}=\Rank \widetilde \beta$. 
From \eqref{rank-eq-10-07}, it follows that:
\begin{itemize}
	\item If $\Rank \widehat A\leq \Rank A_{\widetilde\beta}$, then $\Rank \beta(x_{\pm})=\Rank A_{\widetilde\beta}=\Rank \widetilde\beta$.
	\item Else $\Rank \widehat A= \Rank A_{\widetilde\beta}+1$ and 
		$\Rank \beta(x_{\pm})=\Rank \widehat A=\Rank \widetilde\beta+1.$
\end{itemize}
This proves the implication $\eqref{pt3-v1906}\Rightarrow\eqref{pt2-v1906}$ in this case.

The implication $\eqref{pt2-v1906}\Rightarrow\eqref{pt1-v1906}$ is trivial.

It remains to prove the moreover part. Observe that in the proof of the implication 
$\eqref{pt3-v1906}\Rightarrow\eqref{pt2-v1906}$, $(\Rank \widetilde\beta)$-atomic measure might not
exist if \eqref{pt3a-v1906} holds with $\widehat A\succ 0$ and 
does not exist if \eqref{pt3b-v1906} holds with $\Rank A_{\widetilde \beta}<\Rank \widehat A$.
We will prove that in the first case there always exists a $(\Rank \widetilde\beta)$-atomic measure.
Assume that $A_{\widetilde \beta}\succ 0$ and $\widehat A\succ 0$.
We will prove that one of $A_{\beta(x_\pm)}$ or $A_{\beta(x_+)}$ satisfies
	\begin{equation}\label{21:44-15-07-20}
		\Rank A_{\beta(x_\pm)}=\Rank A_{\beta(x_\pm)}(k-1),
	\end{equation}
and hence by Corollary \ref{singular-case-measure},
a $(\Rank \beta(x_\pm))=(\Rank \widetilde\beta)$-atomic measure exists.
Using Proposition \ref{Schur3by3-ver2} with for $A_{\beta(x)}, A_{\widetilde \beta}, A_{\widehat \beta}$ as $K,N,C$, respectively, 
and denoting $u:=A_{\widetilde \beta}/A_{\widehat \beta}$, 
we have that
	$$
	f(x):=A_{\beta(x)}/A_{\widetilde \beta}
		=\left(\beta_0-e(x) A_{\widehat \beta}^{-1} e(x)^T\right)-
			\frac{1}{u}\left(\beta_{k}-e(x) A_{\widehat \beta}^{-1} z^T\right)^2
		=:g(x)-\frac{1}{u}h(x)^2,
	$$
where 
	$e(x):=\left(\begin{array}{cccc} x & \beta_2 & \cdots & \beta_{k-1}\end{array}\right)$ 
and 
	$z:=\left(\begin{array}{ccc} \beta_{k+1} & \cdots & \beta_{2k-1}\end{array}\right)$.
From the proof of the implication $\eqref{pt3-v1906}\Rightarrow\eqref{pt2-v1906}$, we know that $x_-<x_+$ 
and 
	\begin{equation}\label{zeros-21:39-15-07-20}
		f(x_-)=f(x_+)=0.	
\end{equation} 
Note that $g(x)=A_{(\beta_0,x,\widehat\beta)}/A_{\widehat \beta}$.
If 
	\begin{equation}\label{zeros-21:33-15-07-20}
		g(x_-)=g(x_+)=0,
	\end{equation} 
then $h(x_-)=h(x_+)=0$. But $h(x)$ is a linear function in $x$, so this is possible 
only if $h(x)=0$ for every $x\in \RR$.
This is possible only if
  \begin{equation}\label{singular-case}
		A_{\widehat \beta}^{-1} z^T=\left(\begin{array}{cccc} 0 & b_2 & \cdots & b_{k-1} \end{array}\right)^T\quad \text{for some}\;b_2,\ldots,b_k\in\RR 
		\qquad \text{and} \qquad
		\beta_k=\sum_{i=2}^{k-1} \beta_ib_i.
  \end{equation}
We write $(A_{\beta(x)})|_{S_1,S_2}$ for the restriction of $A_{\beta(x)}$ to rows from $S_1$ and columns from $S_2$.
Since $A_{\beta(x)}$ is a Hankel matrix, we have
	$$(A_{\beta(x)})_{\{1,\ldots,X^{k-1}\},\{X,\ldots,X^{k}\}}=(A_{\beta(x)})_{\{X,\ldots,X^k\},\{1,\ldots,X^{k-1}\}},$$
which is equal to 
		$$\left(\begin{array}{cc}
			 e(x) & \beta_{k}\\
			A_{\widehat \beta}  & z^T
			\end{array}\right)=
		\left(\begin{array}{cc}
			e(x)^T & A_{\widehat \beta}  \\
			\beta_{k}  & z
			\end{array}\right).$$
\eqref{singular-case} implies that the last column of $(A_{\beta(x)})_{\{1,\ldots,X^{k-1}\},\{X,\ldots,X^{k}\}}$ is in the span of the columns $2,\ldots,k-1$.  
From $(A_{\beta(x)})_{\{X,\ldots,X^k\},\{1,\ldots,X^{k-1}\}}$ this in particular implies that the last column of $A_{\widehat \beta}$
is in the span of the others and $A_{\widehat \beta}$ is singular, which is a contradiction with the assumption $A_{\widetilde \beta}\succ 0$.
Therefore \eqref{zeros-21:33-15-07-20} cannot be true and one of $g(x_-)$ and $g(x_+)$ is positive. 
By Proposition \ref{rank-13-07}, this means that $A_{(\beta_0,x_+,\widehat\beta)}\succ 0$ or $A_{(\beta_0,x_-,\widehat\beta)}\succ 0$
and hence $\Rank A_{(\beta_0,x_-,\widehat\beta)}=k$ or $\Rank A_{(\beta_0,x_+,\widehat\beta)}=k$.
By Proposition \ref{rank-13-07} and \eqref{zeros-21:39-15-07-20}, $\Rank A_{\beta(x_-)}=\Rank A_{\beta(x_+)}=\Rank A_{\widetilde \beta}=k$.
Therefore $\Rank A_{\beta(x_-)}=\Rank A_{(\beta_0,x_-,\widehat\beta)}$ or $\Rank A_{\beta(x_+)}=\Rank A_{(\beta_0,x_+,\widehat\beta)}$.
Noticing that $A_{(\beta_0,x_\pm,\widehat\beta)}=A_{\beta(x_\pm)}(k-1)$, it follows that one of $x_\pm$ satisfies \eqref{21:44-15-07-20}.
This concludes the proof of the moreover part.
\end{proof}

\begin{remark}
	For $k=1$, the THMP with gaps $(\beta_1)$ coincides with the THMP with gaps $(\beta_{2k-1})$
	and hence the case $k=1$ is already covered by  Theorem \ref{trunc-Hamb-without-3n-1-bg}.
\end{remark}

\begin{example}\label{ex-v2}
	For $k=9$, let
		\begin{align*}
			\beta^{(1)}(x) &=\Big(1,x,11,0,\frac{979}{5},0,4103,0,\frac{462979}{5},0,2174855,
						0,\frac{261453379}{5},0,1275350087,0,\frac{156925970179}{5},\\
			&\hspace{0,5cm} 0,776760884999\Big),\\
			\beta^{(2)}(x) &=\Big(1,x,\frac{15}{2}, 0, \frac{177}{2}, 0, \frac{2445}{2}, 0, \frac{36177}{2},
			0,\frac{554325}{2},0,\frac{8656377}{2},0,\frac{136617405}{2},0,\frac{2169039777}{2},0,\\
			&\hspace{0,5cm} \frac{138214318741}{8}\Big),\\	
			\beta^{(3)}(x) &=\Big(1,x,\frac{15}{2}, 0, \frac{177}{2}, 0, \frac{2445}{2}, 0, \frac{36177}{2},
			0,\frac{554325}{2},0,\frac{8656377}{2},0,\frac{136617405}{2},0,\frac{2169039777}{2},0,\\
			&\hspace{0,5cm} \frac{34553579685}{2}\Big),\\
			\beta^{(4)}(x) &=\frac{1}{9}(9,x,133, -235, 3157, -7987, 86893, -281995, 2598757,-10096867,82154653,-362972155,\\ 		
			&\hspace{0,5cm} 2699153557,-13062280147,91112865613,-470199300715,3134918735557,-16926788453827,\\
			&\hspace{0,5cm} 109327177835773),
		\end{align*}
	Let $\widetilde A^{(i)}$ and $\widehat A^{(i)}$, $i=1,2,3$, denote $\widetilde A$,  $\widehat A$, respectively,
	from Theorem \ref{trunc-Hamb-without-1}	corresponding to $\beta^{(i)}(x)$. 
	Using \textit{Mathematica} \cite{Wol} one can check that:
	\begin{itemize}
		\item  $\widehat A^{(1)}\succ 0$, while for $i=2,3,4$ it holds that
			$\widehat A^{(i)}\succeq 0$ and $\dim\left(\ker \widehat A^{(i)}\right)=1$.
		\item For $i=1,4$ we have $\widetilde A^{(i)}\succ 0$ for $i=1,4$, while for $i=2,3$ it holds that
				$\widetilde A^{(i)}\succeq 0$ and $\dim\left(\ker \widetilde A^{(i)}\right)=1$.
		\item $A_{\widetilde \beta^{(i)}}\succ 0$ for $i=1,2,4$,
				$A_{\widetilde \beta^{(3)}}\succeq 0$ and $\dim\left(\ker A_{\widetilde \beta^{(3)}}\right)=1$.
		\item $A_{\widehat \beta^{(3)}}\succ 0$ and $A_{\breve \beta^{(3)}}\succ 0$.
	\end{itemize}
	Therefore:
	\begin{itemize}
		\item  $A_{\beta^{(1)}(x)}$ is ppsd and \eqref{pt3a-v1906} of 		
			Theorem \ref{trunc-Hamb-without-1} is true, implying that a 9-atomic measure for 
			$\beta^{(1)}(x)$ exists.
		\item $\beta^{(2)}(x)$ does not satisfy 
			$\eqref{pt3a-v1906}$ neither \eqref{pt3b-v1906} of Theorem \ref{trunc-Hamb-without-1},
			implying there is no representing measure for $\beta^{(2)}(x)$.
		\item $A_{\beta^{(3)}(x)}$ is ppsd and $\widetilde \beta^{(3)}$ satisfies \eqref{pt3b-v1906} of 		
			Theorem \ref{trunc-Hamb-without-1} together with
			$\Rank A_{\widehat \beta^{(3)}}=\Rank \widehat A^{(3)}$, implying that an 8-atomic measure for 
			$\beta^{(3)}(x)$ exists.
		\item $A_{\beta^{(4)}(x)}$ is ppsd and $\widetilde \beta^{(4)}$ satisfies \eqref{pt3a-v1906} of 		
			Theorem \ref{trunc-Hamb-without-1}, implying that a 9-atomic measure for 
			$\beta^{(4)}(x)$ exists.
	\end{itemize}
\end{example}

The following corollary is a consequence of Theorem \ref{trunc-Hamb-without-1} and gives the solution of the bivariate TMP for the curve $y^2=x^3$. 

\begin{corollary}
	\label{Y2=X3-general}
	Let $\displaystyle\beta=(\beta_{i,j})_{i,j\in \ZZ^2_+,i+j\leq 2k}$ 
	be a 2-dimensional real multisequence of degree $2k$. 
	Suppose $M(k)$ is positive semidefinite and recursively generated.
	Let 
	$$u^{(i)}:=(\beta_{1,i},\beta_{0,i+1},\beta_{2,i}) \quad\text{for }i=0,\ldots,2k-2,$$
	$$\widehat{\beta}:=(u^{(0)},\ldots,u^{(2k-2)}),\quad
		\widetilde{\beta}:=(\widehat \beta,\beta_{1,2k-1},\beta_{0,2k}),\quad
		\overline{\beta}:=(\beta_{2,0},u^{(1)},\ldots,u^{(2k-2)})$$
	$$\text{and}\quad
		\widebreve{\beta}:=(\overline \beta,\beta_{1,2k-1},\beta_{0,2k})$$
	be subsequences of $\beta$,
		$$v:=\left(\begin{array}{cccc} u^{(0)} & \cdots & u^{(k-2)} & \beta_{1,k-1}\end{array}\right)$$
	a vector 
	and
		$$\widetilde{A}:=\left(\begin{array}{cc} \beta_0 & v \\ v^T & A_{\overline{\beta}} \end{array}\right)$$
	a matrix.
	Then $\beta$ has a representing measure supported on $y^2=x^3$ if and only if the following 
	statements hold:
	\begin{enumerate}
		\item One of the following holds: 
			\begin{itemize}
				\item If $k\geq 3$, then $Y^2=X^3$ is a column relation of $M(k)$. 
				\item If $k=2$, then the equalities 
					$\beta_{0,2}=\beta_{3,0}$, $\beta_{1,2}=\beta_{4,0}$, $\beta_{0,3}=\beta_{3,1}$ hold.
				\item $k=1$.
			\end{itemize}
		\item One of the following holds:
		\begin{enumerate}
			\item\label{y2x3-cond1} $A_{\widetilde\beta}\succ 0$ 
				and $\widetilde{A}\succ 0.$
			\item\label{y2x3-cond1-210720} $A_{\widetilde\beta}\succeq 0$ and 
				$\Rank A_{\widehat\beta}=\Rank A_{\widetilde\beta}=\Rank A_{\breve\beta}.$
		 \end{enumerate}
	\end{enumerate}
	Moreover, if the representing measure exists, then 
	there exists a  $(\Rank \widetilde\beta)$-atomic measure if 
	\eqref{y2x3-cond1} is true or \eqref{y2x3-cond1-210720} holds with
	$\Rank A_{\widehat\beta}=\Rank M(k)$. 
	Otherwise there is a $(\Rank \widetilde\beta+1)$-atomic measure.
\end{corollary}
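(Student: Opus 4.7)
The plan is to mimic the proof of Corollary \ref{Y=X3-gen-bg} but with the substitution $(x,y)=(t^2,t^3)$ parametrizing $y^2=x^3$ in place of $(x,y)=(t,t^3)$, so that each 2-dimensional moment $\beta_{i,j}$ corresponds to the $t^{2i+3j}$-moment of a 1-dimensional measure. This reduces the TMP for $y^2=x^3$ to a THMP on $\RR$ of degree $6k$ with a single gap at index $1$, which is precisely Theorem \ref{trunc-Hamb-without-1} applied with $k$ replaced by $3k$ (and $3k>1$ for every $k\in\NN$).

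The proof would proceed in three substeps. First, for $m\in\{0,2,3,\ldots,6k\}$, define $\widetilde\beta_m:=\beta_{i,j}$ where $(i,j)$ is any nonnegative pair with $2i+3j=m$ and $i+j\leq 2k$. Two admissible representations of the same $m$ differ by iterated application of $(i,j)\mapsto(i+3,j-2)$, which preserves $\beta_{i,j}$ exactly when $Y^2=X^3$ is a column relation of $M(k)$: hypothesized for $k\geq 3$, enforced by the three listed equalities for $k=2$, and vacuous for $k=1$. Second, I would establish the bijection of representing measures, namely that atoms $(t_\ell^2,t_\ell^3)$ with densities $\lambda_\ell$ form a $(y^2-x^3)$-representing measure for $\beta$ if and only if $t_\ell$ with the same densities form an $\RR$-representing measure for the 1D sequence $(\widetilde\beta_0,x,\widetilde\beta_2,\ldots,\widetilde\beta_{6k})$; the ``if'' direction is a direct computation, the ``only if'' uses the recursive generation of $M(k)$ to canonicalize indices. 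Passage from abstract to finitely atomic measures is via the Bayer-Teichmann theorem \cite{BT06}. Third, I would apply Theorem \ref{trunc-Hamb-without-1} with $k'=3k$ to this 1D sequence, verifying by a routine indexing check that the subsequences $\widehat\beta,\widetilde\beta,\overline\beta,\widebreve\beta$, the vector $v$, and the matrix $\widetilde A$ defined in the corollary coincide with their namesakes in the theorem, and that conditions \eqref{y2x3-cond1} and \eqref{y2x3-cond1-210720} match \eqref{pt3a-v1906} (sub-case $k'>2$) and \eqref{pt3b-v1906} respectively; partial positive semidefiniteness of the 1D Hankel matrix follows from $M(k)\succeq 0$.

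The main nontrivial piece is the moreover clause. Theorem \ref{trunc-Hamb-without-1} distinguishes $(\Rank\widetilde\beta)$-atomic from $(\Rank\widetilde\beta+1)$-atomic by the equality $\Rank A_{\widehat\beta}=\Rank\widehat A$ (with $\widehat A$ the auxiliary matrix of the theorem), whereas the corollary formulates the dichotomy as $\Rank A_{\widehat\beta}=\Rank M(k)$. To reconcile them, under \eqref{y2x3-cond1-210720} Proposition \ref{measure-for-subsequence} together with Theorem \ref{Hamburger} gives $\Rank A_{\widehat\beta}=\Rank A_{\widetilde\beta}=\Rank\widetilde\beta$; the bijection of the second substep identifies the minimal atom count of a $(y^2-x^3)$-representing measure for $\beta$ with that of an $\RR$-representing measure for $(\widetilde\beta_0,x_0,\widetilde\beta_2,\ldots,\widetilde\beta_{6k})$ over all admissible $x_0$, and the standard Curto-Fialkow theory identifies the 2D minimum with $\Rank M(k)$. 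Combining these, $(\Rank\widetilde\beta)$-atomic existence translates in both formulations to the equality $\Rank M(k)=\Rank\widetilde\beta=\Rank A_{\widehat\beta}$. This careful bookkeeping of ranks, together with the cases when the existence condition \eqref{y2x3-cond1} already forces $(\Rank\widetilde\beta)$-atomic existence via the theorem's moreover clause, is the only step that goes beyond the formal translation of Theorem \ref{trunc-Hamb-without-1} to the curve $y^2=x^3$.
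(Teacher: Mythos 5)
Your proposal is correct and follows essentially the same route as the paper: the parametrization $(x,y)=(t^2,t^3)$, the definition of $\widetilde\beta_m$ for $2i+3j=m$ with well-definedness guaranteed by recursive generation and the column relation $Y^2=X^3$, the measure bijection combined with Bayer--Teichmann, and the application of Theorem \ref{trunc-Hamb-without-1} with $3k$ in place of $k$. Your treatment of the moreover clause is in fact more explicit than the paper's (which leaves the translation of $\Rank \widehat{A}$ into $\Rank M(k)$ implicit); the most direct justification there is simply that, modulo the column relations generated by $Y^2=X^3$, the columns of $M(k)$ are indexed by $T^m$, $m\in\{0,2,3,\ldots,3k\}$, so $\Rank M(k)=\Rank\widehat{A}$, which makes the two formulations of the dichotomy coincide without passing through minimal atom counts.
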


\begin{proof}

	For $m\in \{0,2\ldots,6k\}$ we define the numbers $\widetilde \beta_m$ by the following rule
		\begin{equation*}
			\widetilde \beta_m:=
			\left\{\begin{array}{rl}
				\beta_{0,\frac{m}{3}},& \text{if }m\Mod{3}=0,\\
				\beta_{2,\lfloor\frac{m}{3}\rfloor-1},& \text{if }m\Mod{3}=1,\\
				\beta_{1,\lfloor\frac{m}{3}\rfloor},& \text{if }m\Mod{3}=2.
			\end{array}\right.
		\end{equation*}
	\noindent \textbf{Claim 1.} Every number $\widetilde \beta_m$ is well-defined.\\

	We have to prove that $i+j\leq 2k$, where $i,j$ are indices of $\beta_{i,j}$ used in the definition of 
	$\widetilde \beta_m$.
	We separate three cases according to $m$:
	\begin{itemize}
		\item $m\Mod{3}=0$: $\frac{m}{3}\leq 2k$.
		\item $m\Mod{3}=1$: $2+(\lfloor\frac{m}{3}\rfloor-1)\leq 2+(2k-2)=2k.$
		\item $m\Mod{3}=2$: $1+\lfloor\frac{m}{3}\rfloor\leq 1+(2k-1)=2k.$\\
	\end{itemize}
	
	\noindent \textbf{Claim 2.} 
		Let $t\in \NN$. 
		The atoms $(x_1^2,x_1^3),\ldots (x_t^2,x_t^3)$ with densities $\lambda_1,\ldots,\lambda_t$
		are the $(y^2-x^3)$-representing measure for $\beta$
		if and only if
		the atoms $x_1,\ldots,x_t$ with densities $\lambda_1,\ldots,\lambda_t$
		are the $\RR$-representing measure for 
		 $\widetilde \beta(x)=(\widetilde \beta_0,x,\widetilde \beta_2,\ldots,\widetilde\beta_{2k})$.\\

	The if part follows from the following calculation:
		$$\widetilde \beta_{m}
			=\left\{\begin{array}{rl}
				\beta_{0,\frac{m}{3}},& \text{if }m\Mod{3}=0,\\
				\beta_{2,\lfloor\frac{m}{3}\rfloor-1},& \text{if }m\Mod{3}=1,\\
				\beta_{1,\lfloor\frac{m}{3}\rfloor},& \text{if }m\Mod{3}=2,
				\end{array}\right.
			=\left\{\begin{array}{rl}
				\sum_{\ell=1}^t \lambda_\ell (x_{\ell}^3)^{\frac{m}{3}},& \text{if }m\Mod{3}=0,\\
				\sum_{\ell=1}^t \lambda_\ell (x_{\ell}^2)^2(x_{\ell}^3)^{\lfloor\frac{m}{3}\rfloor-1},& 
					\text{if }m\Mod{3}=1,\\
				\sum_{\ell=1}^t \lambda_\ell x_{\ell}^2(x_{\ell}^3)^{\lfloor\frac{m}{3}\rfloor},& 
					\text{if }m\Mod{3}=2,\\
				\end{array}\right.
			=\sum_{\ell=1}^t \lambda_\ell x_\ell^{m},$$
	where $m=0,2,\ldots,6k$.

	The only if part follows from the following calculation:
	\begin{align*}
	  \beta_{i,j}
		&= \beta_{i-3,j+2}=\cdots=\beta_{i\Mod{3},j+2\lfloor\frac{i}{3}\rfloor}
		  =\widetilde \beta_{2(i\Mod{3})+3(j+2\lfloor\frac{i}{3}\rfloor)}\\
		&= \sum_{\ell=1}^t \lambda_\ell x_\ell^{2(i\Mod{3})+3(j+2\lfloor\frac{i}{3}\rfloor)}
		  =\sum_{\ell=1}^t \lambda_\ell x_\ell^{2(i\Mod{3}+3\lfloor\frac{i}{3}\rfloor)} x_{\ell}^{3j}
		  =\sum_{\ell=1}^t \lambda_\ell (x_\ell^2)^{i}(x_{\ell}^3)^{j},
	\end{align*}
	where the first three equalities in the first line follow by $M(k)$ being rg.\\

	Using Claim 2 and a theorem of Bayer and Teichmann \cite{BT06}, implying that if a finite sequence 
	has a $K$-representing measure, then it has a finitely atomic $K$-representing measure, the statement of the 	
	Corollary follows by Theorem \ref{trunc-Hamb-without-1}.
\end{proof}

\subsection{Truncated Hamburger moment problem of degree $2k$ with gaps $(\beta_{1},\beta_{2})$}


\setlength{\columnsep}{1cm}

\begin{theorem}\label{trunc-Hamb-without-1-2} 
  Let $k\in \NN$, $k>2$, and 
	$$\beta(x,y):=(\beta_0,x,y,\beta_3,\ldots,\beta_{2k})$$
  be a sequence, where each $\beta_i$ is a real number, $\beta_0>0$ and $x,y$ are variables.   
  Let 
	$$\widetilde{\beta}:=(\beta_4,\ldots,\beta_{2k-2}),\;\;
		\overline\beta:=(\beta_{4},\ldots,\beta_{2k}),\;\;
		\breve\beta:=(\beta_6,\ldots,\beta_{2k-2}),\;\;
		\overline{\overline{\beta}}:=(\beta_{6},\ldots,\beta_{2k})$$
be subseqeunces of $\beta(x,y)$,
	$$v:=\left(\begin{array}{ccc}\beta_{3} & \dots & \beta_{k-1}\end{array}\right),\;\;
			u:=\left(\begin{array}{ccc}\beta_{3} & \cdots & \beta_{k}\end{array}\right),\;\;
				s:=\left(\begin{array}{ccc}\beta_{3} & \cdots & \beta_{k+1}\end{array}\right),$$
					$$w:=\left(\begin{array}{ccc}\beta_{5} & \cdots &\beta_{k+2}\end{array}\right),$$
vectors,
and
	$$\overline{A}:=
		\left(\begin{array}{cc} \beta_0 & v \\ v^T & A_{\breve \beta}\end{array}\right)
		\quad\text{and}\quad
		\widetilde{A}:=\left(\begin{array}{cc} \beta_0 & u \\ u^T & A_{\overline{\overline{ \beta}}}\end{array}\right)$$
matrices.
  Then the following statements are equivalent: 
\begin{enumerate}	
	\item\label{pt1-v2306} There exist $x_0,y_0\in \RR$ and a representing measure for $\beta(x_0,y_0)$ supported on $K=\RR$.
	\item\label{pt2-v2306} There exist $x_0,y_0\in \RR$ and a 
		$(\Rank \overline\beta)$ or 
		$(\Rank \overline\beta+1)$-atomic representing measure for $\beta(x_0,y_0)$.
	\item\label{pt3-v2306} $A_{\beta(x,y)}$ is partially positive semidefinite,
	\begin{equation}\label{cond-to-solve-v2306}
		 	sA_{\overline{\beta}}^{+}s^T 
			\leq u A_{\overline{\overline\beta}}^+w^T+\sqrt{(A_{\overline{\beta}}/A_{\overline{\overline{\beta}}}) (\widetilde A/A_{\overline{\overline \beta}})}
	\end{equation}
	and one of the following statements is true: 
	\begin{enumerate}
	\item\label{case1-2306} $A_{\overline{\beta}}\succ 0$
		and one of the following holds:
		\begin{enumerate}
			\item\label{subcase1-2606} 
			  \begin{enumerate}
				\item $k=3$ and the inequality in \eqref{cond-to-solve-v2306} is strict..
				\item $k>3$, $\overline A\succ 0$ and the inequality in \eqref{cond-to-solve-v2306} is strict.
			  \end{enumerate}
			\item\label{subcase2-2606} The following inequalities holds:
				$$uA_{\widetilde{\beta}}^{+}u^T < sA_{\overline{\beta}}^{+}s^T
					\quad\text{and}\quad 
					u A_{\overline{\overline\beta}}^+w^T-
					\sqrt{(A_{\overline{\beta}}/A_{\overline{\overline{\beta}}}) 
					(\widetilde A/A_{\overline{\overline \beta}})}
					\leq sA_{\overline{\beta}}^{+}s^T.$$
		\end{enumerate}
	\item\label{case3-2306} $\Rank A_{\widetilde \beta}=\Rank A_{\overline \beta}=
											\Rank \left(\begin{array}{cc} s^T & A_{\overline \beta}\end{array}\right)$.
	\end{enumerate}
\end{enumerate}

Moreover, if the representing measure exists, then there is a $(\Rank \overline\beta)$-atomic if
and only if \eqref{subcase2-2606} or \eqref{case3-2306} holds.
\end{theorem}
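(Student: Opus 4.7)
The plan is to reduce the two-gap problem to the single-gap case by fixing one variable, following the strategy used in the proof of Theorem~\ref{trunc-Hamb-without-3n-2-and-1}. Specifically, $\beta(x,y)$ admits a representing measure if and only if there exists $y_0\in\RR$ such that $\beta(x,y_0)=(\beta_0,x,y_0,\beta_3,\ldots,\beta_{2k})$ admits a representing measure for some $x_0\in\RR$. The reduced sequence has a single gap at position $1$, so Theorem~\ref{trunc-Hamb-without-1} applies; under the natural identifications, our $\widetilde\beta$, $\overline\beta$, $\breve\beta$, $\overline{\overline\beta}$ play the roles of $\overline\beta$, $\widebreve\beta$, and their truncations in that theorem. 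The resulting conditions on $\beta(x,y_0)$ consist of a ppsd requirement on $A_{\beta(x,y_0)}$ together with an alternative between a pd branch (both the analogue of $A_{\widetilde\beta^{\ast}(y_0)}$ and the analogue of $\widetilde A^{\ast}(y_0)$ are pd, each depending on $y_0$) and a singular/rank-equality branch.

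Second, I would establish a claim analogous to Claim~2 in the proof of Theorem~\ref{trunc-Hamb-without-3n-2-and-1}. Applying Lemma~\ref{psd-completion} to the fully specified principal submatrix of $A_{\beta(x,y)}$ containing the entry $y$ (after a suitable permutation it matches the matrix $A(x)$ of that lemma, with $A_{\overline{\overline\beta}}$, $A_{\overline\beta}$, $\widetilde A$ in the roles of $A_1, A_2, A_3$) yields an explicit interval $[y_-,y_+]$ where $y_\pm = u A_{\overline{\overline\beta}}^+ w^T \pm \sqrt{(A_{\overline\beta}/A_{\overline{\overline\beta}})(\widetilde A/A_{\overline{\overline\beta}})}$ of admissible $y_0$, together with a rank formula distinguishing the endpoints from the interior. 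A parallel Schur-complement analysis shows that the extra ppsd requirement arising in the reduced problem, namely that the analogue of $A_{\widetilde\beta^{\ast}(y_0)}$ be psd, is equivalent to $y_0\geq sA_{\overline\beta}^+ s^T$. Thus the inequality~\eqref{cond-to-solve-v2306} is precisely the statement $sA_{\overline\beta}^+ s^T\leq y_+$, i.e.\ that these two admissibility regions for $y_0$ intersect.

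With these preparations, $\eqref{pt3-v2306}\Rightarrow\eqref{pt2-v2306}$ is proved by selecting an appropriate $y_0$ in each case. If \eqref{case1-2306}\eqref{subcase1-2606} holds then $(sA_{\overline\beta}^+ s^T, y_+]$ has non-empty interior inside $[y_-,y_+]$; picking $y_0$ in this interior makes the reduced problem fall into the pd branch of Theorem~\ref{trunc-Hamb-without-1}, which together with Theorem~\ref{Hamburger} yields a $(\Rank\overline\beta+1)$-atomic measure. If \eqref{case1-2306}\eqref{subcase2-2606} holds I choose $y_0=sA_{\overline\beta}^+ s^T\in[y_-,y_+]$, forcing $A_{\widetilde\beta^{\ast}(y_0)}$ to become singular; Proposition~\ref{rank-13-07} together with the extension principle Lemma~\ref{extension-principle} propagates the rank drop, the singular branch of Theorem~\ref{trunc-Hamb-without-1} applies, and Corollary~\ref{singular-case-measure} gives a minimal $(\Rank\overline\beta)$-atomic measure. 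If \eqref{case3-2306} holds then $A_{\overline\beta}$ is singular and $y_-=y_+$ by Proposition~\ref{rank-13-07}; the same extension-principle argument propagates the assumed rank equalities and yields the $(\Rank\overline\beta)$-atomic measure.

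For $\eqref{pt1-v2306}\Rightarrow\eqref{pt3-v2306}$ I would use Proposition~\ref{measure-for-subsequence} to inherit representing measures on $\widetilde\beta$ and $\overline\beta$, Theorem~\ref{Hamburger} to identify rank with rank of the Hankel matrix, and Corollaries~\ref{rank-theorem-2}--\ref{rank-theorem-3} to chain rank equalities in the singular branch; the inequality~\eqref{cond-to-solve-v2306} is forced because the $y_0$ realised by the measure must simultaneously lie in $[y_-,y_+]$ and satisfy $y_0\geq sA_{\overline\beta}^+ s^T$. The moreover part then follows from the rank formula in Lemma~\ref{psd-completion}\eqref{psd-comp-pt2}: a $(\Rank\overline\beta)$-atomic measure is realised only at boundary values of $y_0$, which translates into exactly \eqref{subcase2-2606} or \eqref{case3-2306}. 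The main obstacle will be aligning the $y_0$-dependent matrices $\widetilde A^{\ast}(y_0)$, $\widehat A^{\ast}(y_0)$ coming from Theorem~\ref{trunc-Hamb-without-1} with the $y_0$-independent hypotheses of \eqref{pt3-v2306}, in particular translating the interior-versus-boundary dichotomy for $y_0$ into the subcases \eqref{subcase1-2606} vs.\ \eqref{subcase2-2606}, and handling the various degeneracies where $A_{\overline\beta}$ or $A_{\overline{\overline\beta}}$ is singular versus positive definite.
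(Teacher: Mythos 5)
Your plan follows essentially the same route as the paper's proof: fix $y_0$ and invoke Theorem \ref{trunc-Hamb-without-1} for the one--gap sequence $\beta(x,y_0)$; apply Lemma \ref{psd-completion} to the fully specified block containing $y$ (with $A_{\overline{\overline\beta}}$, $A_{\overline\beta}$, $\widetilde A$ in the roles of $A_1,A_2,A_3$) to obtain the admissible interval $[y_-,y_+]$ together with the rank formula; observe that positive semidefiniteness of $\left(\begin{smallmatrix} y_0 & s\\ s^T & A_{\overline\beta}\end{smallmatrix}\right)$ amounts to $y_0\geq sA_{\overline\beta}^{+}s^T$; and read \eqref{cond-to-solve-v2306} as non\-emptiness of the intersection of the two admissibility regions. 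The choices $y_0=sA_{\overline\beta}^{+}s^T$ in cases \eqref{subcase2-2606} and \eqref{case3-2306}, the treatment of the forward implication via Proposition \ref{measure-for-subsequence} and Corollaries \ref{rank-theorem-2}--\ref{rank-theorem-3}, and the reduction of the moreover part to the critical value $sA_{\overline\beta}^{+}s^T$ all match the paper.

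There is, however, one step that fails as stated. In case \eqref{subcase1-2606} you assert that $(sA_{\overline\beta}^{+}s^T,\,y_+]$ has non\-empty interior inside $[y_-,y_+]$ and propose to pick $y_0$ in that interior so that the reduced problem lands in the positive definite branch of Theorem \ref{trunc-Hamb-without-1}. This is only correct when $\widetilde A\succ 0$. The hypotheses of \eqref{subcase1-2606} are $A_{\overline\beta}\succ 0$, $\overline A\succ 0$ (for $k>3$) and strictness of \eqref{cond-to-solve-v2306}; they do not exclude $\widetilde A$ being singular, in which case $\widetilde A/A_{\overline{\overline\beta}}=0$, hence $y_-=y_+$ and the admissible set degenerates to the single point $y_+$, with no interior to choose from. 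The paper needs a separate sub\-argument precisely here: using $\overline A\succ 0$ (or $\beta_0>0$ when $k=3$), the rank formula of Lemma \ref{psd-completion} and the extension principle (Lemma \ref{extension-principle}), it shows that the last column of $\widetilde A(y_+)$ lies in the span of the others, so that $\overline A(y_+)\succ 0$ and, by strictness of \eqref{cond-to-solve-v2306}, $A_{\overline\beta(y_+)}\succ 0$; only then does $y_0=y_+$ fall into the positive definite branch. Your closing remark anticipates degeneracies of $A_{\overline\beta}$ and $A_{\overline{\overline\beta}}$, but the degeneracy that actually has to be handled in this case is that of $\widetilde A$, and without it the implication $\eqref{pt3-v2306}\Rightarrow\eqref{pt1-v2306}$ is not established in that subcase. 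The remainder of the proposal is consistent with the paper's argument.
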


\begin{proof}
Note that $\beta(x,y)$ admits a measure if and only if there exist $y_0\in \RR$
such that $\beta(x,y_0)$ admits a measure.
Theorem \ref{trunc-Hamb-without-1} implies the following claim holds.\\

\noindent \textbf{Claim 1.} $\beta(x,y_0)$ admits a measure if and only if 
the following conditions hold:
\begin{enumerate}
	\item $A_{\beta(x,y_0)}$ is ppsd.
	\item Denoting 
		$$\widetilde\beta(y_0):=(y_0,\beta_3,\ldots,\beta_{2k-2})\quad\text{and}\quad
			\overline\beta(y_0):=(y_0,\beta_3,\ldots,\beta_{2k}),$$ 
	one of the following is true:
	\begin{enumerate}
		\item\label{cond1-proof-2306}  $A_{\overline\beta(y_0)}=
				\left(\begin{array}{cc} y_0 & s \\ s^T 
									& A_{\overline{\beta}} \end{array}\right)
				\succ 0$ 
			and 
				$\overline A(y_0)\succ 0$,
			where
			$$\overline A(y)
				:=\left\{\begin{array}{ll}
					\left(\begin{array}{c|c}\beta_0 & \begin{array}{cc} y & \beta_3 \end{array}\\\hline
						\begin{array}{c} y\\\beta_3 \end{array} & A_{\widetilde{\beta}} \end{array}\right),&
					\text{if }k=3,\\
					\left(\begin{array}{cc} \beta_0 & v(y) \\ v(y)^T 
									& A_{\widetilde{\beta}} \end{array}\right)
				=\left(\begin{array}{ccc} 
							\beta_0 & y & v \\ 
							y & \beta_{4} & w_1\\
							v^T & w_1^T & A_{\breve{\beta}} \end{array}\right),
			\quad	v(y)^T=\left(\begin{array}{c} y \\ v\end{array}\right)
				\;\;\text{and}\;\; 
			w_1^T=	
				\left(\begin{array}{ccc} 
					\beta_{5} \\ \vdots \\ \beta_{k+2}
				\end{array}\right),& \text{otherwise}.\end{array}\right.$$
		\item\label{cond2-proof-2306} 
			$\Rank A_{\widetilde\beta(y_0)}=\Rank A_{\overline\beta(y_0)}=\Rank A_{\overline\beta}.$\\
	\end{enumerate}
\end{enumerate}

We denote by
	$$\widetilde A(y):=\left(\begin{array}{cc} \beta_0 & u(y) \\ u(y)^T 
									& A_{\overline{\beta}} \end{array}\right)\quad \text{where} \quad u(y)=\left(\begin{array}{cc} y & u\end{array}\right).$$

\noindent\textbf{Claim 2.}
Assume $A_{\overline{\overline\beta}}\succ 0$ or $\Rank A_{\overline{\overline\beta}}=\Rank A_{\overline \beta}.$ 
Then $\widetilde A(y_0)\succeq 0$ if and only if
\begin{equation}\label{psd-cond-1-case3-2306}
	\widetilde A(y)\;\text{is ppsd}\quad \text{and}\quad
		y_0\in 
		\Big[u A_{\overline{\overline\beta}}^+w^T-\sqrt{(A_{\overline{\beta}}/A_{\overline{\overline{\beta}}}) (\widetilde A/A_{\overline{\overline \beta}})},
			u A_{\overline{\overline\beta}}^+w^T+\sqrt{(A_{\overline{\beta}}/A_{\overline{\overline{\beta}}})(\widetilde A/A_{\overline{\overline\beta}})}\Big]=:[y_-,y_+].
\end{equation}
Moreover,
  \begin{equation}\label{rank-cond-11-07}
	\Rank \widetilde A(y_0)=
	\left\{\begin{array}{rl} 
		\max\{\Rank A_{\overline \beta}, \Rank \widetilde A\}, & y\in \{y_-,y_+\},\\
		\max\{\Rank A_{\overline \beta}, \Rank \widetilde  A\}+1, & y\in (y_-,y_+).
	 \end{array}\right.
	\end{equation}\\
\indent
Let $P_2$ be the permutation matrix as in the proof of Theorem \ref{trunc-Hamb-without-1}.
We have that
	$P_2^T\widetilde A(y)P_2$ is of the form 
\begin{equation}\label{permutation-2306}
	P_2^T\widetilde A(y) P_2=
	\left(\begin{array}{ccc}
			A_{\overline{\overline \beta}}& w^T & u^T \\ 
			w & \beta_{4} & y \\ 
			u & y & \beta_0 \\\end{array}\right),
\end{equation}
and denoting the matrices 
	$$
	\cA:=\left(\begin{array}{cc}A_{\overline{\overline \beta}} & w^T \\ w & \beta_{4}\end{array}\right)
		\quad\text{and}\quad
	\cB:=\left(\begin{array}{cc}A_{\overline{\overline \beta}} & u^T \\ u & \beta_{0}\end{array}\right),
	$$
and the permuation matrix $P_3:\RR^{k-1}\to \RR^{k-1}$ by
	$$P_3=
		\left(\begin{array}{cc}
			\bf{0} &  1\\
			I_{k-2} & 0
		\end{array}\right),$$
where $\bf{0}$ stands for the row of $k-2$ zeros and $I_{k-2}$ is the identity matrix of size $k-2$,
we have that
	\begin{equation}\label{rank-9;49-17-07-20}
		\cA=P_3^TA_{\overline \beta}P_3 \quad \text{and} \quad \cB=P_3^T \widetilde A P_3.
	\end{equation}
By the assumptions in Claim 2 and \eqref{rank-9;49-17-07-20}, $A_{\overline{\overline \beta}}\succ 0$ or 
$\Rank A_{\overline{\overline \beta}}=\Rank \cA$.
Hence, the assumption \eqref{assump-on-ranks} of Lemma \ref{psd-completion} used for 
$P_2^T\widetilde A(y) P_2, A_{\overline{\overline\beta}}, \cA,\cB$ as $A(x), A_1, A_2, A_3$, respectively,
is satisfied and using also 
	$\cA/A_{\overline{\overline\beta}}=A_{\overline\beta}/A_{\overline{\overline\beta}},$
	$\cB/A_{\overline{\overline\beta}}=\widetilde A/A_{\overline{\overline\beta}}$, Claim 2 follows.\\

Theorem \ref{block-psd} implies the following claim.\\

\noindent\textbf{Claim 3.} 
It is true that:
\begin{enumerate}
	\item\label{pt1-14:59-16-07} $A_{\overline\beta(y_0)}\succeq 0\quad$ if and only if
\begin{equation}\label{psd-cond-1-case2-2306v1}
	A_{\overline\beta}\succeq 0, \qquad 
	s^T\in\cC(A_{\overline\beta})
	\qquad\text{and}\qquad
	A_{\overline\beta(y_0)}/A_{\overline\beta}=y_0-sA_{\overline\beta}^{+}s^T\geq 0.
\end{equation} 
	\item $A_{\widetilde\beta(y_0)}\succeq 0\quad$ if and only if
\begin{equation}\label{psd-cond-1-case2-2306v1-16-07}
	A_{\widetilde\beta}\succeq 0, \qquad 
	u^T\in\cC(A_{\widetilde\beta})
	\qquad\text{and}\qquad
	A_{\widetilde\beta(y_0)}/A_{\widetilde\beta}=y_0-uA_{\widetilde\beta}^{+}u^T\geq 0.
\end{equation} 
\end{enumerate}

\noindent\textbf{Claim 4.} 
	Assume $A_{\overline{\overline\beta}}\succ 0$ or $\Rank A_{\overline{\overline\beta}}=\Rank A_{\overline \beta}.$ 
	Then $A_{\beta(x,y_0)}$ is ppsd for some $y_0\in \RR$ 
		if and only if
	$A_{\beta(x,y)}$ is ppsd, $s^T\in\cC(A_{\overline\beta})$  and  \eqref{cond-to-solve-v2306} holds.\\

Note that 
	$A_{\beta(x,y_0)}$  is ppsd 
		if and only if 
	$A_{\overline\beta(y_0)}\succeq 0$ and $\widetilde{A}(y_0)\succeq 0$.
The first condition of \eqref{psd-cond-1-case3-2306} (which also includes the first condition of \eqref{psd-cond-1-case2-2306v1})
is equivalent to $A_{\beta(x,y)}$ being ppsd. Further on, $y_0$ satisfying the third condition of \eqref{psd-cond-1-case2-2306v1}
and the second condition of \eqref{psd-cond-1-case3-2306} exists if and only if \eqref{cond-to-solve-v2306}  holds. 
This proves Claim 4.\\

First we prove the implication $\eqref{pt1-v2306}\Rightarrow\eqref{pt3-v2306}$.
By Claim 1, in particular $A_{\beta(x,y_0)}$ (and hence also $A_{\beta(x,y)}$)  is ppsd.
Since $\widetilde\beta(y_0)$ also admits a measure by 
Proposition \ref{measure-for-subsequence},
we either have $A_{\widetilde\beta(y_0)}\succ 0$ and in particular 
$A_{\overline{\overline\beta}}\succ 0$, or $A_{\widetilde\beta(y_0)}$ is singular and
it follows by  Corollary \ref{rank-theorem-3} that 
$A_{\overline{\overline\beta}}\succ 0$ or $\Rank A_{\overline{\overline\beta}}=\Rank A_{\overline \beta}.$ 

If \eqref{cond1-proof-2306} of Claim 1 holds, then in particular $A_{\overline \beta}\succ 0$
and if $k>3$ also $\overline A\succ 0$. Since $A_{\overline \beta}(y_0)\succ 0$, it follows using Proposition \ref{rank-13-07}
that $A_{\overline \beta}(y_0)/A_{\overline \beta}>0$ or equivalently
$y_0>sA_{\overline \beta^+}s^T$. Since by Claim 2, $y_0\in [y_-,y_+]$, this implies that
$sA_{\overline \beta^+}s^T<y_+$ which means that the inequality in 
\eqref{cond-to-solve-v2306} is strict.
Hence, $A_{\beta(x,y})$ is ppsd, $A_{\overline \beta}\succ 0$ and \eqref{subcase1-2606}
holds. This proves the implication  $\eqref{pt1-v2306}\Rightarrow\eqref{pt3-v2306}$ in this case.

Assume now that \eqref{cond2-proof-2306} of Claim 1 holds. 
There are two cases to consider:
\begin{itemize}
  \item $A_{\overline\beta}\succ 0$: It follows that 
	$\Rank A_{\widetilde{\beta}(y_0)}=\Rank A_{\overline{\beta}(y_0)}=k-1$, which implies that:
	\begin{itemize}
	  \item $A_{\widetilde{\beta}(y_0)}\succ 0$ since $A_{\widetilde{\beta}(y_0)}$ is of size $k-1$.
	  \item	By Proposition \ref{rank-13-07},	$y_0=sA_{\overline\beta}^{+}s^T$ since 
			$A_{\overline{\beta}(y_0)}=
				\left(\begin{array}{cc}
					y_0 & s\\ 
					s^T& A_{\overline\beta}
				\end{array}\right)$
			is singular.
	  \item	$k-1\leq \Rank A_{\beta(x_0,y_0)}\leq k$ for some $x_0\in \RR$ such that 
		$A_{\beta(x_0,y_0)}\succeq 0$,
		since 
		$$A_{\beta(x_0,y_0)}=
				\left(\begin{array}{cc}
					\beta_0 & u(x_0,y_0)\\ 
					u(x_0,y_0)^T & A_{\overline\beta(y_0)}
				\end{array}\right)\quad\text{where}\quad
					u(x_0,y_0)=\left(\begin{array}{ccc} x_0 & y_0 & u \end{array}\right).$$
	\end{itemize}
	From $A_{\widetilde{\beta}(y_0)}\succ 0$ and $y_0=sA_{\overline\beta}^{+}s^T$,
	it follows by Proposition \ref{rank-13-07}
	and \eqref{psd-cond-1-case2-2306v1-16-07} that
	$uA_{\widetilde{\beta}}^{+}u^T<sA_{\overline\beta}^{+}s^T$.
	Further on, 
	$y_-\leq sA_{\overline\beta}^{+}s^T$ since by Claim 2, 
	$\widetilde A(y_0)\succeq 0$ implies that $y_0\in [y_-,y_+]$.
	Hence, $A_{\beta(x,y})$ is ppsd, $A_{\overline \beta}\succ 0$ and \eqref{subcase2-2606}
	holds. 
	This proves the implication  $\eqref{pt1-v2306}\Rightarrow\eqref{pt3-v2306}$ in this case.
  \item $A_{\overline\beta}\not\succ 0$: By Lemma \ref{measure-for-subsequence}, 
	$\overline\beta$ also  admits a measure
	and hence by Corollary \ref{singular-case-measure} used for 
	$\overline{\beta}$ as $\beta$, $\Rank A_{\widetilde \beta}=\Rank A_{\overline\beta}$.
	Together with the second condition in \eqref{psd-cond-1-case2-2306v1}, 	
	this implies that \eqref{case3-2306} holds.
	Since $A_{\beta(x,y)}$ is ppsd and  \eqref{cond-to-solve-v2306} holds,
	this proves the implication  $\eqref{pt1-v2306}\Rightarrow\eqref{pt3-v2306}$ in this case.\\
\end{itemize}

Second we prove the implication $\eqref{pt3-v2306}\Rightarrow\eqref{pt1-v2306}$.
If \eqref{case1-2306} holds, then $A_{\overline\beta}\succ 0$ and in particular
$A_{\overline{\overline\beta}}\succ 0$. 
Else \eqref{case3-2306} holds and in particular $A_{\overline\beta}$ is singular.
By Claim 3, $A_{\overline\beta(sA_{\overline \beta}^+s^T)}\succeq 0$ and hence 
by Corollary \ref{rank-theorem-3} used for $\overline\beta(sA_{\overline \beta}^+s^T)$
as $\beta$ we conclude that $\Rank A_{\overline{\overline\beta}}=\Rank A_{\overline\beta}.$
Hence the assumption of Claims 2 and 4 is satisfied and
$A_{\beta(x,y_0)}$ is ppsd for every $y_0$ from the interval 
	$[\max\{y_-,sA_{\overline \beta}^+s^T\},y_+]$.
We separate cases three cases according to the assumptions:

\begin{itemize}
  \item Case \eqref{subcase1-2606}: We separate two cases according to the invertibility 
	of $\widetilde{A}$.
	\begin{itemize}
  		\item $\widetilde{A}\succ 0$: 
			Since $A_{\overline{\beta}}\succ 0$ and $\widetilde{A}\succ 0$, it follows that
			$A_{\overline{\beta}}/A_{\overline{\overline \beta}}>0$ and 
			$\widetilde A/A_{\overline{\overline \beta}}>0$. 
			By the form of $y_\pm$ given in Claim 2, we have that $y_-<y_+$. 
			Since by assumption also the inequality \eqref{cond-to-solve-v2306} is strict,
			the interval $(\max\{y_-,sA_{\overline \beta}^+s^T\},y_+)$ is not empty and 
			hence for every $y_0\in  (\max\{y_-,sA_{\overline \beta}^+s^T\},y_+)$, 		
			$A_{\beta(x,y_0)}$ satisfies \eqref{cond1-proof-2306} above by Claims 2 and 3.
			This proves the implication $\eqref{pt3-v2306}\Rightarrow\eqref{pt1-v2306}$
			in this case.
		  \item $\widetilde{A}$ in singular: First we show that the last column of
		  $\widetilde A$ is in the span of others. We separate two cases according to $k$.
		  \begin{itemize}
			\item $k=3$:	
			 Since $\widetilde{A}=
					\left(\begin{array}{cc}
						\beta_0 & \beta_3\\
						\beta_3 & \beta_{6}
					\end{array}\right)$
			and $\beta_0>0$, it follows that the second (also the last) column of $\widetilde{A}$ is
			a multiple of the first (also it the span of the others).
			\item $k>3$:
				Since $\overline A\succ 0$, the last column of 
				 $\widetilde{A}=
					\left(\begin{array}{cc}
						\overline{A} & r^T\\
						r & \beta_{2k}
					\end{array}\right)$ is in the span of the others, where 
			$r=\left(\begin{array}{cccc}
				\beta_k & \beta_{k+2}&\cdots& \beta_{2k-1}
				\end{array}\right)$.
			\end{itemize}
			Since $A_{\overline \beta}\succ 0$, it follows that 
			$A_{\overline{\overline \beta}}\succ 0$ and 
			$\widetilde A/A_{\overline{\overline \beta}}=0.$
			By the form of $y_\pm$ given in Claim 2, we have that $y_-=y_+$.
			By Claim 2, 
			$\widetilde A(y_+)=
			\left(\begin{array}{cc}
						\overline A(y_+) & r_1^T\\
						r_1 & \beta_{2k}
					\end{array}\right)\succeq 0,
			$
			where
			$r_1=\left(\begin{array}{ccc}
				\beta_k & \cdots& \beta_{2k-1}
				\end{array}\right)$,
			and $\Rank \widetilde A(y_+)=\Rank A_{\overline \beta}=k-1$.
			By Lemma \ref{extension-principle}, the last column of 
			$\widetilde A(y_+)$ is in the span of the others and hence 
			$\overline A(y_+)\succ 0$.
			Since by assumption also the inequality \eqref{cond-to-solve-v2306} is strict,
			$A_{\overline{\beta}(y_+)}\succ 0$ by Claim 3. 
			Hence,  \eqref{cond1-proof-2306} of Claim 1 holds for $y_0=y_+$, which
			proves the implication $\eqref{pt3-v2306}\Rightarrow\eqref{pt1-v2306}$
			in this case.	
	\end{itemize}
  \item Case \eqref{subcase2-2606}: 
	$\beta(x,sA_{\overline{\beta}}^{+}s^T )$ is ppsd. 
	Since 
	$$A_{\overline\beta(sA_{\overline{\beta}}^{+}s^T)}=
	\left(\begin{array}{cc}
		A_{\widetilde\beta(sA_{\overline{\beta}}^{+}s^T)} & r_2^T\\
		r_2 & \beta_{\beta_{2k}}
	\end{array}\right),$$
	where 
	$r_2=
		\left(\begin{array}{ccc}
			\beta_{k+1} & \cdots & \beta_{2k-1}
		\end{array}\right)$, is singular, the assumption 
		$uA_{\widetilde \beta}^+u^T<sA_{\overline\beta}^+s^T$ and Claim 3
		imply that $A_{\widetilde\beta(sA_{\overline{\beta}}^{+}s^T)}\succ 0$,
		hence
		$\Rank A_{\widetilde\beta(sA_{\overline{\beta}}^{+}s^T)}=
		\Rank A_{\overline\beta(sA_{\overline{\beta}}^{+}s^T)}=\Rank A_{\overline\beta}.$ 
		Hence, \eqref{cond2-proof-2306} of Claim 1 for 
		$y_0=sA_{\overline{\beta}}^{+}s^T$ holds, which
		proves the implication $\eqref{pt3-v2306}\Rightarrow\eqref{pt1-v2306}$
		in this case.		
  \item Case \eqref{case3-2306}: 
	By assumption $\Rank A_{\widetilde \beta}=\Rank A_{\overline{\beta}}$,
	it follows that the last column of $A_{\overline{\beta}}$
	is in the span of the others.
	There exists $x_0\in \RR$ such that $A_{\beta(x_0,y_+)}$ 
	is psd and by Lemma \ref{extension-principle}, 
	the last column of $A_{\overline \beta(y_+)}$ is in the span of the others and hence
	$\Rank A_{\widetilde \beta(y_+)}=\Rank A_{\overline \beta(y_+)}$.
	Since $A_{\overline \beta(y_+)}$ is singular, using Corollary \ref{rank-theorem-3} with $\beta$ equal to $\beta(x_0,y_+)$, 
	we get $\Rank A_{\overline \beta(y_+)}=\Rank A_{\overline \beta}$, which in particular implies that $y_+=sA_{\overline \beta}^+s^T$.
	Hence, $\Rank A_{\widetilde \beta(y_+)}=\Rank A_{\overline \beta(y_+)}=\Rank A_{\overline \beta}$, which
	is \eqref{cond2-proof-2306} of Claim 1.
	This proves the implication $\eqref{pt3-v2306}\Rightarrow\eqref{pt1-v2306}$
	in this case.
\end{itemize}

It remains to prove the implication $\eqref{pt1-v2306}\Rightarrow\eqref{pt2-v2306}$. 
By Theorem \ref{trunc-Hamb-without-1}, if $\beta(x,y_0)$ has a representing measure, then
there is a $(\Rank \overline \beta(y_0))$ or  $(\Rank \overline \beta(y_0)+1)$-atomic representing measure.
By Corollary \ref{rank-theorem-3}, $\Rank \overline \beta(y_0)=\Rank A_{\overline \beta(y_0)}=\Rank A_{\overline\beta}$ if 
$A_{\overline \beta(y_0)}$ is singular
and $\Rank \overline \beta(y_0)=\Rank A_{\overline \beta}+1=\Rank \overline\beta+1$ otherwise.

For the moreover part, note from the previous paragraph that $(\Rank \overline \beta)$-atomic measure exists if and only if 
$A_{\overline \beta(y_0)}=\Rank A_{\overline \beta}$ for some $y_0$ such the $\beta(x,y_0)$ admits a measure. 
The only $y_0\in \RR$ satisfying $\Rank A_{\overline \beta(y_0)}=\Rank A_{\overline \beta}$ is $sA_{\overline{\beta}}^{+}s^T$
and hence a $(\Rank \overline \beta)$-atomic measure exists if and only if $\beta(x,sA_{\overline{\beta}}^{+}s^T)$ admits a measure. 
From the proof of the implication 
$\eqref{pt3-v2306}\Rightarrow\eqref{pt1-v2306}$ we see that this is true 
in the cases \eqref{subcase2-2606} and \eqref{case3-2306}.
Finally, if  \eqref{subcase1-2606} holds, then we see that:
\begin{itemize} 
	\item If $\widetilde A\succ 0$, then we must have 
			$y_-\leq sA_{\overline{\beta}}^{+}s^T$
		and 
			$uA_{\widetilde \beta}^+u^T<sA_{\overline\beta}^+s^T$ (see the
			proof of  \eqref{subcase2-2606}), which means that \eqref{subcase2-2606} holds.
	\item If $\widetilde A$ is singular,
		then $sA_{\overline{\beta}}^{+}s^T<y_-=y_+$ and 
		$\beta(x,sA_{\overline{\beta}}^{+}s^T)$ does not admit a 
		$(\Rank \overline \beta)$-atomic measure.
\end{itemize}
This establishes the proof of the moreover part.
\end{proof}

\begin{remark}
	For $k=2$, the THMP with gaps $(\beta_1,\beta_2)$ coincides with the THMP with gaps 
	$(\beta_{2k-2},\beta_{2k-1})$
	and hence the case $k=2$ is already covered by  Theorem \ref{trunc-Hamb-without-3n-2-and-1} .
\end{remark}

The following corollary is a consequence of Theorem \ref{trunc-Hamb-without-1-2}  and solves the bivariate TMP for the curve $y^3=x^4$ where also $\beta_{\frac{5}{3},0}$ is given. Here $\beta_{\frac{5}{3},0}$ stands for the integral
of $x^{\frac{5}{3}}$ w.r.t.\ $\mu$, i.e., $\int_K x^{\frac{5}{3}}d\mu$. 

\begin{corollary}\label{Y3=X4-gen} 
	Let $\displaystyle\beta=(\beta_{i,j})_{i,j\in \ZZ^2_+,i+j\leq 2k}$ $\beta$ be a 2-dimensional real 	
	multisequence of degree $2k$ and let $\beta_{\frac{5}{3},0}$ be also given. 
	Suppose $M(k)$ is positive semidefinite and recursively generated.
	Let 
	$$u^{(1)}=(\beta_{0,1},\beta_{\frac{5}{3},0},\beta_{2,0},\beta_{1,2}),\quad 
	u^{(i)}=(\beta_{0,i},\beta_{3,i-2},\beta_{2,i-1},\beta_{1,i})\quad \text{for}\quad i=2,\ldots,2k-1,$$ 
	$$\widetilde{\beta}:=(u^{(1)},\ldots,u^{(2k-2)},\beta_{0,2k-1},\beta_{3,2k-3},\beta_{2,2k-2}),\quad
		\overline\beta :=(\widetilde{\beta},\beta_{1,2k-1},\beta_{0,2k}),$$
	$$
			\breve\beta:=(\widehat{\beta},\beta_{3,2k-3},\beta_{2,2k-2})
	\quad\text{and}\quad
				\overline{\overline{\beta}}:=(\breve\beta,\beta_{3,2k-1},\beta_{0,2k})
	$$
	be subsequences of $\beta$,
				$$v:=\left(\begin{array}{cccccccc}
					\beta_{1,0} & u^{(1)} & \cdots & u^{(k-2)} & \beta_{0,k-1} & 
					\beta_{3,k-3}&\beta_{2,k-2} & \beta_{1,k-1}\end{array}\right),\quad
				u:=\left(\begin{array}{cc}v & \beta_{0,k}\end{array}\right),$$
				$$s:=\left(\begin{array}{cc} u & \beta_{3,k-2}\end{array}\right),\quad
				w:=\left(\begin{array}{ccccccccc}
					\beta_{\frac{5}{3},0} & \beta_{2,0} &\beta_{1,1} & u^{(2)} & \cdots &
							u^{(k-1)} & \beta_{0,k} & \beta_{3,k-2} & \beta_{2,k-1}\end{array}\right)$$
	vectors and
		$$\overline{A}:=\left(\begin{array}{cc} \beta_0 & v \\ v^T & A_{\breve \beta}\end{array}\right)
			\quad\text{and}\quad
			\widetilde{A}:=
			\left(\begin{array}{cc} \beta_0 & u \\ u^T & A_{\overline{\overline{ \beta}}}\end{array}		
			\right)$$
	matrices.
	Then $\beta$ has a representing measure supported on $y^3=x^4$ if and only if 
	\begin{equation}\label{cond-to-solve-v270720}
		 	sA_{\overline{\beta}}^{+}s^T 
			\leq u A_{\overline{\overline\beta}}^+w^T+\sqrt{(A_{\overline{\beta}}/A_{\overline{\overline{\beta}}}) (\widetilde A/A_{\overline{\overline \beta}})}
	\end{equation}
	one of the following statements hold: 
	\begin{enumerate}
	\item One of the following holds: 
			\begin{itemize}
				\item If $k\geq 4$, then $Y^3=X^4$ is a column relation of $M(k)$. 
				\item If $k=3$, then the equalities 
					$\beta_{0,3}=\beta_{4,0}$, $\beta_{1,3}=\beta_{5,0}$, $\beta_{2,3}=\beta_{6,0},$ 
					$\beta_{0,4}=\beta_{4,1}$, $\beta_{0,5}=\beta_{4,2}$.
				\item If $k=2$, then the equality 
					$\beta_{0,3}=\beta_{4,0}$ holds.
				\item $k=1$.
			\end{itemize}
	\item One of the following holds: 
		\begin{enumerate}
			\item\label{cond2-09-07-20} $A_{\overline{\beta}}\succ 0$, $\overline A\succ 0$ and the inequality in 
				\eqref{cond-to-solve-v270720} is strict.
			\item\label{subcase2-170720} $A_{\overline{\beta}}\succ 0$ and the following inequalities holds:
				$$uA_{\widetilde{\beta}}^{+}u^T < sA_{\overline{\beta}}^{+}s^T
					\quad\text{and}\quad 
					u A_{\overline{\overline\beta}}^+w^T-
					\sqrt{(A_{\overline{\beta}}/A_{\overline{\overline{\beta}}}) 
					(\widetilde A/A_{\overline{\overline \beta}})}
					\leq sA_{\overline{\beta}}^{+}s^T.$$
			\item\label{pt3-170720} $A_{\overline\beta}\succeq 0$ and 
				$\Rank A_{\widetilde \beta}=\Rank A_{\overline \beta}=
					\Rank \left(\begin{array}{cc} s^T & A_{\overline \beta}\end{array}\right)$.
			\end{enumerate}
	\end{enumerate}
Moreover, if the representing measure exists, then there exists a $(\Rank \overline\beta)$-atomic measure
if and only if \eqref{subcase2-170720} or \eqref{pt3-170720} holds. 
	Otherwise there is a $(\Rank \overline\beta+1)$-atomic measure
\end{corollary}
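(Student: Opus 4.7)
The plan is to reduce Corollary \ref{Y3=X4-gen} to the univariate Theorem \ref{trunc-Hamb-without-1-2}, exactly as Corollary \ref{Y=X4-gen} was reduced to Theorem \ref{trunc-Hamb-without-3n-2-and-1}. A point on the curve $y^3=x^4$ is parametrized as $(t^3,t^4)$, so a monomial $x^i y^j$ at such a point equals $t^{3i+4j}$ and the additional moment $\beta_{\frac{5}{3},0}$ corresponds to $t^{3\cdot 5/3}=t^5$. I would therefore define a univariate sequence $\widetilde\beta=(\widetilde\beta_0,\widetilde\beta_1,\ldots,\widetilde\beta_{8k})$ of degree $8k$ by
\[
\widetilde\beta_m := \beta_{i(m),j(m)} \quad \text{whenever } 3i(m)+4j(m)=m \text{ with } i(m)+j(m)\leq 2k,
\]
and $\widetilde\beta_5:=\beta_{\frac{5}{3},0}$, leaving $\widetilde\beta_1,\widetilde\beta_2$ as the two gaps.

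The first task (analogue of Claim 1 in Corollary \ref{Y=X4-gen}) is to verify that $\widetilde\beta_m$ is well-defined for each $m\in\{0,3,4,5,6,\ldots,8k\}\setminus\{1,2\}$. Existence of at least one admissible representation $m=3i+4j$ with $i+j\leq 2k$ is a Frobenius-semigroup check (the numerical semigroup generated by $3$ and $4$ has Frobenius number $5$, hence contains every $m\geq 3$ except $5$, which is covered by $\beta_{\frac{5}{3},0}$), together with a bound $3i+4j\leq 8k$ splitting into residue classes $m\Mod 4$, as in the proof of Corollary \ref{Y=X4-gen}. Independence of the choice follows from the fact that any two solutions of $3i+4j=m$ differ by $(i,j)\mapsto (i+4,j-3)$, and the column relation $Y^3=X^4$ of $M(k)$ together with recursive generation gives $\beta_{i+4,j}=\beta_{i,j+3}$ for all admissible indices, so all representations yield the same value.

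The second task (Claim 2) is the measure correspondence: atoms $(t_1^3,t_1^4),\ldots,(t_r^3,t_r^4)$ with densities $\lambda_1,\ldots,\lambda_r$ form a representing measure for $\beta$ and $\beta_{\frac{5}{3},0}$ supported on $y^3=x^4$ if and only if $t_1,\ldots,t_r$ with the same densities form a real representing measure for the one-dimensional sequence $\widetilde\beta(x,y)=(\widetilde\beta_0,x,y,\widetilde\beta_3,\widetilde\beta_4,\widetilde\beta_5,\ldots,\widetilde\beta_{8k})$ (viewed with gaps at positions $1$ and $2$). The ``if'' direction is the calculation $\sum_{\ell}\lambda_\ell t_\ell^m=\sum_\ell \lambda_\ell (t_\ell^3)^{i(m)}(t_\ell^4)^{j(m)}$ combined with the special case $\sum_\ell \lambda_\ell t_\ell^5=\sum_\ell \lambda_\ell (t_\ell^3)^{5/3}$ for $\beta_{\frac{5}{3},0}$. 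The ``only if'' direction reduces $\beta_{i,j}$ using the rg-hypothesis on $M(k)$ via $\beta_{i,j}=\beta_{i-4,j+3}=\cdots=\beta_{i\Mod 4,\,j+3\lfloor i/4\rfloor}$, as in Corollary \ref{Y=X4-gen}.

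With Claims 1 and 2 in place, Theorem \ref{trunc-Hamb-without-1-2} applied to $\widetilde\beta$ (whose Thm.-$k$ equals $4k$, so $k_{\text{Thm}}>2$ already when the Corollary's $k\geq 1$) transcribes directly: the defining matrices $\overline\beta,\widetilde\beta,\breve\beta,\overline{\overline\beta}$ and vectors $v,u,s,w$ of Theorem \ref{trunc-Hamb-without-1-2} unwind to the bivariate vectors and matrices in the Corollary, condition \eqref{cond-to-solve-v2306} becomes \eqref{cond-to-solve-v270720}, cases \eqref{subcase1-2606}, \eqref{subcase2-2606}, \eqref{case3-2306} become \eqref{cond2-09-07-20}, \eqref{subcase2-170720}, \eqref{pt3-170720}, and the atom-count part of Theorem \ref{trunc-Hamb-without-1-2} gives the Moreover part. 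Finally, the Bayer--Teichmann theorem \cite{BT06} is invoked to upgrade existence of a $K$-representing measure to existence of a finitely atomic one so that the univariate-bivariate dictionary of Claim 2 applies. I expect the only delicate step to be the bookkeeping in Claim 1: ensuring that every required index pair $(i(m),j(m))$ satisfies $i(m)+j(m)\leq 2k$ across all residues modulo $4$ (with a separate verification for the isolated value $m=5$), and that the column relation $\beta_{i+4,j}=\beta_{i,j+3}$ is indeed applicable throughout the range, including boundary cases where $i+j$ is close to $2k$ and the low-degree moment $\beta_{\frac{5}{3},0}$ must be inserted consistently.
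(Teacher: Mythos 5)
Your proposal is correct and matches the paper's own proof in all essentials: the paper likewise defines $\widetilde\beta_m:=\beta_{i,j}$ with $3i+4j=m$ (choosing the canonical representative with $i=m\Mod{4}$), sets $\widetilde\beta_5:=\beta_{\frac{5}{3},0}$, verifies $i+j\leq 2k$ by residue classes mod $4$, proves the atom correspondence using the rg-relation $\beta_{i,j}=\beta_{i-4,j+3}$, and then invokes Bayer--Teichmann together with Theorem \ref{trunc-Hamb-without-1-2}. The only cosmetic difference is that you define $\widetilde\beta_m$ via an arbitrary admissible representation and argue independence of the choice, whereas the paper fixes a canonical one and absorbs that argument into the ``only if'' computation of Claim 2.
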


\begin{proof}
	For $\{0,3,4,6,\ldots,8k\}$ we define the numbers $\widetilde \beta_m$ by the following rule
		\begin{equation*}
			\widetilde \beta_m:=
			\left\{\begin{array}{rl}
				\beta_{0,\frac{m}{4}},& \text{if }m\Mod{4}=0,\\
				\beta_{3,\lfloor\frac{m}{4}\rfloor-2},& \text{if }m\Mod{4}=1,\\
				\beta_{2,\lfloor\frac{m}{4}\rfloor-1},& \text{if }m\Mod{4}=2,\\
				\beta_{1,\lfloor\frac{m}{4}\rfloor},& \text{if }m\Mod{4}=3.
			\end{array}\right.
		\end{equation*}
	
	\noindent \textbf{Claim 1.} Every number $\widetilde \beta_m$ is well-defined.\\

	We have to prove that $i+j\leq 2k$, where $i,j$ are indices of $\beta_{i,j}$ used in the definition of 
	$\widetilde \beta_m$.
	We separate four cases according to $m$:
	\begin{itemize}
		\item $m\Mod{4}=0$: $\frac{m}{4}\leq 2k$.
		\item $m\Mod{4}=1$: $\lfloor\frac{m}{4}\rfloor-2+3\leq (2k-1)+1=2k.$
		\item $m\Mod{4}=2$: $\lfloor\frac{m}{4}\rfloor-1+2\leq (2k-1)+1=2k.$
		\item $m\Mod{4}=3$: $\lfloor\frac{m}{4}\rfloor+1\leq (2k-1)+1=2k.$\\
	\end{itemize}
	
	We also define
		$\widetilde \beta_5:=\beta_{\frac{5}{3},0}.$\\

	\noindent \textbf{Claim 2.} 
		Let $t\in \NN$. 
		The atoms $(x_1^3,x_1^4),\ldots (x_t^3,x_t^4)$ with densities $\lambda_1,\ldots,\lambda_t$
		are the $(y^3-x^4)$-representing measure for $\beta$ with $\beta_{\frac{5}{3},0}$ known
		if and only if
		the atoms $x_1,\ldots,x_t$ with densities $\lambda_1,\ldots,\lambda_t$
		are the $\RR$-representing measure for 
		 $\widetilde \beta(x,y)=(\widetilde \beta_0,x,y,\widetilde \beta_3,\ldots,\widetilde\beta_{2k})$.\\

	The if part follows from the following calculation:
		$$\widetilde \beta_{m}
			=\left\{\begin{array}{rl}
				\beta_{0,\frac{m}{4}},& \text{if }m\Mod{4}=0,\\
				\beta_{3,\lfloor\frac{m}{4}\rfloor-2},& \text{if }m\Mod{4}=1,\\
				\beta_{2,\lfloor\frac{m}{4}\rfloor-1},& \text{if }m\Mod{4}=2,\\
				\beta_{1,\lfloor\frac{m}{4}\rfloor},& \text{if }m\Mod{4}=3,\\
				\end{array}\right.
			=\left\{\begin{array}{rl}
				\sum_{\ell=1}^t \lambda_\ell (x_{\ell}^4)^{\frac{m}{4}},& \text{if }m\Mod{4}=0,\\
				\sum_{\ell=1}^t \lambda_\ell (x_{\ell}^3)^{3}(x_{\ell}^4)^{\lfloor\frac{m}{4}\rfloor-2},& 
					\text{if }m\Mod{4}=1,\\
				\sum_{\ell=1}^t \lambda_\ell (x_{\ell}^3)^{2}(x_{\ell}^4)^{\lfloor\frac{m}{4}\rfloor-1},& 
					\text{if }m\Mod{4}=2,\\
				\sum_{\ell=1}^t \lambda_\ell x_{\ell}^3(x_{\ell}^4)^{\lfloor\frac{m}{4}\rfloor},& 
					\text{if }m\Mod{4}=3,\\
				\end{array}\right.
			=\sum_{\ell=1}^t \lambda_\ell x_\ell^{m},$$
	where $m=0,3,4,6,\ldots,8k$ and 
		$$\widetilde \beta_5=\beta_{\frac{5}{3},0}=
				\sum_{\ell=1}^t \lambda_\ell (x_{\ell}^3)^{\frac{5}{3}}=
				\sum_{\ell=1}^t \lambda_\ell x_\ell^{5}.$$

	The only if part follows from the following calculation:
	\begin{align*}
	  \beta_{i,j}
		&= \beta_{i-4,j+3}=\cdots=\beta_{i\Mod{4},j+3\lfloor\frac{i}{4}\rfloor}
		  =\widetilde \beta_{3(i\Mod{4})+4(j+3\lfloor\frac{i}{4}\rfloor)}\\
		&= \sum_{\ell=1}^t \lambda_\ell x_\ell^{3(i\Mod{4})+4(j+3\lfloor\frac{i}{4}\rfloor)}
		  =\sum_{\ell=1}^t \lambda_\ell x_\ell^{3(i\Mod{4}+4\lfloor\frac{i}{4}\rfloor)} x_{\ell}^{4j}
		  =\sum_{\ell=1}^t \lambda_\ell (x_\ell^3)^{i}(x_{\ell}^4)^{j},
	\end{align*}
	where the first three equalities in the first line follow by $M(k)$ being rg and
		$$\beta_{\frac{5}{3},0}=\widetilde \beta_5
			=\sum_{\ell=1}^t \lambda_\ell x_\ell^{5}
			=\sum_{\ell=1}^t \lambda_\ell (x_{\ell}^3)^{\frac{5}{3}}.$$

	Using Claim 2 and a theorem of Bayer and Teichmann \cite{BT06}, implying that if a finite sequence 
	has a $K$-representing measure, then it has a finitely atomic $K$-representing measure, the statement of the 	
	Corollary follows by Theorem \ref{trunc-Hamb-without-1-2}.
\end{proof}

\end{document}